\newtheorem{theorem}{Theorem}[section]
\newtheorem{lemma}[theorem]{Lemma}
\newtheorem{proposition}[theorem]{Proposition}
\newtheorem{definition}[theorem]{Definition}
\newtheorem{example}[theorem]{Example}
\newtheorem{remark}[theorem]{Remark}
\newtheorem{convention}[theorem]{Convention}
\newtheorem{notation}[theorem]{Notation}
\newtheorem{procdur}[theorem]{Procedure}
\newenvironment{customthm}[1]
{\innercustomthm}
{\endinnercustomthm}
\def\v@rt#1#2{\m@th\ooalign{$\hfil#1|\hfil$\crcr$#1#2$}}
\def\captr{\mathrel{\mathpalette\v@rt\cap}}
\newcommand{\Rmnum}[1]{\expandafter\@slowromancap\romannumeral #1@}
\newcommand\blfootnote[1]{
	\begingroup
	\renewcommand\thefootnote{}\footnote{#1}
	\addtocounter{footnote}{-1}
	\endgroup
}
\def \Refl {T}
\def \RRefl {\boldsymbol{T}}
\def \refl {t}
\def \rrefl {\boldsymbol{t}}
\def \cS {{\mathcal S}}
\def \GG {{\boldsymbol G}}
\def \SS {{\boldsymbol{\mathcal{S}}}}
\def \ss {{\boldsymbol s}}
\def\bs#1{\boldsymbol{#1}}
\def\braid#1#2{{\hbox{\sc b}}(#1, #2)}
\def \cc#1#2#3#4{{\hbox{\sc cc}}(#1,#2,#3,#4)}
\def \tc#1#2#3#4{{\hbox{\sc tc}}(#1,#2,#3,#4)}
\begin{document}

\title{Interval groups related to finite Coxeter groups I}
\author{Barbara Baumeister, Georges Neaime, and Sarah Rees}
\date{}
	
\maketitle

\begin{abstract}
We derive presentations of the interval groups related to all quasi-Coxeter elements in the Coxeter group of type $D_n$. 
Type $D_n$ is the only infinite family of finite Coxeter groups that admits proper quasi-Coxeter elements. The presentations we obtain are over a set of generators in bijection with what we call a Carter generating set, and the relations are those defined by the related Carter diagram together with a twisted or a cycle commutator relator, depending on whether the quasi-Coxeter element is a Coxeter element or not. The proof is based on the description of two combinatorial techniques related to the intervals of quasi-Coxeter elements.

In a subsequent work \cite{BaumNeaReesPart2}, we complete our analysis to cover all the exceptional cases of finite Coxeter groups, and establish that almost all the interval groups related to proper quasi-Coxeter elements are not isomorphic to the related Artin groups, hence establishing a new family of interval groups with nice presentations. Alongside the proof of the main results, we establish important properties related to the dual approach to Coxeter and Artin groups.
\end{abstract}

\blfootnote{Date: \today}
\blfootnote{2010 MSC:  20F55, 20F36}
\blfootnote{Keywords and phrases: Coxeter groups, Quasi-Coxeter elements, Carter diagrams, Artin(--Tits) groups, dual approach to Coxeter and Artin groups, generalised non-crossing partitions, Garside structures, Interval (Garside) structures.}

\begin{small}
	\tableofcontents 
\end{small}

\section{Introduction}\label{SectionIntro}

The philosophy of interval Garside theory is that starting from suitable intervals in a given group, we construct an interval Garside monoid and group, along with a complex whose fundamental group is the interval Garside group, such that the divisibility relations of the interval provide relevant information about the interval Garside group. Part of the information we obtain are efficient solutions to the word and conjugacy problems, as well as important group-theoretical properties \cite{DehornoyEtAl}. Interval Garside groups also enjoy important homological, and homotopical properties~\cite{CharneyMeierWhittlesey}.

Garside theory is relevant in the context of Coxeter and Artin groups. Actually, Garside structures first arose out of observations of properties of Artin's braid group that were made in Garside's Oxford thesis \cite{GarsideThesis} and his article \cite{Garside}. It was then realised that Garside's approach extend to all Artin groups of spherical type, independently by Brieskorn--Saito and Deligne in two adjacent articles in the Inventiones \cite{BrieskornSaito} and \cite{Deligne}. This approach is called the standard approach to Coxeter and Artin groups.

The dual approach consists of analysing the Coxeter group as a group generated by all its reflections. Spherical Artin groups are constructed from intervals called the generalised non-crossing partitions. This approach was established by Bessis in \cite{BessisDualMonoid}. These intervals consist of elements lying below the so-called Coxeter elements that play a prominent role within the dual approach. The Coxeter elements are all conjugate to one another and some of them can be found by taking the product of the elements in the standard generating set in any order.

Coxeter elements are of maximal length over the set of reflections, but they do not exhaust all the elements of maximal length. Quasi-Coxeter elements \cite{BaumGobet} are of maximal length such that the reflections in a certain reduced decomposition generate the Coxeter group. Among them are Coxeter elements. We call a proper quasi-Coxeter element a quasi-Coxeter element that is not a Coxeter element. Amongst the infinite families of finite Coxeter groups, proper quasi-Coxeter elements exist only in type $D_n$. Carter \cite{Carter} classified the conjugacy classes in Weyl groups. Among them are the conjugacy classes of quasi-Coxeter elements. He also defined diagrams related to these classes that we call Carter diagrams. Cameron-Seidel-Tsaranov \cite{Cameron} defined presentations of Weyl groups defined on Carter diagrams by adding cycle commutator relators.

We establish presentations of the interval groups related to all quasi-Coxeter elements. Our presentations are compatible with the analysis of Carter \cite{Carter}. Actually, they are always nicely defined on Carter diagrams by adding either cycle commutator relators or twisted cycle commutator relators depending whether the quasi-Coxeter element is a Coxeter element or not. Twisted cycle and cycle commutator relators can be written as relations between positive words. For Coxeter elements, where the interval group is the Artin group, some of our group presentations also arise from cluster algebras (see \cite{BarotMarsh, GrantMarsh} and also \cite{Hayley}). For almost all the other proper quasi-Coxeter elements, we can establish that the interval group related to each of them is not isomorphic to the corresponding Artin group. Although we obtain nice presentations of these groups, the intervals of proper quasi-Coxeter elements are not lattices in almost all the cases, hence not giving rise to Garside structures. This classifies the interval Garside structures one obtains for quasi-Coxeter elements within the dual approach. Along with the description of the presentations of interval groups, we describe important properties for quasi-Coxeter elements, their divisors, and their lifts to the interval groups. 

We divide our analysis into two parts. This paper is the first part of the series. It concerns the only infinite family of Coxeter groups (that of type $D_n$), where proper quasi-Coxeter elements exist. This family needs a special treatment. The second part deals with the exceptional cases and establishes the non-isomorphism results.

The main theorem of this paper is actually the following. We refer to Sections~\ref{SectionDefPrel} and \ref{SecDualCox} for the definitions of a quasi-Coxeter element, its associated Carter diagram $\Delta$, and the group $A(\Delta)$.

\begin{customthm}{A}\label{ThmPres}
	Let $w$ be a quasi-Coxeter element of the Coxeter group $W$ of type $D_n$ and $\Delta$ its associated Carter diagram, as shown in Figure~\ref{FigureCarterDiagramDn} of 
	Section~\ref{SubsectionQuasiCoxeter}. Then the interval group $G([1,w])$ admits a presentation over the generators $x_1,\dotsc,x_n$ corresponding to the vertices of $\Delta$ together with the relations described by $\Delta$ and the twisted cycle commutator relator 
	$\tc{x_{i}}{x_{j}}{x_{k}}{x_{l}}$, associated with the $4$-cycle $(x_i,x_j,x_k,x_l)$ \mbox{within $\Delta$,}
	that is, \[ G([1,w]) \cong A(\Delta)/\langle \langle
	[x_{i},x_{j}^{-1}x_{k}x_{l}x_{k}^{-1}x_{j}]\rangle \rangle. \]
\end{customthm}

We shall reformulate Theorem~\ref{ThmPres} as Theorem~\ref{ThmPresDn1}, and prove that theorem in Section~\ref{SecProofPres}.

Our use of the word `twisted' comes from the fact that when following the cycle $(x_i,x_j,x_k,x_l)$ in the cycle commutator relator, we invert the element $x_j$. We call the set $\{x_1, \dotsc, x_n\}$ of generators that appears in Theorem~\ref{ThmPres} a Carter generating set.

The case where $w$ is a Coxeter element is a particular case of Theorem~\ref{ThmPres}, where there is no $4$-cycle. Therefore, in this case, we get a new proof of the fact that $G([1,w])$ is the Artin group of type $D_n$, that was covered before in \cite{BessisDualMonoid}.

Within the proof of Theorem~\ref{ThmPres}, we describe an important combinatorial technique that derives reduced expressions over the set of reflections for the divisors of length $n-1$ of
quasi-Coxeter elements. This
reveals important information on the poset of quasi-Coxeter elements,
on parabolic subgroups, and enables us to establish nice presentations
of the interval groups in accordance with Carter diagrams. The
algorithms we define use the description of the elements in the
Coxeter group of type $D_n$ as monomial matrices. This is relevant to
the dual approach for complex reflection groups. We suspect that our
algorithms generalise to the context of the infinite families of
complex reflection groups.

This first paper is structured as follows.
After some preparations and after introducing the notation, Section~\ref{SectionDefPrel} contains our strategy for the proof of Theorem~\ref{ThmPres}. The section also contains a good summary of our results (see Section~\ref{SubsectionStrategyProof}). In Section~\ref{SecDualCox}, we recall the dual approach to the Coxeter group of type $D_n$. Next, we describe the combinatorial technique that defines reduced decompositions  and introduce diagrams for these decompositions in Section \ref{SecMaxDivisors}. Within our proof, parabolic subgroups play an important role. In Section~\ref{SecDecompReflections}, we decompose the reflections over the Carter generating set and define the lift of these decompositions to the interval groups. Finally, Section~\ref{SecProofPres} finishes our proof by induction.

\textbf{Acknowledgements}. The second author would like to thank the DFG (project BA2200/5-1). The three authors would like to thank Bielefeld University which hosted the visit of the third author, and to thank the MFO in Oberworfach that hosted them for two weeks in September 2020 in a Research in Pairs meeting. The authors would also like to thank Derek Holt who helped them to optimise their use of \verb|kbmag| \cite{KBMAG} within \verb|GAP| \cite{GAP}.

\section{Definitions and Preliminaries}\label{SectionDefPrel}

\subsection{Coxeter groups and Artin groups}\label{SubsectionCoxeterArtinGrps}

\begin{definition}\label{DefinitionCoxeterGroups}
	
	Suppose that $W$ is a group and $S$ is a subset of $W$. For $s$ and $t$ in $S$, let $m_{st}$ be the order of $st$ if this order is finite, and be $\infty$ otherwise. We say that $(W,S)$ is a Coxeter system, and that $W$ is a Coxeter group with Coxeter system $S$, if $W$ admits the presentation with generating set $S$ together with the quadratic relations: $s^2=1$ for all $s \in S$, and the braid relations: $\underset{m_{st}}{\underbrace{sts\dotsc}}=\underset{m_{st}}{\underbrace{tst\dotsc}}$  for $s,t \in S$, $s \neq t$ and $m_{st} \neq \infty$. We define an element $t$ of $W$ to be a reflection if it is a conjugate of an element of $S$.
\end{definition}

We define the Artin group $A(W)$ associated with a Coxeter system $(W,S)$ as follows.

\begin{definition}\label{DefinitionArtinTits}
	The Artin group $A(W)$ associated with a Coxeter system $(W,S)$ is defined by a presentation with generating set $\SS$ in bijection with $S$ and the braid relations: $\underset{m_{st}}{\underbrace{\boldsymbol{s}\boldsymbol{t}\boldsymbol{s}\dotsc}}=\underset{m_{st}}{\underbrace{\boldsymbol{t}\boldsymbol{s}\boldsymbol{t}\dotsc}}$ for $\boldsymbol{s}, \boldsymbol{t} \in \SS$ and $\boldsymbol{s} \neq \boldsymbol{t}$, where $m_{st} \in \mathbb{Z}_{\geq 2}$ is the order of $st$ in $W$.
\end{definition}

These presentations are often represented
graphically using a Coxeter diagram $\Gamma$. This is a graph with vertex 
set $S$, in which the edge $\{ s,t\}$ exists if $m_{st} \geq 3$, and is labelled with $m_{st}$ when $m_{st} \geq 4$.
Let $\Gamma$ be such a diagram. 
We denote by $W(\Gamma)$ and $A(\Gamma)$ the related Coxeter and Artin groups $W$ and $A(W)$. The finite Coxeter groups are precisely the real reflection groups, and the spherical Artin groups are the Artin groups related to the finite Coxeter groups. 
The corresponding Coxeter diagrams are the three infinite families of types $A$, $B$, and $D$, and the exceptional cases of types $E_6$, $E_7$, $E_8$, $F_4$, $H_3$, $H_4$, and $I_2(e)$. In the remainder of the article, $W$ will  always be a finite Coxeter group.

Recall that the Coxeter group of type $A_n$ ($n \geq 1$) is the symmetric group \mbox{Sym$(n+1)$} and the related Artin group is the 
usual braid group $$\mathcal{B}_{n+1}  = \langle \ss_1, \ldots , \ss_n~|~ \boldsymbol{s}_{i}\boldsymbol{s}_{i+1}\boldsymbol{s}_i = \boldsymbol{s}_{i+1}\boldsymbol{s}_i\boldsymbol{s}_{i+1}~\mbox{for}~1 \leq i \leq n-1~\mbox{and}$$
$$\boldsymbol{s}_i\boldsymbol{s}_j=\boldsymbol{s}_j\boldsymbol{s}_i~\mbox{for}~|i-j| > 1 \rangle.$$

%
%
%
%

\subsection{Quasi-Coxeter elements}\label{SubsectionQCox} 

Let $(W,S)$ be a finite Coxeter system, and let $\Refl := \cup_{w \in W} S^w$ be the set of all its reflections. As each $w \in W$ is a product of reflections in $\Refl$, we can define 
$$
\ell_\Refl(w):= \min \{ k \in \mathbb{Z}_{\geq 0} \mid w=\refl_{1}\refl_2 \dotsc \refl_{k};\ \refl_i \in \Refl \},
$$
the reflection length of $w$. If $w = \refl_1\refl_2 \dotsc \refl_k$ with $\refl_i \in \Refl$ and $k=\ell_\Refl (w)$, we call $(\refl_1,\refl_2, \dotsc, \refl_k)$ (or $\refl_1\refl_2 \dotsc \refl_k$ by abuse of notation) a reduced decomposition of $w$.

Now we define the notion of quasi-Coxeter elements.

\begin{definition}\label{DefQuasiCoxInGen}
	An element $w$ of a finite Coxeter group $W$ is called a quasi-Coxeter element if there exists a reduced decomposition
	$\refl_1\refl_2 \dotsc \refl_n$ of $w$ where $n$ is the cardinality of $S$ such that $\langle \refl_1,\refl_2, \dotsc, \refl_n \rangle = W$.
\end{definition}

A Coxeter element is a conjugate of any element that is written as the product of the simple generators of $W$ in any order. Note that every Coxeter element is a quasi-Coxeter element. A quasi-Coxeter element is called proper if it is not a Coxeter element.

It is shown in \cite{BW} that the quasi-Coxeter elements in simply laced Coxeter groups are precisely those elements that admit a reduced decomposition into reflections such that the roots related to these reflections form a basis of the related root lattice. In the non-simply laced case, it is also required that the system of coroots generates the coroot lattice.

Recall that a parabolic subgroup of $W$ is a subgroup generated by a conjugate of a subset of $S$. Note that a more general definition of parabolic subgroups, which is in fact equivalent to our definition for finite Coxeter systems, is used in \cite{BDSW, BaumGobet}. We call an element in $W$ a parabolic quasi-Coxeter element if it is a quasi-Coxeter element in a parabolic subgroup of $W$.

Since the set $\Refl$ of reflections is closed under conjugation, there is a natural way to obtain new reflection decompositions from a given one. The braid group $\mathcal{B}_n$ acts on the set $\Refl^n$ of $n$-tuples of reflections via
\begin{align*}
	\boldsymbol{s}_i (\refl_1 ,\dotsc , \refl_n ) &= (\refl_1 ,\dotsc , \refl_{i-1} , \hspace*{5pt} \refl_i \refl_{i+1} \refl_i,
	\hspace*{5pt} \phantom{\refl_{i+1}}\refl_i\phantom{\refl_{i+1}}, \hspace*{5pt} \refl_{i+2} ,
	\dotsc , \refl_n), \\
	\boldsymbol{s}_i^{-1} (\refl_1 ,\dotsc , \refl_n ) &= (\refl_1 ,\dotsc , \refl_{i-1} , \hspace*{5pt} \phantom
	{\refl_i}\refl_{i+1}\phantom{\refl_i}, \hspace*{5pt} \refl_{i+1}\refl_i\refl_{i+1}, \hspace*{5pt} \refl_{i+2} ,
	\dotsc , \refl_n),\quad i=1,\ldots,n-1,
\end{align*}
\noindent the so-called Hurwitz action of $\mathcal{B}_n$ on $\Refl^n$. It is readily observed that this action restricts to the set of all reduced reflection decompositions of a given element $w \in W$. If the latter action is transitive, then we say that the dual Matsumoto property holds for $w$.

The dual Matsumoto property characterises the parabolic quasi-Coxeter elements (see Theorem~1.1 in \cite{BaumGobet}).

\begin{theorem}\label{LemmaTransitivityQCox}
An element  $ w \in W$ is a parabolic quasi-Coxeter element if and only if the dual Matsumoto property holds for $w$.
\end{theorem}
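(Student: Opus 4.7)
The plan is to prove the two implications separately. For the forward direction, assume $w$ is a quasi-Coxeter element of a parabolic subgroup $P \leq W$. I would first argue that every reduced $\Refl$-decomposition of $w$ in $W$ consists only of reflections in $P$: since $w \in P$, its moved subspace lies inside the span of the roots of $P$, and the roots of any reduced reflection decomposition of $w$ span exactly this moved subspace; the reflections of $W$ whose roots lie in the span of $P$'s roots are, by the standard characterization of parabolic subgroups, precisely the reflections of $P$. Thus the Hurwitz action on reduced decompositions of $w$ in $W$ coincides with the Hurwitz action in $P$, and we may assume $w$ is quasi-Coxeter in $W = P$. I would then induct on the rank $n$: given two reduced decompositions $(\refl_1,\ldots,\refl_n)$ and $(\refl_1',\ldots,\refl_n')$ of $w$, I would use Hurwitz moves to arrange a common last reflection $\refl_n = \refl_n'$, so that $w \refl_n^{-1}$ admits two reduced decompositions of length $n-1$ which are reduced decompositions of a parabolic quasi-Coxeter element of rank $n-1$ in the parabolic subgroup they generate; the induction hypothesis then closes the argument. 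The existence of a Hurwitz-equivalent decomposition ending in any prescribed reflection below $w$ is classical for Coxeter elements (Bessis \cite{BessisDualMonoid}); for proper quasi-Coxeter elements I would draw on the root-lattice characterization mentioned after Definition~\ref{DefQuasiCoxInGen}.

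For the reverse direction, suppose the Hurwitz action is transitive on the reduced $\Refl$-decompositions of $w$. For any such decomposition $(\refl_1,\ldots,\refl_n)$ with $n = \ell_\Refl(w)$, set $W_w := \langle \refl_1,\ldots,\refl_n\rangle$. Each Hurwitz move $(\refl_i, \refl_{i+1}) \mapsto (\refl_i \refl_{i+1} \refl_i, \refl_i)$ replaces a pair of generators by two elements of $\langle \refl_i, \refl_{i+1}\rangle$, so $W_w$ is preserved by every Hurwitz move; by transitivity it depends only on $w$. By construction $w$ admits a reduced decomposition whose reflections generate $W_w$, and whose length equals the rank of $W_w$ (the rank matches because the roots of any reduced decomposition are linearly independent). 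Hence $w$ is a quasi-Coxeter element of the reflection subgroup $W_w$. It remains to show that $W_w$ is in fact a parabolic subgroup of $W$: one identifies $W_w$ with the subgroup generated by all reflections $\refl \leq w$ in the absolute order, and invokes the fact that, in a finite Coxeter group, such reflection subgroups are parabolic whenever $w$ admits a reduced reflection decomposition of length equal to their rank.

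The main obstacle will be the forward implication for proper quasi-Coxeter elements. The classical Bessis argument for Coxeter elements leans on the existence of a reduced decomposition by simple reflections, a structural crutch unavailable for proper quasi-Coxeter elements. Bridging this gap realistically requires either a uniform argument built on the root-lattice characterization of quasi-Coxeter elements, or a case-by-case analysis following Carter's classification of conjugacy classes via Carter diagrams — which, in type $D_n$, is precisely what the combinatorial apparatus of Sections~\ref{SecMaxDivisors} and \ref{SecDecompReflections} is designed to supply. A secondary subtlety in the reverse direction is verifying that the subgroup $W_w$ is genuinely parabolic rather than merely a reflection subgroup, since not every reflection subgroup of a finite Coxeter group is parabolic.
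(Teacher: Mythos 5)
First, a point of comparison: the paper does not prove this statement. It is quoted as Theorem~1.1 of \cite{BaumGobet}, so there is no internal proof to measure yours against; any honest proof here has to reproduce a substantial part of that paper. Your outline does mirror the broad architecture of the actual argument (reduce to $W=P$, induct on rank via a common final letter, observe that $\langle \refl_1,\dotsc,\refl_n\rangle$ is Hurwitz-invariant), but both directions have genuine gaps at exactly the points where the real work lives.

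In the forward direction, the entire content of the theorem is the step you defer: showing that a reduced decomposition of a \emph{proper} quasi-Coxeter element can be Hurwitz-moved to end in a prescribed reflection occurring at the end of another reduced decomposition. You correctly identify this as ``the main obstacle'' and gesture at the root-lattice characterization or a Carter-diagram case analysis, but you do not carry either out; in \cite{BaumGobet} this step is an induction interleaved with a classification-based case analysis (with computer verification in the exceptional types) and occupies most of that paper. As written, the induction scaffold is sound but empty at its core, so the forward implication is not established.

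In the reverse direction, the deduction that $w$ is quasi-Coxeter in $W_w=\langle \refl_1,\dotsc,\refl_n\rangle$ is fine (Hurwitz-invariance of $W_w$ plus linear independence of the roots of a reduced decomposition). The gap is parabolicity of $W_w$: the ``fact'' you invoke --- that a reflection subgroup generated by a reduced reflection decomposition of length equal to its rank is parabolic --- is false. In $W=I_2(4)=B_2$ with simple reflections $s,t$, the longest element $w=s\cdot tst$ has $\ell_T(w)=2$ and $\langle s,tst\rangle$ has rank $2$, yet this subgroup has order $4$ and index $2$ and is not parabolic. (Hurwitz transitivity also fails there, since $s$ and $tst$ commute and the orbit of $(s,tst)$ misses $(t,sts)$, so the theorem itself is safe --- but your justification is not.) Parabolicity of $W_w$ must be extracted from the transitivity hypothesis itself; this is essentially Theorem~6.1 of \cite{BaumGobet} (quoted as Lemma~\ref{LemParaSub} in the paper, though stated there under the hypothesis you are trying to establish), and it is not free.
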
  

We recall the following fact, and thereby introduce the notation $P_w$  for parabolic quasi-Coxeter elements $w \in W$. The result is an easy consequence of Theorem~6.1 of \cite{BaumGobet}.

\begin{lemma}\label{LemParaSub}
Let $w \in W$ be a parabolic quasi-Coxeter element and $w = \refl_1\refl_2 \dotsc \refl_k$ be a reduced decomposition into reflections.
Then $P_w:= \langle  \refl_1, \ldots , \refl_k \rangle$ is a parabolic subgroup and the definition of $P_w$ is independent of the choice
of the reduced reflection decomposition of $w$.
\end{lemma}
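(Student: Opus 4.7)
The two assertions can be handled simultaneously by exploiting the dual Matsumoto property provided by Theorem~\ref{LemmaTransitivityQCox}.

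The plan is first to record the key invariance principle for the Hurwitz action: if two $k$-tuples $(\refl_1,\dotsc,\refl_k)$ and $(\refl'_1,\dotsc,\refl'_k)$ in $\Refl^k$ are in the same $\mathcal{B}_k$-orbit, then $\langle \refl_1,\dotsc,\refl_k \rangle = \langle \refl'_1,\dotsc,\refl'_k \rangle$. This reduces to checking the statement for one application of a generator $\boldsymbol{s}_i$ (and of $\boldsymbol{s}_i^{-1}$), where the new tuple just replaces $(\refl_i,\refl_{i+1})$ by $(\refl_i\refl_{i+1}\refl_i,\refl_i)$: the first new entry is a conjugate of $\refl_{i+1}$ by the element $\refl_i$ already in the tuple, so both generating sets lie in each other's generated subgroup.

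Next, since $w$ is by assumption a parabolic quasi-Coxeter element, by definition there exists a parabolic subgroup $P$ of $W$ in which $w$ is a quasi-Coxeter element, so that $w$ admits a reduced reflection decomposition $w=\refl'_1\dotsm\refl'_k$ with $\langle \refl'_1,\dotsc,\refl'_k \rangle = P$ and $k = \ell_\Refl(w)$ (noting that reflections of $P$ are reflections of $W$, so reflection lengths in $P$ and in $W$ coincide for elements of $P$). Now let $w=\refl_1\refl_2\dotsm\refl_k$ be an arbitrary reduced reflection decomposition of~$w$. By Theorem~\ref{LemmaTransitivityQCox}, the Hurwitz action of $\mathcal{B}_k$ on the set of reduced reflection decompositions of $w$ is transitive, so the tuples $(\refl_1,\dotsc,\refl_k)$ and $(\refl'_1,\dotsc,\refl'_k)$ lie in the same orbit. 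Applying the invariance principle of the preceding paragraph, we obtain
\[
\langle \refl_1,\dotsc,\refl_k \rangle \;=\; \langle \refl'_1,\dotsc,\refl'_k \rangle \;=\; P.
\]

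This identity simultaneously establishes both claims: the right-hand side is parabolic, so $P_w$ is parabolic; and the equality depends neither on the chosen decomposition on the left nor on the witnessing decomposition on the right, so $P_w$ is well-defined. The only non-trivial ingredient is the dual Matsumoto property, which is imported from Theorem~\ref{LemmaTransitivityQCox}; everything else is a direct check, and the main conceptual step is simply to notice that the Hurwitz generators act on tuples by braid-type substitutions that visibly preserve the subgroup generated.
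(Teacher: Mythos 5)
Your argument is correct in substance, but it takes a different route from the paper: the paper gives no proof at all for this lemma, simply recording it as "an easy consequence of Theorem~6.1 of \cite{BaumGobet}", whereas you derive it from the dual Matsumoto property already stated as Theorem~\ref{LemmaTransitivityQCox} (which is Theorem~1.1 of \cite{BaumGobet}) combined with the elementary observation that the Hurwitz moves preserve the subgroup generated by a tuple. That combination is a clean and essentially self-contained deduction from material already present in the paper, which is a genuine advantage over an opaque citation; the trade-off is that you are leaning on the harder of the two imported theorems (transitivity of the Hurwitz action), whereas the cited Theorem~6.1 of \cite{BaumGobet} is tailored exactly to this statement.

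One step deserves more care than your parenthetical gives it. To apply Theorem~\ref{LemmaTransitivityQCox} you need the decomposition $w=\refl'_1\dotsm\refl'_k$ witnessing that $w$ is quasi-Coxeter in the parabolic subgroup $P$ to be a \emph{reduced} reflection decomposition of $w$ in $W$, i.e.\ you need $\ell_{\Refl}(w)=\ell_{\Refl_P}(w)$. The inclusion $\Refl_P\subseteq\Refl$ only yields $\ell_{\Refl}(w)\le\ell_{\Refl_P}(w)$; the reverse inequality is true but requires an actual argument, for instance Carter's formula $\ell_{\Refl}(w)=\dim\operatorname{im}(1-w)$ in the reflection representation, or Theorem~2.1 of \cite{Dyer}, which the authors invoke elsewhere (in the proof of Proposition~\ref{PropFailureLattice}) for exactly this kind of compatibility of absolute length with parabolic subgroups. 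With that reference supplied, your proof is complete.
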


\subsection{Decomposition diagrams}

We introduce diagrams related to reduced decompositions.

\begin{definition}\label{DefDiagramRedDecomp}
	Let  $\refl_1\refl_2 \dotsc \refl_k$ be a reduced decomposition of $w \in W$. We define a decomposition diagram related to  $\refl_1\refl_2 \dotsc \refl_k$ as follows. The vertices of the diagram correspond to the reflections $\refl_1,\refl_2, \dotsc, \refl_k$. If two reflections commute, we put no edge between the related vertices. Otherwise, we put an edge, and we label it by the order of the product of the two reflections when this order is strictly bigger than 3.
\end{definition}

In  Carter's classification of the conjugacy classes in the Weyl groups \cite{Carter}, it is shown that every element $w$ in $W$ is the product $w = w_1w_2$ of two involutions, and that each involution is the product of commuting reflections, which then provides a bipartite decomposition of $w$. Carter exhibited the list of conjugacy classes of proper quasi-Coxeter elements by describing for each class a diagram related to a bipartite decomposition for a representative of the class (see Table 2 in \cite{Carter}) which we call a Carter diagram. Note that Definition~\ref{DefDiagramRedDecomp} generalises the notion of Carter diagrams.

\subsection{Interval groups of quasi-Coxeter elements}\label{SubsectionIntervalGrpsOfQCoxElts}

We start by defining left and right division.

\begin{definition}\label{DefAbsoluteOrder}
	
	Let $v, w \in W$. We say that $v$ is a (left) divisor of $w$, and write $v \preceq w$, if $w = v u$ with $u \in W$ and $\ell_{\Refl}(w) = \ell_{\Refl}(v) + \ell_{\Refl}(u)$, where $\ell_{\Refl}(w)$ is the length over $\Refl$ of $w \in W$. The order relation $\preceq$ is called the absolute order relation on $W$.
	
	The interval $[1,w]$ related to an element $w \in W$ is defined to be the set of divisors of $w$ for $\preceq$, that is $[1,w] = \{v \in W ~|~ v \preceq w\}$.
	
	Similarly, we define the division from the right. We say that $v$ is a right divisor of $w$, and write $v \preceq_{r} w$, if $w = u v$ with $u \in W$ and $\ell_{\Refl}(w) = \ell_{\Refl}(v) + \ell_{\Refl}(u)$. Similarly, we also define the interval $[1,w]_{r}$ of right divisors of an element $w \in W$.	
\end{definition}

\begin{remark}\label{RemParabolicQuasiCox}
A quasi-Coxeter element has the inductive  property that every left divisor of it is a parabolic  quasi-Coxeter element  (see Corollary~6.11 in \cite{BaumGobet}).
Therefore, if $w$ is a  quasi-Coxeter element, then every element in the interval  $[1,w]$ is a parabolic  quasi-Coxeter element.
\end{remark}

Now we introduce the definition of an interval group related to quasi-Coxeter elements in $W$. Let $w$ be a quasi-Coxeter element in $W$. Consider the interval $[1,w]$ of divisors of $w$.

\begin{definition}\label{DefIntervalGrpsDn}
	
	We define the group $G([1,w])$ by a presentation with set of generators $\bs{[1,w]}$ in bijection with the interval $[1,w]$, and relations corresponding to the relations in $[1,w]$, meaning that $\bs{uv} = \bs{r}$ if $u,v,r \in [1,w]$, $uv=r$, and $u \preceq r$, i.e. $\ell_{\Refl}(r) = \ell_{\Refl}(u) + \ell_{\Refl}(v)$. 
	
\end{definition}

By transitivity of the Hurwitz action on the set of reduced decompositions of $w$ (see Lemma \ref{LemmaTransitivityQCox}), the next result follows immediately.

\begin{proposition}\label{PropDualPresDn}
	 Let $w \in W$ be a quasi-Coxeter element, and let $\RRefl \subset \bs{[1,w]}$ be the copy of the set of reflections $\Refl$ in $W$. Then $$G([1,w]) = \langle  \RRefl ~|~  \rrefl \rrefl' = \rrefl' \rrefl'' ~\mbox{for}~\rrefl , \rrefl',  \rrefl''  \in   \RRefl~\mbox{if}~\refl \neq \refl' , \refl'' \in \Refl~\mbox{and}~\refl \refl' = \refl' \refl'' \preceq w \rangle$$
	is a presentation of the interval group with respect to $w$.
\end{proposition}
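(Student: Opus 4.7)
The plan is to exhibit mutually inverse homomorphisms between $G([1,w])$ and the group $H$ presented on the right-hand side. The easy direction is $\phi\colon H \to G([1,w])$ defined by $\rrefl \mapsto \bs{\refl}$ for each $\rrefl \in \RRefl$: each defining relation $\rrefl\rrefl' = \rrefl'\rrefl''$ of $H$ holds in $G([1,w])$ because $\refl \preceq \refl\refl' \preceq w$ gives $\bs{\refl}\bs{\refl'} = \bs{\refl\refl'}$, and symmetrically $\refl' \preceq \refl'\refl'' = \refl\refl'$ gives $\bs{\refl'}\bs{\refl''} = \bs{\refl'\refl''} = \bs{\refl\refl'}$.

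For the converse I would define $\psi\colon G([1,w]) \to H$ on each generator $\bs{u}$, $u \in [1,w]$, by fixing any reduced reflection decomposition $u = \refl_1\refl_2\dotsc\refl_k$ and setting $\psi(\bs{u}) := \rrefl_1\rrefl_2\dotsc\rrefl_k$. The crux is independence of this choice. By Remark~\ref{RemParabolicQuasiCox}, $u$ is itself a parabolic quasi-Coxeter element, so by Theorem~\ref{LemmaTransitivityQCox} the Hurwitz action of $\mathcal{B}_k$ is transitive on the reduced decompositions of $u$. It therefore suffices to show that $\rrefl_1\dotsc\rrefl_k$ is invariant under each elementary Hurwitz move $(\refl_i,\refl_{i+1}) \mapsto (\refl_{i+1},\refl_{i+1}\refl_i\refl_{i+1})$, and this is exactly the defining relation of $H$ applied to the pair $(\refl_i,\refl_{i+1})$, provided that $\refl_i\refl_{i+1} \preceq w$. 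This last containment is secured by a secondary application of Hurwitz transitivity to a reduced decomposition of $u$: by iterated Hurwitz moves one can rearrange so that $\refl_i\refl_{i+1}$ becomes an initial segment, giving $\refl_i\refl_{i+1} \preceq u \preceq w$.

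Once $\psi$ is defined on generators, it respects the defining relations $\bs{u}\bs{v}=\bs{r}$ of $G([1,w])$: the concatenation of reduced decompositions of $u$ and $v$ is a reduced decomposition of $r$, because $\ell_\Refl(r) = \ell_\Refl(u) + \ell_\Refl(v)$, so both sides are computed by the same product in $H$. The compositions $\phi\circ\psi$ and $\psi\circ\phi$ then act as the identity on the respective generating sets, so $\phi$ and $\psi$ are mutually inverse isomorphisms. The only nonformal step is the well-definedness of $\psi$, and both ingredients required there---invariance under Hurwitz moves and the membership $\refl_i\refl_{i+1} \preceq w$---reduce to Theorem~\ref{LemmaTransitivityQCox}, which is why the authors describe the proposition as following immediately from that theorem.
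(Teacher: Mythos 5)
Your proof is correct and follows exactly the route the paper has in mind: the paper offers no written argument, asserting only that the proposition ``follows immediately'' from Hurwitz transitivity (Theorem~\ref{LemmaTransitivityQCox}) together with Remark~\ref{RemParabolicQuasiCox}, and your two maps $\phi$, $\psi$ with the well-definedness check via elementary Hurwitz moves are precisely the standard fleshing-out of that assertion. Nothing in your argument deviates from or adds to what the authors intend, so there is nothing further to compare.
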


Notice that the relations described in Proposition~\ref{PropDualPresDn} are the relations that are visible in the poset $([1,w],\preceq)$ in heights one and two. They are called the dual braid relations (see \cite{BessisDualMonoid}).

The following result due to Bessis--Digne--Michel \cite{BessisDigneMichel} is the main theorem in interval Garside theory.

\begin{theorem}\label{TheoremMainThmIntGarTheory}
	If the two intervals $[1,w]$ and $[1,w]_{r}$ are equal (we say that $w$ is balanced) and if both posets $([1,w],\preceq)$ and $([1,w]_{r},\preceq_{r})$ are lattices, then the interval group $G([1,w])$ is an interval Garside group.
\end{theorem}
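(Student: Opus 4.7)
The plan is to realise $G([1,w])$ as the group of fractions of the interval monoid $M([1,w])$, obtained by reading the presentation of Definition~\ref{DefIntervalGrpsDn} in the category of monoids, and then to verify that $M([1,w])$ is a Garside monoid whose Garside element is the image $\bs{w}$ of $w$. Once this is done, the conclusion that $G([1,w])$ is an interval Garside group follows from the general theory of Garside structures.

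The first step is to show that the canonical map $[1,w]\to M([1,w])$ is injective and that the poset $([1,w],\preceq)$ embeds into the left-divisibility poset of $M([1,w])$. I would do this by constructing a greedy normal form: every element of $M([1,w])$ should be expressible uniquely as a product $\bs{u}_1\bs{u}_2\cdots\bs{u}_k$ of images of elements of $[1,w]$ such that no reflection below $u_{i+1}$ can be absorbed into $u_i$. The existence and uniqueness of this normal form rests on the lattice property of $([1,w],\preceq)$, which guarantees local confluence of the elementary rewriting rules (each rule replaces a pair of simples by its lcm followed by the complementary factor). The symmetric construction using $[1,w]_r$ and $\preceq_r$ yields a right normal form.

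Next, I would verify the axioms of a Garside monoid in the sense of Dehornoy--Paris: $M([1,w])$ is atomic, with atoms in bijection with the reflections $\refl\preceq w$; it is cancellative on both sides; any two elements admit left and right lcms and gcds; and $\bs{w}$ is a Garside element. The last point means that the left and right divisors of $\bs{w}$ in $M([1,w])$ coincide (this uses the balanced hypothesis $[1,w]=[1,w]_r$), generate $M([1,w])$ as a monoid, and form a lattice (the second hypothesis). Cancellativity, atomicity and the existence of lcms all follow from the normal form. By Ore's theorem, $M([1,w])$ then embeds into its group of fractions, which must be isomorphic to $G([1,w])$ since the presentations coincide.

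The main obstacle is the combined injectivity and compatibility statement in the first step: a priori the defining relations of $M([1,w])$ could collapse distinct simples or create unwanted identifications, and one cannot freely invert generators inside a monoid as one does in $G([1,w])$. The technical heart of the argument is proving that the elementary rewriting rules between greedy decompositions are confluent, and this is exactly the place where the lattice hypotheses on both $\preceq$ and $\preceq_r$ are essential. This is precisely the content of the theorem of Bessis--Digne--Michel \cite{BessisDigneMichel} on which the statement relies.
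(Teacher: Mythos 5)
The paper does not prove this statement at all: it is quoted verbatim as the main theorem of interval Garside theory, with the proof delegated to Bessis--Digne--Michel \cite{BessisDigneMichel} (see also \cite{DehornoyEtAl}), so there is no internal argument to compare yours against. That said, your outline is the standard proof from the literature and is essentially correct: one reads the presentation of Definition~\ref{DefIntervalGrpsDn} as a monoid presentation to obtain the interval monoid $M([1,w])$, shows it is atomic and cancellative with lcms and gcds, identifies $\bs{w}$ as a Garside element (balancedness giving the coincidence of its left and right divisors, the lattice hypotheses giving the lattice of simples), and concludes by Ore's theorem that $M([1,w])$ embeds in its group of fractions $G([1,w])$. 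The one place where your sketch is thinner than the actual argument is the first step: the injectivity of $[1,w]\to M([1,w])$ and the confluence of the greedy rewriting do not follow from the lattice property alone but also use the fact that the partial product on $[1,w]$ is inherited from an ambient group, which supplies the associativity/cube conditions needed for the ``germ'' to generate a Garside monoid; in \cite{DehornoyEtAl} this is packaged as the statement that an interval in a group equipped with a suitable length function yields a left- and right-associative germ, and the lattice condition then upgrades it to a Garside germ. If you make that dependence explicit, your argument is a faithful reconstruction of the cited proof.
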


Since $\Refl$ is stable under conjugation, quasi-Coxeter elements are always balanced. The only obstruction to obtaining interval Garside groups is the lattice property. In the case when the quasi-Coxeter element is a Coxeter element, Bessis \cite{BessisDualMonoid} showed the following.

\begin{theorem}\label{TheoremBessisDualMonoid}
	Let $W$ be a finite Coxeter group. The interval group $G([1,w])$ for $w \in W$ a Coxeter element is an interval Garside group isomorphic to the corresponding Artin group $A(W)$.
\end{theorem}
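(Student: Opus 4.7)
The plan is to apply Theorem~\ref{TheoremMainThmIntGarTheory}. Since $\Refl$ is stable under conjugation, every Coxeter element is balanced, so $[1,w] = [1,w]_r$ and the two partial orders agree; hence the only obstruction to producing an interval Garside group is verifying the lattice property of $([1,w],\preceq)$. Once this is in hand, I would identify $G([1,w])$ with $A(W)$ by comparing the dual presentation of Proposition~\ref{PropDualPresDn} with the standard Artin presentation.

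For the lattice property I would work geometrically in the reflection representation $V$ of $W$. To each $v \in [1,w]$ associate the moved subspace $\mathrm{Mov}(v) := \mathrm{im}(v - \mathrm{id}_V)$, whose dimension equals $\ell_\Refl(v)$. One checks that a reduced reflection decomposition of $v$ corresponds to a choice of basis of $\mathrm{Mov}(v)$ in roots of $W$, and that $v' \preceq v$ is equivalent to an inclusion $\mathrm{Mov}(v') \subseteq \mathrm{Mov}(v)$ together with the compatibility that $v'^{-1}v$ acts trivially on a complement. Using the transitivity of the Hurwitz action (Theorem~\ref{LemmaTransitivityQCox}), I would then exhibit explicit meets and joins via intersections and sums of moved subspaces. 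In type $A$ this recovers Kreweras' theorem that the non-crossing partition lattice is a lattice; in types $B$, $D$ and in the exceptional cases one can either argue type-by-type via Carter diagrams or invoke a uniform Brady--Watt style argument.

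To identify $G([1,w])$ with $A(W)$, fix a reduced expression $w = s_1 s_2 \cdots s_n$ of the Coxeter element as a product of simple reflections. Each $s_i$ lies in $[1,w]$, so I would define a homomorphism $\phi \colon A(W) \to G([1,w])$ by $\boldsymbol{s}_i \mapsto \boldsymbol{s}_i \in \boldsymbol{[1,w]}$, and check the braid relations of $A(W)$ by restriction to the rank-two sub-intervals $[1, s_i s_j]$, whose dual relations are exactly the dihedral braid relations. An inverse $\psi \colon G([1,w]) \to A(W)$ is built by sending each $\rrefl \in \RRefl$ to its canonical positive Artin lift (writing the reflection $\refl$ as a palindrome in the $s_i$). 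The verification that $\psi$ respects the dual braid relations of Proposition~\ref{PropDualPresDn} reduces, relation by relation, to a calculation inside a rank-two dihedral subgroup, which is direct; together with $\phi \circ \psi = \mathrm{id}$ and $\psi \circ \phi = \mathrm{id}$ on generators this yields the isomorphism.

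The hard part will be the lattice property: meets and joins do not follow formally from the definition of $\preceq$, and require either the moved-space interpretation above or a case-by-case verification. Bessis's original argument in~\cite{BessisDualMonoid} indeed proceeds by a type-by-type reduction to the appropriate non-crossing partition lattice, and this is the step I would expect to absorb most of the work.
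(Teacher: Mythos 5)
This statement is not proved in the paper at all: it is quoted as a known result of Bessis, with \cite{BessisDualMonoid} as the reference, so there is no internal proof to compare your proposal against. (The paper does observe, in Remark~\ref{RemNewProofDnBessis}, that its main theorem specialised to $m=1$ gives a \emph{new} proof of the isomorphism $G([1,w]) \cong A(W)$ in type $D_n$, by the maximal-divisor induction of Sections~\ref{SecMaxDivisors}--\ref{SecProofPres} rather than by Bessis's route.) Your outline does follow the general shape of Bessis's argument, but as a proof it has two genuine gaps. First, the lattice property is not established: you correctly identify it as the crux and then defer it to a ``type-by-type'' or ``Brady--Watt style'' argument without supplying either. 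Reduction of $\preceq$ to inclusion of moved spaces gives you meets, but the existence of joins \emph{inside the interval} $[1,w]$ is exactly the hard content, and nothing in your sketch produces it.

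Second, your construction of $\psi$ is too optimistic. A dual braid relation $\rrefl\rrefl'=\rrefl'\rrefl''$ with $\refl\refl'\preceq w$ does live in a rank-two parabolic subgroup, but that subgroup is a \emph{conjugate} of a standard dihedral parabolic, not a standard one; the palindromic lifts of $\refl,\refl',\refl''$ are words involving many of the $\ss_i$, and verifying the relation in $A(W)$ is not ``a calculation inside a rank-two dihedral subgroup.'' This verification is precisely where the bulk of the work lies --- it is what Steps~2--4 of the present paper's strategy (reduced decompositions of maximal divisors, lifting reflections, and the inductive Lemmas of the Appendix) are designed to handle in type $D_n$, and what Bessis handles by his own local-to-global and case-by-case arguments. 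As written, your proposal asserts the two hardest steps rather than proving them.
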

 
The main purpose of our work is to continue the analysis of the interval groups related to all quasi-Coxeter elements.
 
We introduce the following notation, which we shall use in the remainder of the article.

\begin{notation}\label{NotationBraidRelators}
	We denote by $\braid{x}{y}$ the braid relator $xyx(yxy)^{-1}$ or $xy(yx)^{-1}$, by $\tc{x}{y}{z}{t}$ the twisted cycle commutator relator $[x,yz^{-1}tzy^{-1}]$, and by $\cc{x}{y}{z}{t}$ the cycle commutator relator $[x,yztz^{-1}y^{-1}]$.
\end{notation}

\subsection{Strategy of the proof}\label{SubsectionStrategyProof}

We describe here our general strategy for the proof of Theorem~\ref{ThmPres} (see also Theorem~\ref{ThmPresDn1}). We are also going to mention some important results that we established within the proof, as they are interesting in themselves. 

Let $W$ be the Coxeter group of type $D_n$. We employ the description of $W$ as a group of monomial matrices as will be explained in Section~\ref{SubsectionCoxGrp}. Let $w$ be a quasi-Coxeter element of the Coxeter group $W$ of type $D_n$. Actually, there exists a reduced decomposition of $w$ whose reflections $s_1,s_2, \dotsc, s_n$ satisfy the relations that can be described by the Carter diagram $\Delta$ (see Figure~\ref{FigureCarterDiagramDn} in Section~\ref{SubsectionQuasiCoxeter}). The quasi-Coxeter elements in type $D_n$ are characterised in Proposition~\ref{PropConjQCoxBipartite}. From now on, we let $S := \{s_1, s_2, \dotsc, s_n\}.$

By \cite{Cameron}, the Coxeter group $W$ admits a presentation on the set $S$ of generators whose relations are the quadratic relations ($s_i^2 = 1$ for $1 \leq i \leq n$) along with the relations of the diagram $\Delta$ and the cycle commutator relator $\cc{s_i}{s_j}{s_k}{s_l}$ in correspondence with the unique $4$-cycle $(s_i,s_j,s_k,s_l)$ of $\Delta$. Note that in $W$, the cycle commutator and the twisted cycle commutator relators associated with the $4$-cycle are the same. All this is described in Section \ref{SecDualCox}.

Consider the interval group $G([1,w])$.  By Proposition \ref{PropDualPresDn}, the group $G([1,w])$ is generated by a copy $\RRefl$ of $\Refl$ along with the dual braid relations $\rrefl \rrefl' = \rrefl' \rrefl''$ ($\rrefl \in \RRefl$ corresponds to $\refl \in \Refl$), whenever $\refl \refl' = \refl' \refl'' \preceq w$, $\refl \neq \refl'$ and $\refl, \refl', \refl'' \in \Refl$.

We want to prove that $G([1,w])$ is isomorphic to the group $\GG$ that is defined by a presentation on the set of generators $\SS = \{\ss_1, \ss_2, \dotsc, \ss_n\} \subset \RRefl$ corresponding to the subset $S$ of $\Refl$ with the corresponding relations described by $\Delta$, along with the twisted cycle commutator relator $\tc{\ss_i}{\ss_j}{\ss_k}{\ss_l}$ in correspondence with the $4$-cycle $(\ss_i,\ss_j,\ss_k,\ss_l)$, where $\ss_i, \ss_j, \ss_k, \ss_l$ correspond to $s_i,s_j,s_k,s_l$, respectively.\\

\textit{Step 1: Definition of $f$.} We define a map $f$ from $\GG$ to $G([1,w])$ by setting $f(\ss_i) = \ss_i$ for each $i$. It will follow from Proposition~\ref{PropPreparationHomoF} and Lemma~\ref{LemmaTC} that the braid relators $\braid{\ss_i}{\ss_j}$ and the twisted cycle commutator relator $\tc{\ss_i}{\ss_j}{\ss_k}{\ss_l}$ specified by the presentation given for $\GG$ hold in $G([1,w])$ as well. Hence $f$ extends to a homomorphism from $\GG$ to $G([1,w])$.\\

\textit{Step 2: Reduced decompositions and their diagrams.} Let $w_0$ be a divisor of length $n-1$ of $w$. We describe a particular reduced decomposition of $w_0 = \refl_1 \refl_2 \dotsc \refl_{n-1}$ in Sections \ref{SubsectionRedDecompI} and \ref{SubsectionRedDecompII}, and characterise whether $w_0$ is a Coxeter element or a proper quasi-Coxeter element in the subgroup  $P_{w_0} := \langle  \refl_1 ,  \dotsc  ,\refl_{n-1} \rangle \subseteq W$ (see for instance Propositions \ref{PropDiagTypeI}, \ref{PropCoxDiagTypeII,III}, and \ref{PropQCoxDiagTypeII,III}). In order to describe these reduced decompositions, we describe a combinatorial technique (in Section~\ref{SubsectionDivisorsln-1}) by using the description of the elements of $W$ as monomial matrices. 

The reduced decomposition $\refl_1 \refl_2 \dotsc \refl_{n-1}$ of $w_0$ corresponds to a decomposition diagram (see Definition~\ref{DefDiagramRedDecomp}) that we denote by $\Delta_0$. We show that  $\Delta_0$  is a disjoint union of Coxeter diagrams of types $A$ or $D$ or of the same type as $\Delta$ (but with fewer generators) with a (single) $4$-cycle (see Propositions \ref{PropDiagTypeI}, \ref{PropCoxDiagTypeII,III}, and \ref{PropQCoxDiagTypeII,III}). Thereby we are able to determine the type of $w_0$.

If $w_0$ is a Coxeter element in $P_{w_0}$, then by \cite{BessisDualMonoid}, the dual braid 
relation $\rrefl \rrefl'=\rrefl' \rrefl''$ is satisfied in $G([1,w_0])$ where $\refl \refl'=\refl' \refl'  \preceq  w$ is a consequence of the relators $\braid{\rrefl_i}{\rrefl_j}$ for $\refl_i, \refl_j \in \{\refl_1, \refl_2, \dotsc, \refl_{n-1}\}$ ($i \neq j$). If $w_0$ is a proper quasi-Coxeter element, then induction on $n$ implies that the dual braid relation $\rrefl \rrefl'=\rrefl' \rrefl''$ satisfied in $G([1,w_0])$ is a consequence of the relations $\braid{\rrefl_i}{\rrefl_j}$ and $\tc{\rrefl_{i}}{\rrefl_{j}}{\rrefl_{k}}{\rrefl_{l}}$ for $\refl_i, \refl_j \in \{\refl_1, \refl_2, \dotsc, \refl_{n-1}\}$ ($i \neq j$) and $(\refl_i, \refl_j, \refl_k, \refl_l)$ the $4$-cycle of $\Delta_0$. In this way, we have shown that all the dual braid relations are consequences of the relations between $\rrefl_1$, $\rrefl_2$, $\dotsc$, $\rrefl_{n-1}$ in correspondence with the relations between $\refl_1$, $\refl_2$, $\dotsc$, $\refl_{n-1}$ implied by each decomposition diagram $\Delta_0$ corresponding to a divisor $w_0$ of length $n-1$ of $w$.\\

\textit{Step 3: Decomposition of elements in $\Refl$ and $\RRefl$.} In order to find a homomorphism $g: G([1,w])\rightarrow \GG$, we describe a decomposition of each element $\rrefl$ in $\RRefl$ in terms of elements in $\SS$. This can be done because of the dual Matsumoto property for $w$, i.e. the transitive Hurwitz action on the set of reduced decompositions over $\Refl$ of $w$. This is based on particular decompositions over $S$ of the reflections in $\Refl$. This is done in Propositions~\ref{PropDecReflections}~and~\ref{PropLiftReflections}.

Let $g$ be the map that sends $\bs{\refl_i} \in G([1,w])$ to its decomposition over $\bs{S}$ as given in Proposition~\ref{PropLiftReflections}. We show that the map $g$ is a homomorphism. Suppose that $\rrefl \rrefl'=\rrefl' \rrefl''$ is a dual braid relation of $G([1,w])$. We need to check that the image of this relation under $g$ holds within $\GG$. There exists a divisor $w_0$ of length $n-1$ of $w$ such that $tt'$ is a prefix of $w_0$. Therefore, $\rrefl \rrefl'=\rrefl' \rrefl''$ holds in $G([1,w_0])$. So rather than checking that the images by $g$ of all the dual braid relations hold in $\GG$, our strategy is to shift the analysis to the groups $G([1,w_0])$, from which we can establish the desired homomorphism. This analysis was done in Step 2.\\

\textit{Step 4: Lift of the relations.} In order to conclude homomorphism for $g$, we finally need to show that the image by $g$ of all the defining relations between the elements $\rrefl_1$, $\rrefl_2$, $\dotsc$, $\rrefl_{n-1}$ of $G([1,w_0])$ in correspondence with the relations between $\refl_1$, $\refl_2$, $\dotsc$, $\refl_{n-1}$ implied by each decomposition diagram $\Delta_0$ can be derived from the relations between the elements of $\SS$ that are implied by the diagram $\Delta$. This is proved in Section \ref{SubsectionLiftRedDecomp} by induction on $n$. The base of our induction is the cases $n=4$ and $n=5$ (see Sections \ref{SubsectionN4} and \ref{SubsectionN5}). Note that we also separate $n=5$ as a base of induction so that we do not need anymore to show twisted cycle commutator relators in Section \ref{SubsectionLiftRedDecomp}. This is possible since, apart from one special case (Equation \ref{Eqt11} with $i=n-1$), in the reduced decomposition $\refl_1 \refl_2 \dotsc \refl_{n-1}$ of each $w_0$, the only reflection $\refl_i$ such that $g(\rrefl_i)$ contains $\ss_n$ in its decomposition over $\SS$ is precisely the last one, that is $\refl_{n-1}$.\\

Section \ref{SubsectionProofn>5} concludes the proof that $g$ is a homomorphism. Isomorphism between $G([1,w])$ and $\GG$ is proved once the composites $f \circ g$ and $g \circ f$ have been shown to be identity maps.

Note that it might be possible to apply this strategy more generally,
but this article only deals with the case where $W$ is of type $D_n$.

\section{Dual approach to the Coxeter group of type $D_n$}\label{SecDualCox}

\subsection{The Coxeter group of type $D_n$}\label{SubsectionCoxGrp}

We employ the description of the Coxeter group $W$ of type $D_n$ ($n \geq 4$) as the group of $n \times n$ monomial matrices such that the nonzero coefficients are equal to $1$ or $-1$ and their product is equal to $1$. This description will help us to describe our combinatorial technique and to easily explain our arguments.

Note that this description of $W$ corresponds to the case $d=1, e=2$ of the infinite series $G(de,e,n)$ of complex reflection groups (see \cite{ShephardTodd}).


\begin{notation}\label{NotnMarkedPerm}
	
A monomial matrix $w \in W$ is associated with a permutation in Sym$(n)$ that has been marked by overlining some elements within its cycles. We call the result, $\sigma_w$, a marked permutation, where an entry $i$ ($1 \leq i \leq n$) of a cycle indicates that the coefficient in row $i$ of $w$ is equal to $1$, while an entry $\overline{i}$ indicates that this coefficient is equal to $-1$.

\end{notation}

The monomial matrix $w$ is denoted by $\sigma_w$, so we have $w = \sigma_w$. When there is no confusion, we remove the cycles $(i)$, for $1 \leq i \leq n$ of length $1$ from $\sigma_w$.

We note that, for $w \in W$, the marked permutation $\sigma_w$ must always have
an even number of overlined entries.
We also note
that the notation $\sigma_w$ is not the cycle decomposition of
the permutation $\pi_w$ of the unit vectors $\pm e_i, 1 \leq i \leq n$ of $\mathbb{R}^n$ that
is also naturally associated with $w$. In fact, each cycle of length $k$ in $\sigma_w$ corresponds to either two cycles of length $k$ or a single cycle of length $2k$ in $\pi_w$.

\begin{example}\label{ExampleMarkedPerm}

Let \begin{small}$w = \begin{pmatrix}
0 & 0 & -1 & 0 & 0 & 0\\
0 & 0 & 0 & 0 & 1 & 0\\
0 & 0 & 0 & 1 & 0 & 0\\
1 & 0 & 0 & 0 & 0 & 0\\
0 & 0 & 0 & 0 & 0 & -1\\
0 & 1 & 0 & 0 & 0 & 0
\end{pmatrix}$\end{small} be an element of $W$ of type $D_6$. Using Notation~\ref{NotnMarkedPerm}, we have $w = (\overline{1},3,4)(2,\overline{5},6)$, while we have $\pi_w=(1,-3,-4,-1,3,4)(2,5,-6,-2,-5,6)$ where, for brevity, we label the unit vectors $\pm i$ with $1 \leq i \leq 6$. .
\end{example}

We set the following convention for the remainder of the article.

\begin{convention}\label{ConventionNotations}
	When $i = i'$, we interpret each cycle
	$(i',i'+1,\dotsc,\overline{i})$, $(i',i'-1,\dotsc,\overline{i})$, $(\overline{i},i-1,\dotsc,i')$, or $(\overline{i},i+1,\dotsc,i')$ as the 1-cycle $(\overline{i})$. 
	By convention, we also set $\overline{\overline{i}} = i$ for any positive integer~$i$.
	We also suppose that a decreasing-index cycle of the form $(x_i, x_{i-1}, \dotsc, x_{i'})$ is the identity element when $i  \leq i'$ and an increasing-index expression of the form $(x_i, x_{i+1}, \dotsc, x_{i'})$ is the identity element when $i \geq  i'$. Finally, we also assume that a cycle of length $\geq 3$ that contains $n$ should start by $n$.
\end{convention}

\begin{lemma}\label{PropDnReflections}

The set $\Refl$ of reflections in $W$ 
is represented by the set of elements
\[ \{ (i,j),\,(\overline{i},\overline{j})\ |\ 1\leq i\neq j \leq n\} .\]
\end{lemma}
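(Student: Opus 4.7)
The plan is to proceed by direct verification, using the standard realisation of $W$ as the Weyl group of the root system of type $D_n$, whose roots are $\pm(e_i \pm e_j)$ for $1 \leq i < j \leq n$. Since reflections in $W$ are in bijection with pairs $\{\alpha,-\alpha\}$ of opposite roots, I know in advance that $|\Refl| = 2\binom{n}{2} = n(n-1)$; so the task reduces to writing each of these $n(n-1)$ reflections explicitly as a monomial matrix in $W$ and translating it via Notation~\ref{NotnMarkedPerm}.

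I would treat the two families of roots separately. For $\alpha = e_i - e_j$ with $i<j$, the reflection $s_\alpha$ swaps $e_i$ and $e_j$ and fixes every other basis vector, so its monomial matrix is the permutation matrix of the transposition $(i,j)$; all nonzero coefficients equal $+1$, so it lies in $W$, and it corresponds to the unmarked $2$-cycle $(i,j)$. For $\alpha = e_i + e_j$ with $i<j$, the reflection sends $e_i \mapsto -e_j$ and $e_j \mapsto -e_i$ while fixing the other $e_k$, so the associated monomial matrix has $-1$ in positions $(j,i)$ and $(i,j)$ and $+1$ on the remaining diagonal entries. The product of nonzero coefficients is $(-1)(-1)=1$, so this element lies in $W$; since rows $i$ and $j$ are precisely the rows carrying a $-1$, the corresponding marked permutation is $(\overline{i},\overline{j})$.

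To finish, the list $\{(i,j),(\overline{i},\overline{j}) : 1\leq i < j\leq n\}$ contains exactly $n(n-1)$ distinct elements, matching $|\Refl|$, and is therefore the complete set of reflections; the condition $1 \leq i \neq j \leq n$ in the statement simply records each cycle twice, since $(i,j)=(j,i)$ and $(\overline{i},\overline{j})=(\overline{j},\overline{i})$ represent the same monomial matrix. No step here is a real obstacle; the argument is a straightforward matching between the root system description and the monomial/marked-permutation bookkeeping. The one point requiring care is the convention that an overlined entry $\overline{i}$ in a cycle signals a $-1$ in row $i$, which is exactly what makes $(\overline{i},\overline{j})$ the reflection in $e_i + e_j$ rather than, for instance, the diagonal involution sending $e_i \mapsto -e_i$ and $e_j \mapsto -e_j$.
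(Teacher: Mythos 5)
Your argument is correct and is essentially the verification the paper has in mind: the paper states this lemma without proof, adding only the remark identifying $(i,j)$ with a transposition matrix and $(\overline{i},\overline{j})$ with the same matrix with signs changed in rows $i$ and $j$, which is exactly the matching you carry out via the roots $e_i-e_j$ and $e_i+e_j$. Your counting step ($n(n-1)$ reflections, in bijection with the positive roots of $D_n$) correctly closes the argument that the list is complete.
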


Note that the reflection $(i,j)$ represents a transposition matrix 
whose entries are all 1, while $(\overline{i},\overline{j})$ represents a matrix derived from the previous one by changing the signs in rows $i$ and $j$. 
For $2 \leq i \leq n$, 
we denote by $s_i$ the reflection $(i-1,i)$.\\

The following lemma is straightforward to prove.

\begin{lemma}\label{LemmaIntersectIndices}

Let $\refl$ and $\refl'$ be two reflections in $W$,
with 
$\refl = (i,j)$ or $(\overline{i},\overline{j})$ and $\refl' = (k,l)$ or $(\overline{k},\overline{l})$.
If $\{i,j\}$ does not intersect $\{k,l\}$, then the reflections $\refl$ and $\refl'$ commute. If the cardinality of the intersection is $1$, then we get $(i,j)(k,l)(i,j) = (k,l)(i,j)(k,l)$.

\end{lemma}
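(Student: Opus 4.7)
My approach is to verify both assertions by direct computation using the monomial matrix description of $W$ from Section~\ref{SubsectionCoxGrp}, equivalently by tracking the action of each reflection on the signed basis $\{\pm e_1,\dots,\pm e_n\}$ of $\mathbb{R}^n$. The key observation is that a reflection of the form $(a,b)$ or $(\overline{a},\overline{b})$ acts non-trivially only on the plane $\langle e_a, e_b\rangle$ and fixes every other basis vector: $(a,b)$ swaps $e_a\leftrightarrow e_b$, while $(\overline{a},\overline{b})$ sends $e_a\mapsto -e_b$ and $e_b\mapsto -e_a$. Consequently, if $\{i,j\}\cap\{k,l\}=\emptyset$ then $\refl$ and $\refl'$ act non-trivially on complementary coordinate subspaces and so commute; this settles the first assertion.

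For the braid relation, after relabeling I may assume $j=k$. There are four subcases, according to which (if any) of $\refl$ and $\refl'$ carry overlines. In each subcase, a short calculation on the three basis vectors $e_i, e_j, e_l$ shows that both $\refl\refl'\refl$ and $\refl'\refl\refl'$ coincide with a single reflection in $\Refl$ whose underlying index pair is $\{i,l\}$, the signs on its entries being determined by the parities of the overlines on $\refl$ and $\refl'$. A conceptually cleaner formulation is to note that $\refl\refl'\refl={\refl'}^{\refl}$ and $\refl'\refl\refl'=\refl^{\refl'}$ are both conjugates of a reflection by a reflection, hence lie in $\Refl$; one then only has to verify that the two resulting reflections have the same index pair and the same overline pattern, which is immediate from the formulas above.

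The statement of the lemma writes out only the unmarked subcase $(i,j)(k,l)(i,j)=(k,l)(i,j)(k,l)$, but the same argument covers the three subcases with overlines uniformly. The only (minor) obstacle is the systematic bookkeeping of the sign flips across those subcases; there is no conceptual difficulty, in line with the authors' remark that the lemma is straightforward.
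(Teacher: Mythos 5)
Your proof is correct: the observation that each reflection acts non-trivially only on the coordinate plane indexed by its two entries settles the disjoint case, and the conjugation computation (or the four-subcase check on $e_i,e_j,e_l$) establishes the braid relation when the index sets share one element. The paper itself omits the proof, declaring the lemma ``straightforward to prove,'' and your argument is exactly the direct verification being alluded to, including the harmless extension to the overlined subcases that the statement only writes out for plain transpositions.
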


\subsection{Length function over the set of reflections}\label{SubsectionLengthShi}

Shi computed in \cite{ShiReflectionLength} the length function over the set of reflections in the infinite series of complex reflection groups. The Coxeter group $W$ of type $D_n$ corresponds to the group $G(2,2,n)$. The length function over the set of reflections in $G(2,2,n)$ appears in Corollary 3.2 in \cite{ShiReflectionLength}. Let us recall this result.

\begin{proposition}\label{PropLengthShi}

Let $w \in G(2,2,n)$, and suppose that $w$ is represented by a marked 
permutation $\sigma_w$ as described in Notation~\ref{NotnMarkedPerm}
Suppose that $\sigma_w$ is written as a product of $r$ cycles,
and define $e$ to be the number of these cycles that have an even number of overlined entries. Then, the length $\ell_{\Refl}(w)$ over $\Refl$ of $w$ is equal to $n-e$. 
\end{proposition}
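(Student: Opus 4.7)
The plan is to prove the formula $\ell_{\Refl}(w) = n - e$ by establishing matching upper and lower bounds. The upper bound is handled constructively, cycle by cycle, while the lower bound comes from monitoring how the statistic $e(w)$ varies under right multiplication by a single reflection.

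For the upper bound $\ell_{\Refl}(w) \leq n - e$, observe that the cycles of $\sigma_w$ partition $\{1, 2, \dotsc, n\}$, so it suffices to express each cycle of length $k$ individually as a product of reflections in $\Refl$: using $k-1$ reflections when the cycle has an even number of overlined entries, and $k$ reflections when the number of overlines is odd. Summing the contributions across all cycles yields exactly $n - e$. A zero-overline cycle admits the standard decomposition $(i_1, i_2, \dotsc, i_k) = (i_1, i_2)(i_2, i_3) \cdots (i_{k-1}, i_k)$. For cycles with $2m \geq 2$ overlines, pairs of overlined entries can be absorbed using reflections of the form $(\overline{a}, \overline{b})$ without exceeding $k-1$ factors. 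For a cycle with an odd number of overlines, one additional barred reflection is required to correct the sign parity; the total number of overlined entries in $\sigma_w$ is even (an immediate consequence of $w \in W$), so odd-overline cycles come in pairs and the extra sign corrections can be consistently combined.

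For the lower bound $\ell_{\Refl}(w) \geq n - e$, it suffices to prove that for every reflection $\refl \in \Refl$ we have $e(w\refl) \leq e(w) + 1$. Since $e(1) = n$, any factorisation $w = \refl_1 \refl_2 \cdots \refl_\ell$ then yields $n = e(1) \leq e(w) + \ell$ by iterating the inequality along the chain from $w$ back to $1$ (using that reflections are involutions), and hence $\ell \geq n - e(w)$. To verify the monotonicity claim, I would examine how right multiplication by $\refl$ modifies $\sigma_w$. An unbarred reflection $\refl = (i,j)$ either merges two cycles of $\sigma_w$ (if $i$ and $j$ lie in different cycles) or splits a single cycle into two (if $i$ and $j$ lie in the same cycle), with no change to the set of overlined entries. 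A barred reflection $\refl = (\overline{i}, \overline{j})$ has the same effect on the underlying unsigned permutation but additionally toggles the overline status of both $i$ and $j$. In each combinatorial case, a direct parity bookkeeping establishes that $e$ changes by at most $1$ in absolute value.

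The main obstacle is the case analysis for the lower bound, especially for barred reflections, where the cycle split-or-merge interacts non-trivially with the toggling of overlines at $i$ and $j$. One must track whether the two toggled entries end up in the same or in separate resulting cycles and record the parity of overlines in each, but the total number of configurations is small and the verification is routine.
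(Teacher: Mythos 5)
Your argument is correct, but note that the paper does not actually prove this proposition: it is imported verbatim from Corollary~3.2 of Shi \cite{ShiReflectionLength}, so there is no internal proof to compare against, and what you have written is a self-contained replacement. The two halves of your proof play different roles relative to the paper. The upper bound $\ell_{\Refl}(w)\le n-e$ essentially reconstructs what the paper does later anyway in Procedures~\ref{ProcedureTypeI} and~\ref{ProcedureTypesII,III}: peel off one entry of a cycle per reflection, using a barred reflection when the leading entry is barred, so that an even-overline cycle of length $k$ reaches the identity in $k-1$ steps, while two odd-overline cycles each terminate in a single barred fixed point and are finished off jointly by the extra pair $(u,v)(\overline{u},\overline{v})$ — this is exactly your "odd cycles come in pairs" step, and the count $(k_1-1)+(k_2-1)+2=k_1+k_2$ gives $n-e$ overall. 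The lower bound via the subadditivity $e(wt)\le e(w)+1$ together with $e(1)=n$ is the genuinely new content, and it is sound: right multiplication by a reflection changes the number of cycles of the underlying permutation by exactly one, and in every split/merge configuration one checks $|e(wt)-e(w)|=1$. One detail you should correct before carrying out that case check: right multiplication by $(\overline{i},\overline{j})$ does not toggle the overlines at $i$ and $j$ themselves but at their preimages $\sigma_w^{-1}(i)$ and $\sigma_w^{-1}(j)$ under the underlying permutation (compare Example~\ref{ExampleRedDecomp1}, where multiplying $(\overline{1},2,4,\overline{3})$ on the right by $(\overline{1},\overline{2})$ clears the overline on $3$, not on $2$); in particular, in the split case the two toggled entries always land in \emph{different} resulting cycles. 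This imprecision does not damage the conclusion — however the two toggled entries distribute over the resulting cycles, each affected cycle's parity flips at most once and $|\Delta e|$ remains $1$ — but the "routine bookkeeping" you defer to must be done with the correct pair of entries.
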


\begin{example}

Let $w \in G(2,2,6)$ be as in Example \ref{ExampleMarkedPerm}. Then $w = (\overline{1},3,4)(2,\overline{5},6)$. Here we have $C_1 = (\overline{1},3,4)$ and $C_2 = (2,\overline{5},6)$. Both cycles have an odd number (equal to $1$) of overlined entries. Hence $e = 0$ and we get $\ell_{\Refl}(w) = 6$.

\end{example}

Note also that the $n \times n$ identity matrix corresponds to the marked 
permutation with $n$ cycles each containing a single entry $i$. Each
cycle then contains 0 overlined entries. So $e=n$ and we see that the
$\ell_{\Refl}(Id)=0$.

\subsection{Quasi-Coxeter elements in type $D_n$}\label{SubsectionQuasiCoxeter}

Let $W$ be a Coxeter group of type $D_n$. By Carter\cite{Carter}, $W$ contains $\lfloor{\frac{n}{2}}\rfloor$ conjugacy classes of quasi-Coxeter elements. We fix an integer $m$ with $1 \leq m \leq \lfloor n/2 \rfloor$;
this fixes a conjugacy class of quasi-Coxeter elements in $W$. The $m$-th conjugacy class is associated by Carter \cite{Carter} with the diagram $\Delta_{m,n}$  displayed in Figure~\ref{FigureCarterDiagramDn}. When there is no confusion, we denote $\Delta_{m,n}$ by $\Delta$.

\begin{figure}[H]
\begin{tikzpicture}

\node[draw, shape=circle, label=below:$s_2$] (2) at (0,0) {};
\node[draw, shape=circle, label=below:$s_3$] (3) at (1,0) {};
\node[] (0) at (2,0) {};
\node[] (00) at (3,0) {};
\node[draw, shape=circle, label=below:$s_{m-1}$] (4) at (4,0) {};
\node[draw, shape=circle, label=above:$s_m$] (5) at (5,0) {};

\node[draw, shape=circle, label=above:$s_{m+1}$] (6) at (6,1) {};
\node[draw, shape=circle, label=below:$s_1$] (7) at (6,-1) {};
\node[draw, shape=circle,label=above:$s_{m+2}$] (8) at (7,0) {};
\node[draw, shape=circle,label=below:$s_{m+3}$] (9) at (8,0) {};
\node[] (10) at (9,0) {};
\node[] (11) at (10,0) {};
\node[draw,shape=circle, label=below:$s_{n-1}$] (12) at (11,0) {};
\node[draw, shape=circle, label=below:$s_n$] (13) at (12.2,0) {};


\draw[thick,-] (2) to (3);
\draw[thick,-] (3) to (0);
\draw[thick,dashed,-] (0) to (00);
\draw[thick,-] (00) to (4);
\draw[thick,-] (4) to (5);
\draw[thick,-] (5) to (6);
\draw[thick,-] (5) to (7);
\draw[thick,-] (8) to (6);
\draw[thick,-] (8) to (7);
\draw[thick,-] (8) to (9);
\draw[thick,-] (9) to (10);
\draw[thick,dashed,-] (10) to (11);
\draw[thick,-] (11) to (12);
\draw[thick,-] (12) to (13);

\end{tikzpicture}\caption{Carter diagram $\Delta_{m,n}$ of type $D_n$.}\label{FigureCarterDiagramDn}
\end{figure}
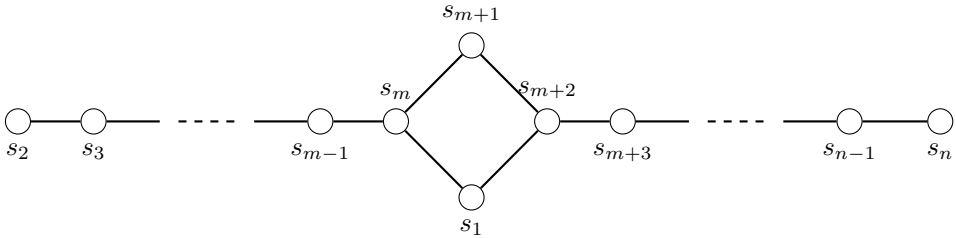

In $\Delta$ an edge between two nodes $s_i$ and $s_j$ describes the relation $s_is_js_i = s_js_is_j$, and 
when there is no edge between $s_i$ and $s_i$, this means that the two reflections commute. In the next proposition, we choose a particular representative of each conjugacy class of quasi-Coxeter elements that will be helpful in the description of our main result.

\begin{proposition}\label{PropConjQCoxBipartite}
Choose the reflections  $s_1:= (\overline{m},\overline{m+1})$ and $s_i:= (i-1,i)$ for $2 \leq i \leq n$.
The $m$-th conjugacy class of quasi-Coxeter elements contains a representative $$w = (m, m-1, \dotsc, 2, \overline{1})(n, n-1, \dotsc, \overline{m+1}).$$ The element $w$ can be written as the product $s_2 s_3 \dotsc s_m s_1 s_{m+1}s_{m+2} s_{m+3} s_{m+4} \dotsc s_n$.

\end{proposition}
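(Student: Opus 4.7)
The plan is to establish three separate facts: (i) that the specified product of reflections equals the monomial matrix encoded by $\sigma_w = (m, m-1, \dotsc, 2, \overline{1})(n, n-1, \dotsc, \overline{m+1})$; (ii) that $w$ is a quasi-Coxeter element of $W$; and (iii) that $w$ belongs to the $m$-th conjugacy class, i.e.\ the one associated with the Carter diagram $\Delta_{m,n}$.

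For (i), I would compute the product directly as a monomial matrix, writing $w = P_1 \cdot s_1 \cdot P_2$ with $P_1 = s_2 s_3 \cdots s_m$ and $P_2 = s_{m+1} s_{m+2} \cdots s_n$. Since $P_1$ is a product of adjacent transpositions on the first $m$ indices, it acts as the unsigned cycle sending $e_j \mapsto e_{j+1}$ for $1 \leq j \leq m-1$ and $e_m \mapsto e_1$, fixing the rest; similarly $P_2$ sends $e_j \mapsto e_{j+1}$ for $m \leq j \leq n-1$ and $e_n \mapsto e_m$, fixing $e_1, \dotsc, e_{m-1}$. Inserting $s_1$, which swaps $e_m \leftrightarrow -e_{m+1}$ and fixes the rest, and composing, I obtain column by column
\[ w e_j = e_{j+1} \text{ for } 1 \leq j \leq m-2, \quad w e_{m-1} = e_m, \quad w e_m = -e_1, \]
\[ w e_j = e_{j+1} \text{ for } m+1 \leq j \leq n-1, \quad w e_n = -e_{m+1}. \]
Translating these data into the marked-permutation language of Notation~\ref{NotnMarkedPerm} reproduces exactly the two cycles appearing in the statement.

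For (ii), I first check that the decomposition is reduced: each of the two cycles of $\sigma_w$ contains exactly one overlined entry, so in the terminology of Proposition~\ref{PropLengthShi} we have $e = 0$ and $\ell_{\Refl}(w) = n$. Next, $s_2, \dotsc, s_n$ already generate the pure-permutation subgroup Sym$(n) \subset W$, and conjugating $s_1 = (\overline{m}, \overline{m+1})$ by these transpositions produces every reflection of the form $(\overline{i}, \overline{j})$; together with Lemma~\ref{PropDnReflections} this yields $\langle s_1, \dotsc, s_n \rangle = W$. Combined with (i), Definition~\ref{DefQuasiCoxInGen} is satisfied, so $w$ is a quasi-Coxeter element.

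For (iii), I would build the decomposition diagram (Definition~\ref{DefDiagramRedDecomp}) of the tuple $(s_2, \dotsc, s_m, s_1, s_{m+1}, \dotsc, s_n)$ and match it with $\Delta_{m,n}$. Lemma~\ref{LemmaIntersectIndices} shows that adjacent pairs $(s_i, s_{i+1})$ for $2 \leq i \leq n-1$ are non-commuting and satisfy a braid relation of length three, as do $(s_1, s_m)$ and $(s_1, s_{m+2})$ (one shared index each). Although $s_1 = (\overline{m}, \overline{m+1})$ and $s_{m+1} = (m, m+1)$ share both indices, their roots $e_m + e_{m+1}$ and $e_m - e_{m+1}$ are orthogonal, so they commute; every remaining pair has disjoint index sets and also commutes. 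The resulting graph is precisely $\Delta_{m,n}$ from Figure~\ref{FigureCarterDiagramDn}. By Carter's classification of Weyl-group conjugacy classes through admissible diagrams \cite{Carter}, this places $w$ in the $m$-th conjugacy class, completing the proof.

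The main obstacle is the signed bookkeeping in step (i): because $s_1$ is the only generator producing sign changes and it is sandwiched between the two unsigned permutation blocks, care is needed to see that the $-1$'s land exactly on the entries labelled $\overline{1}$ and $\overline{m+1}$ and nowhere else. Once the monomial matrix $w$ is in hand, step (ii) is a brief application of Proposition~\ref{PropLengthShi} together with the standard generation of Sym$(n)$, and step (iii) reduces to a routine diagram check against $\Delta_{m,n}$.
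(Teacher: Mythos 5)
Your steps (i) and (ii) are correct and supply exactly the verification the paper dismisses as ``readily checked'': the column-by-column computation of $P_1 s_1 P_2$ is right, the translation into the marked permutation $(m,\dotsc,2,\overline{1})(n,\dotsc,\overline{m+1})$ is consistent with Notation~\ref{NotnMarkedPerm}, and combining Proposition~\ref{PropLengthShi} (each cycle has one overlined entry, so $e=0$ and $\ell_{\Refl}(w)=n$) with the generation argument for $\langle s_1,\dotsc,s_n\rangle = W$ does establish that $w$ is a quasi-Coxeter element. Your observation that $s_1$ and $s_{m+1}$ commute because their roots $e_m+e_{m+1}$ and $e_m-e_{m+1}$ are orthogonal is also needed and correct, since Lemma~\ref{LemmaIntersectIndices} does not cover the case of two reflections sharing both indices.

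The gap is in step (iii). Carter's diagram--class correspondence is set up through bipartite decompositions $w=w_1w_2$ into involutions, each a product of reflections with mutually orthogonal roots; an admissible diagram of $w$ is by definition the graph of such a decomposition. Your tuple $(s_2,\dotsc,s_m,s_1,s_{m+1},\dotsc,s_n)$ is not in bipartite order, and reordering a product of non-commuting reflections changes the element, so the fact that its decomposition diagram is the graph $\Delta_{m,n}$ does not by itself place $w$ in the class that Carter labels by $\Delta_{m,n}$: you would still have to produce a bipartite decomposition of (a conjugate of) $w$ with the same diagram, and know that $\Delta_{m,n}$ is the admissible diagram of a unique class. (Compare Lemma~\ref{LemDynkinDiag}, which the paper needs precisely to justify this kind of inference even in the special case of cycle-free diagrams.) The paper sidesteps all of this: its proof is a one-line appeal to Proposition~25 of \cite{Carter}, which parametrises the conjugacy classes of $W(D_n)$ by signed cycle-types; the marked-permutation form of $w$ exhibits its signed cycle-type directly as two negative cycles of lengths $m$ and $n-m$, which is exactly Carter's datum for the $m$-th class. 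Replacing your step (iii) by this cycle-type comparison closes the gap and is shorter.
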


\begin{proof}
See Proposition 25 in \cite{Carter} for representatives of the conjugacy classes, where Carter defines the notion of signed cycle-type. The second sentence of the proposition is readily checked.
\end{proof}

We call  the set $\{s_1, s_2, \dotsc, s_n\}$ a Carter generating set. As $w$ is a quasi-Coxeter element, every Carter generating set generates the Coxeter group.

Note that the Carter diagram $\Delta$ contains $m-2$  and $n-m-2$ vertices on the left- and right-hand sides of the single $4$-cycle within $\Delta$, respectively.

For $m=1$, the element $w$ is a Coxeter element and $\Delta_{1,n}$ is the Coxeter diagram of type $D_n$, and $w$ becomes $ (\overline{1})(n, n-1, \dotsc, \overline{2})$.
We call a proper Carter diagram of type $D_n$
a diagram $\Delta_{m,n}$ that is not the Coxeter diagram of type $D_n$, that is with $m \geq 2$.

The Carter diagram $\Delta$  is the decomposition diagram (see Definition~\ref{DefDiagramRedDecomp}) related to the reduced decomposition $s_2 s_3 \dotsc s_m s_1 s_{m+1}s_{m+2} s_{m+3} s_{m+4} \dotsc s_n$ of the quasi-Coxeter element $w$. We are using this particular decomposition of the quasi-Coxeter element since it will be helpful to describe the divisors of length $n-1$ of $w$ in Section~\ref{SecMaxDivisors} and to describe necessary combinatorial techniques for our analysis in Sections~\ref{SecMaxDivisors} and~\ref{SecDecompReflections}.

\begin{example}
	The element $w = (3,2, \overline{1})(6, 5, \overline{4})$ is a representative of a conjugacy class of quasi-Coxeter elements in type $D_6$. In this case $m=3$.
\end{example}

\begin{lemma}\label{LemDynkinDiag}
If $h \in W$ has a reduced  reflection decomposition $h = t_1\dotsc t_r$, $t_i \in T$ whose decomposition diagram is a Coxeter diagram, then this diagram is the Carter diagram of the conjugacy class which contains $h$, and $h$ is a Coxeter element in $ \langle t_1, \dotsc , t_r\rangle$.
\end{lemma}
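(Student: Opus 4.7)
The plan is to identify the subgroup $P := \langle t_1, \dotsc, t_r \rangle$ as a copy of the Coxeter group $W(\Gamma)$ in which the $t_i$ play the role of simple reflections. Once this is done, $h = t_1 \dotsc t_r$ is by definition a Coxeter element of $P$, and the Carter diagram of its $W$-conjugacy class is obtained as follows: since the tree $\Gamma$ is bipartite, a two-coloring of its vertices partitions $\{t_1, \dotsc, t_r\}$ into two sets of pairwise-commuting reflections, yielding a bipartite decomposition of $h$ whose associated diagram is $\Gamma$, which by Carter's classification \cite{Carter} is the Carter diagram of the class of $h$.

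For the identification $P \cong W(\Gamma)$, I would proceed in two steps. First, since $h = t_1 \dotsc t_r$ is reduced over $\Refl$, we have $\ell_\Refl(h) = r$, and by the Carter / Brady--Watt description of reflection length as the codimension of the fixed space, the roots $\alpha_i$ of the $t_i$ are linearly independent and span an $r$-dimensional subspace of the ambient space on which $P$ acts faithfully as a finite real reflection group of rank $r$. Second, because $\Gamma$ is a forest we may re-sign the $\alpha_i$ (which does not alter the reflections $t_i$) so that they form a simple system. Concretely, root each connected component of $\Gamma$ and traverse it outward, flipping the sign of each child-root if necessary so that its angle with its parent equals $\pi - \pi/m_{ij}$; acyclicity guarantees that this greedy assignment is consistent, and non-edges of $\Gamma$ correspond to commuting reflections whose roots are already orthogonal and require no adjustment. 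The resulting re-signed roots have the Gram matrix prescribed by $\Gamma$, so the reflection group they generate, namely $P$, is $W(\Gamma)$ with the $t_i$ as its Coxeter generators.

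The main delicacy is the sign-flipping step: it succeeds precisely because $\Gamma$ is acyclic, since on a cycle the greedy sign assignments can conflict. This is the geometric reason why proper quasi-Coxeter elements, whose Carter diagrams contain a $4$-cycle, sit outside the scope of this lemma and instead require the twisted cycle commutator relations appearing in Theorem \ref{ThmPres}. With $P = W(\Gamma)$ established and $h$ written as a product of all of its Coxeter generators in some order, the two conclusions of the lemma follow immediately from the outline in the first paragraph.
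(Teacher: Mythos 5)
Your proof is correct and follows essentially the same route as the paper's: both arguments exploit the acyclicity of the diagram to re-sign the roots $\alpha_1,\dotsc,\alpha_r$ so that all pairwise dihedral angles are obtuse, conclude that the re-signed roots form a simple system for the reflection subgroup $\langle t_1,\dotsc,t_r\rangle$ (the paper cites \cite[Lemma~4.1]{BW} for this, together with Dyer's theorem that reflection subgroups are Coxeter groups), and deduce that $h$ is a Coxeter element there. Your explicit greedy traversal and the linear-independence remark simply fill in details the paper delegates to citations.
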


\begin{proof}
According to Dyer \cite[Theorem~(3.3)]{ Dyer1} the reflection subgroup $W_h:= \langle t_1, \dotsc , t_r\rangle$ of $W$ is a Coxeter group itself.
If the decomposition diagram of $h$ is a Coxeter diagram, we can choose the signs of the roots $\alpha_1, \dotsc,\alpha_r$ related to the reflections $t_1, \dotsc, t_r$ such that their dihedral angles are obtuse,
as the diagram does not contain any cycles.  Therefore $\{\alpha_1, \dotsc ,\alpha_r\}$ is a simple system for $W_h$, see \cite[Lemma~4.1]{BW}. This yields that $h$ is a Coxeter element in $W_h$. Therefore the decomposition diagram is  the Carter diagram of the conjugacy class which contains $h$.
\end{proof}

The next result is a consequence of Theorem~3.10 of Cameron--Seidel--Tsaranov \cite{Cameron}.

\begin{proposition}\label{PropCameron}

	The Coxeter group has a presentation with set of generators the Carter generators. The relations are $s_i^2 = 1$ for $1 \leq i \leq n$ and the relations described by $\Delta_{m,n}$ together with the cycle commutator relation $$[s_m,s_{m+1}s_{m+2}s_1s_{m+2}s_{m+1}] = (s_ms_{m+1}s_{m+2}s_1s_{m+2}s_{m+1})^2 = 1.$$

%
%

\end{proposition}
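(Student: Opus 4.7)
The plan is to apply Theorem~3.10 of Cameron--Seidel--Tsaranov \cite{Cameron} directly to the Carter diagram $\Delta_{m,n}$. Their theorem, for a Carter diagram $\Delta$ associated with a conjugacy class in a Weyl group $W$, yields a presentation of $W$ whose generators correspond to the vertices of $\Delta$, and whose relations consist of the quadratic relations $s^2 = 1$, the relations encoded by the edges of $\Delta$ (braid relation across each edge, commutation across each non-edge), and one cycle commutator relator per cycle of $\Delta$. Since the Carter generators $\{s_1,\dotsc,s_n\}$ from Proposition~\ref{PropConjQCoxBipartite} have $\Delta_{m,n}$ as their decomposition diagram and realise the quasi-Coxeter element $w$, invoking this theorem reduces the proof to three tasks: verifying the edge/non-edge relations directly, counting the cycles of $\Delta_{m,n}$, and matching the resulting cycle commutator relator with the one stated in the proposition.

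For the first two tasks, I would use Lemma~\ref{PropDnReflections} and Lemma~\ref{LemmaIntersectIndices}. For $i,j \geq 2$, the index sets $\{i-1,i\}$ and $\{j-1,j\}$ meet in a single element exactly when $|i-j|=1$, giving the braid relation $s_is_js_i = s_js_is_j$, and commute otherwise. The reflection $s_1=(\overline m,\overline{m+1})$ has index set $\{m,m+1\}$, which meets the index sets of $s_m$ and $s_{m+2}$ in one element each (giving braid relations) and commutes with every other $s_i$. This matches exactly the edges and non-edges visible in Figure~\ref{FigureCarterDiagramDn}. Inspection of the same figure shows that $\Delta_{m,n}$ contains a unique cycle, the $4$-cycle on $\{s_m,s_{m+1},s_{m+2},s_1\}$, since removing any one of its four edges produces a tree (namely the Dynkin diagram of type $D_n$).

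For the cycle commutator relator, expanding $\cc{s_m}{s_{m+1}}{s_{m+2}}{s_1}$ according to Notation~\ref{NotationBraidRelators} gives $[s_m,\,s_{m+1}s_{m+2}s_1 s_{m+2}^{-1}s_{m+1}^{-1}]$, which coincides with $[s_m,\,s_{m+1}s_{m+2}s_1 s_{m+2}s_{m+1}]$ via the quadratic relations. To see that this equals $(s_ms_{m+1}s_{m+2}s_1s_{m+2}s_{m+1})^2$, set $c:=s_{m+1}s_{m+2}s_1 s_{m+2}s_{m+1}$; then $c$ is the conjugate of the involution $s_1$ by $s_{m+1}s_{m+2}$, hence itself an involution, so $[s_m,c]=s_mcs_mc^{-1}=s_mcs_mc=(s_mc)^2$. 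The main subtlety, though minor, is to confirm that the Cameron--Seidel--Tsaranov cycle commutator relator for a $4$-cycle matches the anchored form $\cc{\cdot}{\cdot}{\cdot}{\cdot}$ used here; one checks (once) that any of the four cyclic choices of anchor vertex yields the same relator modulo the other relations, so no ambiguity arises, and the statement then follows directly from \cite[Theorem~3.10]{Cameron}.
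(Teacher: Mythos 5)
Your proposal is correct and follows essentially the same route as the paper, which gives no written proof beyond the remark that the proposition is a consequence of Theorem~3.10 of Cameron--Seidel--Tsaranov; you simply spell out the verification (edge relations, uniqueness of the $4$-cycle, and the identification of the commutator with the squared word via the involution $c=s_{m+1}s_{m+2}s_1s_{m+2}s_{m+1}$) that the paper leaves implicit. One small point: Lemma~\ref{LemmaIntersectIndices} does not cover the pair $s_1=(\overline{m},\overline{m+1})$ and $s_{m+1}=(m,m+1)$, whose index sets coincide, so their commutation (immediate from the matrix description and used elsewhere in the paper) requires a separate one-line check rather than an appeal to that lemma.
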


We end this section by the following statement that will be used to construct the homomorphism $f$ introduced in Step 1 of the strategy of our proof in Section~\ref{SubsectionStrategyProof}.

	\begin{proposition}\label{PropPreparationHomoF}
		\begin{itemize}
			\item[(1)] We have that  $s_is_j \preceq w$ and $s_is_j $ is of order $2$, for $|i-j| > 1$.
			\item[(2)] We have that $s_{i}s_{i+1} \preceq w$, and $s_{i}s_{i+1} $ is of order $3$, for $2 \leq i  \leq n-1$.
			\item[(3)] Let $2 \leq i \leq n$. We have that $s_1s_m \preceq w$, $s_{m+2}s_1 \preceq w$, and $s_1 s_i \preceq w$.
			Further the elements $s_1s_m$ and $s_{m+2}s_1$ are of order $3$, and $s_1 s_i $ is of order $2$.
			\item[(4)] Let $\refl = s_{1}^{s_ms_{m+1}} = (\overline{m-1},\overline{m})$. We have that $\refl s_{m+2} \preceq w$ and $\refl s_{m+2}$ is of order~$2$.
		\end{itemize}
		
\end{proposition}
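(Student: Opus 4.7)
The plan is to treat the order claims separately from the divisibility claims, using Lemma~\ref{LemmaIntersectIndices} for the former and the reduced decomposition $w = s_2 s_3 \cdots s_m s_1 s_{m+1} \cdots s_n$ of Proposition~\ref{PropConjQCoxBipartite} together with the dual Matsumoto property (Theorem~\ref{LemmaTransitivityQCox}) for the latter.

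For the orders, each reflection in play has an explicit support as a pair of indices: $s_1 = (\overline{m},\overline{m+1})$ is supported on $\{m,m+1\}$, $s_i = (i-1,i)$ on $\{i-1,i\}$, and $\refl = (\overline{m-1},\overline{m})$ on $\{m-1,m\}$. By Lemma~\ref{LemmaIntersectIndices}, disjoint supports yield commuting reflections (product of order $2$), while supports sharing a single index yield the three-string braid relation (product of order $3$). A direct case analysis then handles all four items: for (3), $\{m,m+1\}$ meets $\{m-1,m\}$ and $\{m+1,m+2\}$ in exactly one index, giving order $3$ for $s_1 s_m$ and $s_{m+2} s_1$, while for the remaining $s_i$ the support $\{i-1,i\}$ is either disjoint from or equal to $\{m,m+1\}$, giving order $2$; for (4), $\{m-1,m\}$ and $\{m+1,m+2\}$ are disjoint.

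For divisibility, $tt' \preceq w$ is equivalent to the existence of a reduced reflection decomposition of $w$ starting with $(t,t')$, and by transitivity of the Hurwitz action such a decomposition lies in the Hurwitz orbit of the initial tuple $(s_2,\dotsc,s_{m-1},s_m,s_1,s_{m+1},\dotsc,s_n)$. The key structural observation is that $s_1$ commutes with every $s_j$ with $j \notin \{m,m+1,m+2\}$, because $\{m,m+1\}$ is disjoint from $\{j-1,j\}$; hence commutation Hurwitz moves slide $s_1$ freely through the blocks $(s_2,\dotsc,s_{m-1})$ and $(s_{m+3},\dotsc,s_n)$. For each claimed pair I will produce an explicit sequence of Hurwitz moves bringing $(t,t')$ to the first two positions of a reduced decomposition of $w$: parts (1) and (2) reduce to standard type-$A$ rearrangements on the $A$-chain $(s_2,\dotsc,s_n)$ after sliding $s_1$ out of the way; the pairs in (3) and (4) are reached by a small number of braid Hurwitz moves localised around the $4$-cycle of $\Delta$. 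For (4) in particular, $\refl = s_1^{s_m s_{m+1}}$ arises via two successive braid Hurwitz moves that conjugate $s_1$ first by $s_m$ and then by $s_{m+1}$, after which $\refl$ commutes with $s_{m+2}$ (disjoint supports) so a final commutation Hurwitz move brings $s_{m+2}$ into the second position.

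The main obstacle is controlling the intermediate reflections produced by braid Hurwitz moves around the $4$-cycle, as these are not themselves Carter generators. As a safety check for each rearrangement I will compute the cycle structure of $(tt')^{-1}w$ as a marked permutation and invoke Proposition~\ref{PropLengthShi}: $tt' \preceq w$ is equivalent to the assertion that $(tt')^{-1}w$ has exactly two cycles with an even number of overlines, giving $\ell_\Refl((tt')^{-1}w) = n-2$, and hence confirming that $(t,t')$ does extend to a reduced reflection decomposition of $w$.
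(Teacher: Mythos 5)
Your proposal is correct and follows essentially the same route as the paper, whose entire proof is the single sentence that the result ``is an immediate consequence of the Hurwitz action and of the choice of the elements $s_i$''; you are simply supplying the details that sentence leaves implicit (support analysis via Lemma~\ref{LemmaIntersectIndices} for the orders, Hurwitz rearrangement of the decomposition $s_2\dotsm s_m s_1 s_{m+1}\dotsm s_n$ for divisibility), and your fallback check via Proposition~\ref{PropLengthShi} is exactly the verification style the paper itself uses elsewhere (e.g.\ in Examples~\ref{ExampleLiftTransp} and~\ref{ExampleLiftII}). The only cosmetic point is that Lemma~\ref{LemmaIntersectIndices} does not literally cover the case of equal index sets (e.g.\ $s_1s_{m+1}$), though commutativity there is immediate.
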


\begin{proof}
	The result is an immediate consequence of the Hurwitz action and of the choice of the elements $s_i$.
\end{proof}


\section{Maximal divisors of quasi-Coxeter elements}\label{SecMaxDivisors}

As we pointed out in the strategy of our proof (Step 2), our method depends on an analysis of maximal divisors
of a quasi-Coxeter element $w$, and in particular of the decomposition of each 
such as a product of $n-1$ reflections.
In Section~\ref{SubsectionDivisorsln-1} we identify 11 different cases for
such maximal divisors $w_0$, which fall into three types, I, II and III, 
and then in the following sections,
we find reduced decompositions for elements $w_0$ of type I 
(in Section~\ref{SubsectionRedDecompI}), and of types II and III
(in Section~\ref{SubsectionRedDecompII}), as well as their decomposition diagrams (see Definition~\ref{DefDiagramRedDecomp}).

\subsection{Divisors of length $n-1$}\label{SubsectionDivisorsln-1}


Let $w = (m, m-1, \dotsc, 2, \overline{1})(n, n-1, \dotsc, \overline{m+1})$ be a quasi-Coxeter element. 
The elements of length $n-1$ that divide $w$ consist of all the products $w (i,j)$ and $w (\overline{i},\overline{j})$ for which $1 \leq i < j \leq n$. We denote by $w_0$ a divisor of length $n-1$ of $w$. We compute these divisors in Equations \ref{Eqt1} to \ref{Eqt11} below. We distinguish $3$ types that we denote by I, II, and III and that are displayed in the following Tables \ref{TableTypeI},~\ref{TableTypeII},~and~\ref{TableTypeIII}. The first column of each table represents the cases for $i$ and $j$. The second column is the divisor $w_0$. Notice that we get from type II to type III by applying symmetry.\\

Remark also that Equation \ref{Eqt2} is similar to Equation \ref{Eqt1}; the difference is that
two entries are further overlined in Equation \ref{Eqt2}. 
We see the same similarities between Equations \ref{Eqt6} and \ref{Eqt7}, and Equations \ref{Eqt9} and \ref{Eqt10}.

Notice that each element $w_0$ of the 11 Equations admits exactly one cycle with an even number of overlined elements (we assume that $0$ is even). Hence each element is of length $n-1$ by Proposition~\ref{PropLengthShi}. In Sections~\ref{SubsectionRedDecompI} and~\ref{SubsectionRedDecompII}, we describe a reduced decomposition over the set $\Refl$ of reflections for each divisor $w_0$ of $w$  of type I and of types II and III, respectively.\\

We provide an example where we explicitly write the monomial matrices.

\begin{example}\label{ExampleTypes}
	
	Let $W$ be a Coxeter group of type $D_5$. Let $m=2$ and let $w = (2,\overline{1})(5,4,\overline{3})$ be a proper quasi-Coxeter element. As a monomial matrix,
	\begin{center}
		$w = \begin{pmatrix}
			0 & -1 & 0 & 0 & 0\\
			1 & 0 & 0 & 0 & 0\\
			0 & 0 & 0 & 0 & -1\\
			0 & 0 & 1 & 0 & 0\\
			0 & 0 & 0 & 1 & 0\\
		\end{pmatrix}$.
	\end{center}
	Let us multiply $w$ from the right by the transposition $(1,4)$ as described in  Equation (1). So we get
	\begin{center}
		$w (1,4) = \begin{pmatrix}
			0 & -1 & 0 & 0 & 0\\
			0 & 0 & 0 & 1 & 0\\
			0 & 0 & 0 & 0 & -1\\
			0 & 0 & 1 & 0 & 0\\
			1 & 0 & 0 & 0 & 0\\
		\end{pmatrix}$.
	\end{center}
	
	Using the marked permutation notation introduced in Notation~\ref{NotnMarkedPerm}, we have that $w (1,4) = (5,\overline{1},2,4,\overline{3})$  (the coefficient is equal to $-1$ on row numbers $1$  and $3$ of the matrix $w_1$) which is compatible with the result of Equation \ref{Eqt1}.
\end{example}

\newpage
\newgeometry{paperwidth=18cm,
	left=1.5cm,
	right=2cm,
	paperheight=32cm,
	top=1cm,
	bottom=2cm}

\begin{table}[]
	\centering
	\begin{small}
	\begin{tabular}{|p{2.5cm}|p{12cm}|}
		\hline
		\vfill \mbox{$1 \leq i \leq m$,}\linebreak \mbox{$(m+1) \leq j \leq n$} &
		\begin{equation}\label{Eqt1}
			w(i,j) = (n, n-1, \dotsc, j+1, i, i-1, \dotsc, \overline{1}, m, m-1, \dotsc, i+1,  j,  j-1, \dotsc, \overline{m+1}).
		\end{equation}\\
		\hline
		\vfill \mbox{$i \neq m$,}\linebreak \mbox{$j \neq n$} &
		\begin{equation}\label{Eqt2}
			w(\overline{i},\overline{j})=(n, n-1, \dotsc, \overline{j+1}, i, i-1, \dotsc, \overline{1}, m, m-1, \dotsc, \overline{i+1}, j, j-1, \dotsc, \overline{m+1}).
		\end{equation}\\
		\hline 
		\vfill \mbox{$i=m$,}\linebreak \mbox{$j \neq n$} &
		\begin{equation}\label{Eqt3}
			w(\overline{m},\overline{j}) = (n, n-1, \dotsc, \overline{j+1}, m, m-1, \dotsc, 1,  j,  j-1, \dotsc, \overline{m+1}).
		\end{equation}\\
		\hline
		\vfill \mbox{$i \neq m$,} \linebreak \mbox{$j=n$} &
		\begin{equation}\label{Eqt4}
			w(\overline{i},\overline{n}) = (n,  n-1, \dotsc, m+1, i, i-1, \dotsc, \overline{1}, m, m-1, \dotsc, \overline{i+1}).
		\end{equation}\\
		\hline
		\vfill \mbox{$i=m$,} \linebreak \mbox{$j=n$} &
		\begin{equation}\label{Eqt5}
			w(\overline{m},\overline{n}) = (n, n-1, \dotsc, m+1, m, m-1, \dotsc, 1) = (n,n-1, \dotsc, 1).
		\end{equation}\\
		\hline 
	\end{tabular}
	\end{small}
	\caption{Type I: $1 \leq i \leq m$ and $(m+1) \leq j \leq n$.}\label{TableTypeI}
\end{table}

\begin{table}[]
	\centering
	\begin{small}
		\begin{tabular}{|p{2cm}|p{12.5cm}|}
			\hline
			\vfill \mbox{$1 \leq i < j \leq m$} &
			\begin{equation}\label{Eqt6}
				w(i,j)=(m, m-1, \dotsc, j+1, i, i-1, \dotsc, \overline{1})(i+1, j, j-1, \dotsc, i+2)(n, n-1, \dotsc, \overline{m+1}).
			\end{equation}\\
			\hline
			\vfill \mbox{$j \neq m$} &
			\begin{equation}\label{Eqt7}
				w(\overline{i},\overline{j})=
				(m, m-1, \dotsc, \overline{j+1}, i, i-1, \dotsc, \overline{1})( \overline{i+1},  j, j-1, \dotsc, i+2)(n, n-1, \dotsc, \overline{m+1}).
			\end{equation}\\
			\hline 
			\vfill \mbox{$j=m$} &
			\begin{equation}\label{Eqt8}
				w(\overline{i},\overline{m})=
				(i, i-1, \dotsc, 1)( \overline{i+1}, m, m-1, \dotsc, i+2)(n, n-1, \dotsc, \overline{m+1}).
			\end{equation}\\
			\hline
		\end{tabular}
	\end{small} \caption{Type II: $1 \leq i < j \leq m$.}\label{TableTypeII}
\end{table}

\begin{table}[]
	\centering
	\begin{small}
		\begin{tabular}{|p{2cm}|p{12.5cm}|}
			\hline
			\vfill \mbox{$(m+1) \leq i <$} \linebreak
				 \mbox{$j \leq n$} &
			\begin{equation}\label{Eqt9}
				w(i,j)=
				(m, m-1, \dotsc, \overline{1})(n, n-1, \dotsc, j+1, i, i-1, \dotsc,  \overline{m+1})(j, j-1, \dotsc, i+1).
			\end{equation}\\
			\hline
			\vfill \mbox{$j \neq n$} &
			\begin{equation}\label{Eqt10}
				w(\overline{i},\overline{j})=
				(m, m-1, \dotsc, \overline{1})(n, n-1, \dotsc, \overline{j+1}, i, i-1, \dotsc,  \overline{m+1})(j, j-1, \dotsc, \overline{i+1}).
			\end{equation}\\
			\hline 
			\vfill \mbox{$j=n$} &
			\begin{equation}\label{Eqt11}
				w(\overline{i},\overline{n})=
				(m, m-1, \dotsc, \overline{1})(i, i-1, \dotsc, m+1)(n, n-1, \dotsc, \overline{i+1}).
			\end{equation}\\
			\hline
		\end{tabular}
	\end{small} \caption{Type III: $(m+1) \leq i < j \leq n$.}\label{TableTypeIII}
\end{table}

\restoregeometry

\subsection{Reduced decompositions and diagrams for type I}\label{SubsectionRedDecompI}

Suppose that $w_0$ has type I (see Table~\ref{TableTypeI}). As a marked permutation, it is a cycle of the form $(x_1,x_2,x_3, \dotsc, x_n)$, where each $x_k$ is equal to $p$ or $\overline{p}$ ($1 \leq p \leq n$),
with $\{x_1,x_2,\dotsc,x_n\} = \{1,2,\dotsc,n\}$, and with an even number of overlined entries (see Equations \ref{Eqt1} to \ref{Eqt5}). We will describe how to produce a reduced reflection decomposition of length $n-1$ for this element.

We continue the study of Example~\ref{ExampleTypes} that will help the understanding of a procedure that describes the reduced decompositions. The general idea is to multiply the marked permutation $w_0 = (x_1,x_2,x_3, \dotsc, x_n)$ from the right by a sequence of reflections in order to obtain the identity matrix. A decomposition of $w_0$ is given by the product in reverse order of all the reflections used in the procedure. It turns out that this decomposition is reduced.

\begin{example}\label{ExampleRedDecomp1}\normalfont

Let $w = (2,\overline{1})(5,4,\overline{3})$ and  $w_0 = w(1,4) = (5,\overline{1},2,4,\overline{3})$ be as in Example~\ref{ExampleTypes}.
We follow the cycle $(5,\overline{1},2,4,\overline{3})$. The first two entries are $5$ and $\overline{1}$. We multiply $w_0$ from the right by the transposition $(1,5)$ and get $$w_1 = w_0(1,5) = \begin{small}\begin{pmatrix}
0 & -1 & 0 & 0 & 0\\
0 & 0 & 0 & 1 & 0\\
-1 & 0 & 0 & 0 & 0\\
0 & 0 & 1 & 0 & 0\\
0 & 0 & 0 & 0 & 1
\end{pmatrix}\end{small} = (\overline{1},2,4,\overline{3})(5),$$ 
and $5$ becomes a fixed point.

We continue by following the cycle $(\overline{1},2,4,\overline{3})$. The first two entries are $\overline{1}$ and $2$. We multiply $w_1$ from the right by the reflection $(\overline{1},\overline{2})$ and get $$w_2 = w_1(\overline{1},\overline{2}) = \begin{small}\begin{pmatrix}
1 & 0 & 0 & 0 & 0\\
0 & 0 & 0 & 1 & 0\\
0 & 1 & 0 & 0 & 0\\
0 & 0 & 1 & 0 & 0\\
0 & 0 & 0 & 0 & 1
\end{pmatrix}\end{small} = (2,4,3)(5)(1).$$ 
This operation permutes the columns $1$ and $2$ and multiplies their entries by $-1$. This then yields that $1$ becomes a fixed point.
Remark that by this operation, the third entry in $(2,4,3)(5)(1)$ is not overlined anymore.

We continue by following the cycle $(2,4,3)$. Here, the first two entries are $2$ and $4$. Then, we multiply $w_2$ from the right by the transposition $(2,4)$. We therefore obtain a coefficient equal to $1$ in diagonal position $[2,2]$. We get
 $$w_3 = w_2(2,4) = \begin{small}\begin{pmatrix}
1 & 0 & 0 & 0 & 0\\
0 & 1 & 0 & 0 & 0\\
0 & 0 & 0 & 1 & 0\\
0 & 0 & 1 & 0 & 0\\
0 & 0 & 0 & 0 & 1
\end{pmatrix}\end{small} = (4,3)(5)(1)(2).$$ 

Now, we arrive at the last step. We multiply $w_3$ from the right by the transposition $(3,4)$ and finally obtain the identity matrix denoted by $I_5 = (5)(1)(2)(4)(3)$.

This implies that $w_0 = (3,4)(2,4)(\overline{1},\overline{2})(1,5) = s_4(2,4)(\overline{1},\overline{2})(1,5)$. These are the reflections that we used before in reverse order. By Proposition \ref{PropLengthShi}, the length of $w_0$ is equal to $4$. Hence $(3,4)(2,4)(\overline{1},\overline{2})(1,5)$ is a reduced decomposition of $w_0$ as it consists of $4$ reflections. 
By Lemma~\ref{LemmaIntersectIndices}, the related decomposition diagram is described by the following standard type $A_4$ diagram.

\begin{center}
\begin{tikzpicture}

\node[draw, shape=circle, label=below:$s_4$] (1) at (0,0) {};
\node[draw, shape=circle, label=below:{$(2,4)$}] (2) at (2,0) {};
\node[draw, shape=circle, label=below:{$(\overline{1},\overline{2})$}] (3) at (4,0) {};
\node[draw, shape=circle, label=below:{$(1,5)$}] (4) at (6,0) {};

\draw[thick,-] (1) to (2);
\draw[thick,-] (3) to (2);
\draw[thick,-] (3) to (4);

\end{tikzpicture}
\end{center}

\end{example}

The general procedure is as follows.

\begin{procdur}\label{ProcedureTypeI}

Let $x = (x_1, x_2, \dotsc, x_r)$. For $k$ from $1$ to $r-1$,
\begin{itemize}
\item If $x_k= p$, then whether $x_{k+1} = q$ or $\overline{q}$, we multiply $(x_k,x_{k+1}, \dotsc, x_r)$ from the right by the reflection $(p,q)$ 
and get $(x_k)(x_{k+1},x_{k+2}, \dotsc, x_r)$,
whose length is one less than the length of $(x_k,x_{k+1}, \dotsc, x_r)$.
\item If $x_k = \overline{p}$, then whether $x_{k+1} = q$ or $\overline{q}$, we multiply $(x_k,x_{k+1}, \dotsc, x_r)$ from the right by the reflection $(\overline{p},\overline{q})$
and get $(x_k)(x_{k+1},x_{k+2}, \dotsc, \overline{x_r})$,
whose length is one less than the length of $(x_k,x_{k+1}, \dotsc, x_r)$.
\end{itemize}

\end{procdur}

Remark that the entry $\overline{x_r}$ can be equal to $\overline{\overline{u}}$ for $1 \leq u \leq n$ in which case it is equal to $u$, by the convention that $\overline{\overline{i}} = i$ for any positive integer $i$ (see Convention~\ref{ConventionNotations}).

\begin{proposition}\label{PropReducedDecompI}

Let $w_0$ be a divisor of $w$ of type I. 
A reduced decomposition of $w_0$ is obtained as the product in reverse order of the reflections that are applied in Procedure \ref{ProcedureTypeI}.

\end{proposition}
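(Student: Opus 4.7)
The proof splits naturally into three steps, one preparatory and two doing the real work.

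First, I would inspect Equations~\ref{Eqt1}--\ref{Eqt5} and observe that every type~I divisor $w_0$ is a single cycle of length $n$ with an even number of overlined entries. Applying Proposition~\ref{PropLengthShi} with $r=1$ and $e=1$ gives $\ell_{\Refl}(w_0) = n-1$. Consequently, to establish the proposition it is enough to check that Procedure~\ref{ProcedureTypeI} produces exactly $n-1$ reflections whose product taken in reverse order is $w_0$; reducedness then follows automatically from the length count.

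Second, and this is where the real work lies, I would verify by a case split on the overline status of $x_k$ and $x_{k+1}$ that each iteration of the procedure peels off $x_k$ as a genuine fixed point (coefficient $+1$ in row $x_k$, column $x_k$). In the \emph{unoverlined case} $x_k = p$, post-multiplying by the transposition $(p,q)$ swaps columns $p$ and $q$: row $p$, which had $+1$ in column $q$, now has $+1$ in column $p$; row $x_r$, whose entry was in column $p$, is moved to column $q$ with unchanged sign; all other rows are untouched, so the surviving cycle is exactly $(x_{k+1}, \dotsc, x_r)$. In the \emph{overlined case} $x_k = \overline{p}$, post-multiplying by the sign-changing reflection $(\overline{p}, \overline{q})$ (sending $e_p \mapsto -e_q$ and $e_q \mapsto -e_p$) replaces column $p$ by the negative of the old column $q$ and vice versa: row $p$, which had $-1$ in column $q$, now carries $(-1)(-1) = +1$ in column $p$, so $p$ is again a true fixed point; and row $x_r$, which had sign $\delta$ in column $p$, acquires sign $-\delta$ in column $q$, which is precisely the overline flip on $x_r$ recorded in the procedure. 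In either case, row $q$ is unaffected since $|x_{k+2}|\notin\{p,q\}$.

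Third, iterating this transformation $n-1$ times collapses the original $n$-cycle into the identity matrix, and writing $t_1, \dotsc, t_{n-1}$ for the reflections applied in order, we obtain $w_0\, t_1 t_2 \dotsc t_{n-1} = 1$. Since each $t_i$ is an involution, rearranging gives $w_0 = t_{n-1} t_{n-2} \dotsc t_1$, exactly the product in reverse order called for. Because this decomposition uses exactly $\ell_{\Refl}(w_0) = n-1$ reflections, it is automatically reduced. The main obstacle is not structural; it lies entirely in the sign bookkeeping of the case analysis in step two. Once the matrix action of post-multiplying by a reflection of either form $(p,q)$ or $(\overline{p},\overline{q})$ is made explicit, as illustrated in Example~\ref{ExampleRedDecomp1}, the remaining verification is direct.
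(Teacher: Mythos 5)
Your proposal is correct and follows essentially the same route as the paper's proof: establish $\ell_{\Refl}(w_0)=n-1$ via Proposition~\ref{PropLengthShi}, observe that Procedure~\ref{ProcedureTypeI} reduces the $n$-cycle to the identity in $n-1$ steps, and conclude that the reversed product of the $n-1$ reflections is a reduced decomposition. The only difference is that you spell out the sign bookkeeping justifying each step of the procedure, which the paper leaves implicit in the statement of Procedure~\ref{ProcedureTypeI} itself.
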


\begin{proof}
Let $w_0$ be a divisor of $w$ of type I. It is of the form $(x_1,x_2,x_3, \dotsc, x_n)$. 
Applying Procedure \ref{ProcedureTypeI} for all $k$ from $1$ to $n-1$, 
the element $(x_1,x_2,x_3, \dotsc, x_n)$ is transformed to the identity matrix 
$(x_1)(x_2) \dotsc (x_n)$ in $n-1$ steps. Since all reflections are of order 
$2$, a decomposition of the element $(x_1,x_2,x_3, \dotsc, x_n)$ is given by 
the product in reverse order of all the reflections used in this procedure.

Since $\ell(w_0) = n-1$ by Proposition \ref{PropLengthShi}, the decomposition we obtain is reduced (as it consists of $n-1$ reflections).
\end{proof}

\begin{proposition}\label{PropDiagTypeI}
	
	The decomposition diagram of each reduced decomposition corresponding to Type I is represented by a Coxeter diagram of type $A_{n-1}$, and $w_0$ is a Coxeter element in $P_{w_0}$. In particular, $w_0$ is a parabolic Coxeter element in $W$.
	
\end{proposition}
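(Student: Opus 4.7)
The plan is to read off the index structure of the reflections produced by Procedure~\ref{ProcedureTypeI} and feed it into Lemma~\ref{LemmaIntersectIndices} and Lemma~\ref{LemDynkinDiag}.

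First I would observe that since $w_0$ has Type I, its marked permutation is a single $n$-cycle $(x_1,x_2,\dotsc,x_n)$, so the underlying indices $|x_1|,|x_2|,\dotsc,|x_n|$ are a permutation of $\{1,2,\dotsc,n\}$, in particular they are pairwise distinct. By the description of Procedure~\ref{ProcedureTypeI}, the reflection $R_k$ produced at step $k$ (for $1\leq k\leq n-1$) is either $(|x_k|,|x_{k+1}|)$ or $(\overline{|x_k|},\overline{|x_{k+1}|})$; in either case, the set of indices that $R_k$ moves is exactly $\{|x_k|,|x_{k+1}|\}$. By Proposition~\ref{PropReducedDecompI}, $R_{n-1}R_{n-2}\dotsm R_1$ is a reduced reflection decomposition of $w_0$.

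Next I would determine the decomposition diagram using Lemma~\ref{LemmaIntersectIndices}. For $k<l$, the index sets of $R_k$ and $R_l$ are $\{|x_k|,|x_{k+1}|\}$ and $\{|x_l|,|x_{l+1}|\}$. If $l=k+1$ these sets share exactly the element $|x_{k+1}|$, while if $l>k+1$ they are disjoint since the $|x_i|$ are distinct. Hence by Lemma~\ref{LemmaIntersectIndices}, consecutive reflections $R_k,R_{k+1}$ satisfy the braid relation of order $3$ and non-consecutive ones commute. The resulting decomposition diagram is therefore exactly the path graph on $n-1$ vertices without any label, i.e., the Coxeter diagram of type $A_{n-1}$.

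Finally, since the decomposition diagram is a Coxeter diagram, Lemma~\ref{LemDynkinDiag} applies directly: it tells us that $w_0$ is a Coxeter element in $P_{w_0}=\langle R_1,\dotsc,R_{n-1}\rangle$, and that this diagram is the Carter diagram of its conjugacy class. Because $w_0$ is a left divisor of the quasi-Coxeter element $w$, Remark~\ref{RemParabolicQuasiCox} (combined with Lemma~\ref{LemParaSub}) ensures that $P_{w_0}$ is a parabolic subgroup of $W$, so $w_0$ is a parabolic Coxeter element in $W$. The argument is essentially routine once the index structure is in hand; the only thing to be careful about is the distinctness of the $|x_i|$, which is guaranteed precisely because Type I produces a single $n$-cycle.
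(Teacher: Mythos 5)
Your proof is correct and follows essentially the same route as the paper: it establishes that the decomposition diagram is a string (type $A_{n-1}$) and then invokes Lemma~\ref{LemDynkinDiag} together with Lemma~\ref{LemParaSub} for the Coxeter-element and parabolic conclusions. The only difference is that you spell out, via the index sets and Lemma~\ref{LemmaIntersectIndices}, the step the paper dismisses as an immediate consequence of Proposition~\ref{PropReducedDecompI}.
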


\begin{proof}
	It is an immediate consequence of Proposition~\ref{PropReducedDecompI} that the decomposition diagram of the reduced decomposition of $w_0$ produced in Procedure~\ref{ProcedureTypeI} is a string. Therefore, Lemmas~\ref{LemDynkinDiag} and \ref{LemParaSub} yield the assertion.
\end{proof}

We apply Proposition~\ref{PropReducedDecompI}
to establish decompositions of type I divisors $w_0$ of $w$, as described in the following lemmas.
 
\begin{lemma}\label{LemRedDecEqt1}
Let $w_0:=w(i,j)$ be a divisor of $w$ of type I, where $1\leq i\leq m$, $m+1\leq j \leq n$, so that we are in the situation of Equation~\ref{Eqt1}. Then $w_0$ has one of the following decompositions of length $n-1$ \mbox{over $\Refl$.}
\begin{eqnarray*}
(1)&& \hbox{\rm Suppose } i \neq m, j\neq n-2,n-1,n:\\
&&w_0 = s_{m+2} s_{m+3} \dotsc s_{j-1}s_j (i+1,j) s_{i+2} s_{i+3} \dotsc s_{m-1}s_m  (\overline{1},\overline{m}) s_2 s_3 \dotsc\\
&&\dotsc s_{i-1} s_i (i,j+1)s_{j+2} \dotsc s_{n-1}s_n.\\
(2)&&\hbox{\rm Suppose}\,i \neq m, j=n-2:\\
&& w_0 = s_{m+2} \dotsc s_{n-2}s_{n-1}(i+1,n-2)s_{i+2} 
\dotsc s_{m-1}s_m(\overline{1},\overline{m})s_2 \dotsc s_{i-1}s_i(i,n-1)s_n.\\
(3)&&\hbox{\rm Suppose }\,i =m, j=n-2:\\
&& w_0 = s_{m+2} \dotsc s_{n-3}s_{n-2}(\overline{1},\overline{n-2})
s_2 \dotsc s_{m-1}s_m(m,n-1)s_n.\\
(4)&&\hbox{\rm Suppose }\,i \neq m, j=n-1:\\
&& w_0 = s_{m+2} \dotsc s_{n-2}s_{n-1}(i+1,n-1)s_{i+2} 
\dotsc s_{m-1}s_m (\overline{1},\overline{m})s_2 \dotsc s_{i-1}s_i(i,n).\\
(5)&&{\rm Suppose }\,i =m, j=n-1:\\
&& w_0 = s_{m+2} \dotsc s_{n-2}s_{n-1}(\overline{1},\overline{n-1}) s_2 \dotsc s_{m-1}s_m(m,n).\\
(6)&&{\rm Suppose }\,i \neq m, j=n:\\
&& w_0 = s_{i+2} \dotsc s_{m-1}s_m (\overline{1},\overline{m}) s_2
\dotsc s_{i-1} s_i (\overline{i},\overline{m+1})s_{m+2} \dotsc s_{n-1}s_n.\\
(7)&&{\rm Suppose }\,i =m, j=n:\\
&&w_0 = s_2 \dotsc s_{m-1}s_m s_1 s_{m+2} \dotsc s_{n-1}s_n.
\end{eqnarray*}
\end{lemma}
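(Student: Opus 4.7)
The proof is mechanical, following the strategy behind Proposition~\ref{PropReducedDecompI}: for each of the seven cases, write out the cycle representation of $w_0 = w(i,j)$ given by Equation~\ref{Eqt1}, apply Procedure~\ref{ProcedureTypeI} starting at the appropriate entry, and reverse the resulting list of reflections.

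For the generic case~(1), where $i \neq m$ and $j \notin \{n{-}2,\,n{-}1,\,n\}$, the cycle of $w_0$ decomposes into four consecutive blocks of entries: $n,n{-}1,\dotsc,j{+}1$; $i,i{-}1,\dotsc,\overline{1}$; $m,m{-}1,\dotsc,i{+}1$; and $j,j{-}1,\dotsc,\overline{m{+}1}$. Starting Procedure~\ref{ProcedureTypeI} at the entry $n$ and traversing the blocks in order, each pair of consecutive within-block entries contributes a simple reflection $s_k = (k{-}1,k)$, while each of the three block-transitions contributes a single boundary reflection --- namely $(i,j{+}1)$ between blocks one and two; the overlined reflection $(\overline{1},\overline{m})$ between blocks two and three (which, per Procedure~\ref{ProcedureTypeI}, also flips the terminal entry $\overline{m{+}1}$ to $m{+}1$); and $(i{+}1,j)$ between blocks three and four. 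Reversing the full list of reflections yields precisely the formula claimed in~(1). The remaining cases~(2)--(7) are boundary specialisations in which some of the four blocks collapse or shrink: the extreme values $j \in \{n,n{-}1,n{-}2\}$ shorten (or empty) the first block, and $i = m$ collapses the third. In each I would rewrite the cycle using Convention~\ref{ConventionNotations}, rotate it so that Procedure~\ref{ProcedureTypeI} naturally starts at the entry $n$ (which, after rotation, may sit in what was the last block, as happens in case~(6)), and read off the reversed reflection list; the degeneracy causes the adjacent boundary reflections to merge with the surviving blocks in a predictable way, producing the distinct formulae~(2)--(7).

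Reducedness of each decomposition is automatic: every case produces exactly $n-1$ reflections, and by Proposition~\ref{PropLengthShi}, $\ell_\Refl(w_0) = n - e = n - 1$, since the single cycle of $w_0$ always has an even number of overlined entries, giving $e=1$. The main obstacle is the patient bookkeeping in the degenerate cases --- in particular when a block collapses to a single entry (for instance, the block $(j,j{-}1,\dotsc,\overline{m{+}1})$ when $j = m{+}1$) and the adjacent boundary reflection must take a specialised form. Verifying the pattern with a small numerical example such as $n=5,\,m=2$ or $n=6,\,m=3$ in each of the seven cases suffices to rule out any off-by-one errors at the boundaries.
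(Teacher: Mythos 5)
Your proposal is correct and takes essentially the same route as the paper, whose entire proof is the one-line citation ``apply Proposition~\ref{PropReducedDecompI} to Equation~\ref{Eqt1}''; your block-by-block bookkeeping, the handling of the sign flip at the $(\overline{1},\overline{m})$ transition, and the appeal to Proposition~\ref{PropLengthShi} for reducedness are exactly the content packed into that citation.
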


\begin{proof}
We apply
Proposition~\ref{PropReducedDecompI} to Equation \ref{Eqt1}.
\end{proof}

\begin{lemma}\label{LemRedDecEqt2}
Let $w_0=w(\overline{i},\overline{j})$
be a divisor of $w$ of type I,
where $1\leq i<m$, $m+1\leq j <n$, so that we are in the situation of Equation~\ref{Eqt2}.
Then $w_0$ has one of the following decompositions of length $n-1$ over $\Refl$.
\begin{eqnarray*}
(1)&& \hbox{\rm Suppose }\,i \neq m,\,j < n-2:\\
&&w_0 = s_{m+2} \dotsc s_{j-1}s_{j}(\overline{i+1},\overline{j})s_{i+2} \dotsc s_{m-1}s_m(\overline{1},\overline{m})s_2 \dotsc s_{i-1}s_{i}(\overline{i},\overline{j+1})s_{j+2} \dotsc s_n.\\
(2)&& \hbox{\rm Suppose }\,i \neq m,\,j = n-2:\\
&& w_0 = s_{m+2} \dotsc s_{n-3}s_{n-2}(\overline{i+1},\overline{n-2})s_{i+2} \dotsc s_{m-1}s_m(\overline{1},\overline{m})s_2 \dotsc 
s_{i-1}s_{i}(\overline{i},\overline{n-1}) s_n.\\
(3)&&\hbox{\rm Suppose }\,i \neq m,\,j = n-1:\\
&& w_0 = s_{m+2} \dotsc s_{n-2}s_{n-1}(\overline{i+1},\overline{n-1})s_{i+2} \dotsc s_{m-1}s_m(\overline{1},\overline{m})s_2 \dotsc s_{i-1}s_{i}(\overline{i},\overline{n}).
\end{eqnarray*}
\end{lemma}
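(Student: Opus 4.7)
The plan is to apply Proposition~\ref{PropReducedDecompI} directly to the single-cycle form of $w_0$ provided by Equation~\ref{Eqt2}, and then verify each of the three subcases by running Procedure~\ref{ProcedureTypeI} step by step on
\[
(n, n-1, \dotsc, \overline{j+1},\, i, i-1, \dotsc, \overline{1},\, m, m-1, \dotsc, \overline{i+1},\, j, j-1, \dotsc, \overline{m+1}).
\]
The reduced decomposition of $w_0$ is then the product, in reverse order, of the reflections produced by the procedure. Since this cycle contains exactly four overlined entries ($\overline{j+1}$, $\overline{1}$, $\overline{i+1}$, $\overline{m+1}$), Proposition~\ref{PropLengthShi} ensures $\ell_\Refl(w_0)=n-1$, so any length-$(n-1)$ factorisation produced in this way is automatically reduced.

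The core observation is that at every non-boundary step $k$, the pair $(x_k,x_{k+1})$ consists of consecutive positive integers in decreasing order $(p,p-1)$, both unoverlined, so Procedure~\ref{ProcedureTypeI} produces the Carter generator $s_p=(p-1,p)$. Only at the three boundary steps where $x_k \in \{\overline{j+1},\overline{1},\overline{i+1}\}$ do non-Carter reflections appear; these come out exactly as $(\overline{j+1},\overline{i})$, $(\overline{1},\overline{m})$, and $(\overline{i+1},\overline{j})$ respectively. Reading the resulting sequence in reverse, and using that $(\overline{a},\overline{b})=(\overline{b},\overline{a})$, yields precisely the expression claimed in case~(1).

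The remaining subcases are boundary phenomena in $j$. When $j=n-2$, the initial block $n, n-1, \dotsc, j+2$ reduces to the single entry $n$, so only the head reflection $s_n$ is produced (rather than the full string $s_n s_{n-1}\dotsc s_{j+2}$), and the middle non-Carter reflection becomes $(\overline{i+1},\overline{n-2})$; everything else is unchanged, giving case~(2). When $j=n-1$, Convention~\ref{ConventionNotations} collapses the initial segment $(n,\dotsc,\overline{j+1})=(n,\dotsc,\overline{n})$ to the single entry $\overline{n}$, so the procedure begins at the overlined step $k=1$ and produces $(\overline{n},\overline{i})$; the head string $s_n\dotsc s_{j+2}$ disappears entirely, while the tail $j,j-1,\dotsc,\overline{m+1}=n-1,n-2,\dotsc,\overline{m+1}$ now contributes the full string $s_{n-1}\dotsc s_{m+2}$, yielding case~(3).

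There is no real obstacle in the argument; the only point requiring care is the correct application of Convention~\ref{ConventionNotations} in the case $j=n-1$, where the overlined entry $\overline{n}$ becomes the very first symbol of the cycle and thus shifts the entire bookkeeping by one step. All three resulting expressions have length $n-1$ by inspection, matching $\ell_\Refl(w_0)$ and hence giving reduced decompositions over $\Refl$ as required.
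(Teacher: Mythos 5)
Your proposal is correct and follows essentially the same route as the paper, whose proof consists precisely of applying Proposition~\ref{PropReducedDecompI} (i.e.\ Procedure~\ref{ProcedureTypeI}) to the single cycle of Equation~\ref{Eqt2}; you have simply carried out the bookkeeping explicitly, including the correct handling of the boundary cases $j=n-2$ and $j=n-1$ via Convention~\ref{ConventionNotations}. The identification of the three non-Carter reflections $(\overline{i},\overline{j+1})$, $(\overline{1},\overline{m})$, $(\overline{i+1},\overline{j})$ and the reducedness argument via Proposition~\ref{PropLengthShi} both match the paper's reasoning.
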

\begin{proof}
We apply Proposition~\ref{PropReducedDecompI}
to Equation \ref{Eqt2}.
\end{proof}

\begin{lemma}\label{LemRedDecEqt3}
Let $w_0=w(\overline{m},\overline{j})$
be a divisor of $w$ of type I,
where $m+1\leq j <n$, so that we are in the situation of Equation~\ref{Eqt3}.
Then $w_0$ has one of the following decompositions of length $n-1$ over $\Refl$.
\begin{eqnarray*}
(1)&&\hbox{\rm Suppose }\,j < n-2:\\
&& w_0 = s_{m+2} \dotsc s_{j-1}s_{j}(1,j) \dotsc s_{m-1}s_m(\overline{m},\overline{j+1})s_{j+2} \dotsc s_{n-1}s_n.\\
(2)&& \hbox{\rm Suppose }\,j = n-2:\\
& & w_0 = s_{m+2} \dotsc s_{n-3}s_{n-2}(1,n-2)s_2 \dotsc s_{m-1}s_{m}(\overline{m},\overline{n-1})s_n.\\
(3)&&\hbox{\rm Suppose }\,j = n-1:\\
&& w_0 = s_{m+2} \dotsc s_{n-2}s_{n-1}(1,n-1)s_2 \dotsc s_{m-1}s_{m}(\overline{m},\overline{n}).  
\end{eqnarray*}
\end{lemma}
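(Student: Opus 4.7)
The plan is to apply Procedure~\ref{ProcedureTypeI} directly to the marked cycle
$(n, n-1, \dotsc, \overline{j+1}, m, m-1, \dotsc, 1, j, j-1, \dotsc, \overline{m+1})$
from Equation~\ref{Eqt3}, exactly in parallel with the treatment of Equations~\ref{Eqt1} and \ref{Eqt2} in the two preceding lemmas, and then invoke Proposition~\ref{PropReducedDecompI} to certify that the resulting product of reflections (read in reverse order of application) is a reduced decomposition. The three subcases correspond to the length of the initial descending head $n, n-1, \dotsc, \overline{j+1}$ of the cycle: a genuine block of at least two entries when $j<n-2$, a single pair when $j=n-2$, and the degenerate case $j=n-1$ where Convention~\ref{ConventionNotations} forces the head to collapse to the lone entry $\overline{n}$.

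I would run the procedure in three stages. First, process the positive descending head $n, n-1, \dotsc, j+2, \overline{j+1}$ using only case~(1) steps; this peels off fixed points one at a time and contributes the reflections $s_n, s_{n-1}, \dotsc, s_{j+2}$. Second, the step at the pair $(\overline{j+1}, m)$ is the unique case~(2) step: it contributes the reflection $(\overline{m},\overline{j+1})$ and, crucially, toggles the overline on the terminal entry $\overline{m+1}$ to $m+1$, which is exactly what allows every subsequent step to fall under case~(1). Third, process the tail cycle $(m, m-1, \dotsc, 1, j, j-1, \dotsc, m+1)$ using a block of simple reflections $s_m, s_{m-1}, \dotsc, s_2$, then the single non-simple reflection $(1,j)$, and finally the block $s_j, s_{j-1}, \dotsc, s_{m+2}$.

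A straightforward count gives $(n-j-1)+1+(m-1)+1+(j-m-1)=n-1$ reflections in total, which agrees with $\ell_\Refl(w_0)$ as computed from Proposition~\ref{PropLengthShi}, so reversing the order of the applied reflections yields the asserted reduced decomposition. I do not expect any real obstacle; the only point deserving care is the boundary case $j=n-1$, where one must first invoke Convention~\ref{ConventionNotations} to replace the formal expression $n, \overline{n}$ at the start of the cycle by the single entry $\overline{n}$ before starting the procedure. This collapse is precisely what explains why no trailing $s_\ast$ factor appears on the right end of the decomposition in case~(3), while the isolated factor $s_n$ in case~(2) and the full block $s_{j+2}\dotsb s_n$ in case~(1) come directly from the head-processing stage.
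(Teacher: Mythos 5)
Your proposal is correct and takes essentially the same approach as the paper, whose entire proof is the one-line remark that Proposition~\ref{PropReducedDecompI} is applied to Equation~\ref{Eqt3}; you have simply spelled out the bookkeeping of Procedure~\ref{ProcedureTypeI} on that cycle, including the correct handling of the $j=n-1$ collapse via Convention~\ref{ConventionNotations}. (A minor quibble: the overline toggle at the $(\overline{j+1},m)$ step is not what makes the later steps fall under case~(1) --- that follows from the later current entries being unmarked --- rather, it is what guarantees the procedure terminates at the identity matrix; this does not affect the validity of your argument.)
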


\begin{proof}
We apply Proposition~\ref{PropReducedDecompI} to Equation \ref{Eqt3}.
\end{proof}

Similarly, we get the next result.

\begin{lemma}\label{LemRedDecEqt45}

Let $w_0$ be as in the situation of Equations~\ref{Eqt4} and \ref{Eqt5}. Then $w_0$ has the following decompositions of length $n-1$ over $\Refl$.
\begin{eqnarray*}
	(1)&&\hbox{\rm  Suppose $i \neq m$ and $j = n$, so that we are in the situation of Equation~\ref{Eqt4}:}\\
	&& w_0 = s_{i+2} \dotsc s_{m-1}s_m(\overline{1},\overline{m})s_2 \dotsc s_{i-1}s_i (i,m+1) s_{m+2} \dotsc s_{n-1}s_n\\
	(2)&& \hbox{\rm Suppose $i = m$ and $j = n$, so that we are in the situation of Equation~\ref{Eqt5}:}\\
	& & w_0 = s_2 s_3 \dotsc s_n.
\end{eqnarray*}
\end{lemma}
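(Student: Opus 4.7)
The plan is to invoke Proposition~\ref{PropReducedDecompI} directly, tracing Procedure~\ref{ProcedureTypeI} through the specific cycles given by Equations~\ref{Eqt4} and~\ref{Eqt5}. The sequence of reflections that the procedure outputs, read in reverse order, will be the reduced decomposition; reducedness is automatic since Proposition~\ref{PropLengthShi} gives $\ell_{\Refl}(w_0)=n-1$ for every type~I divisor.

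For case (2), where $w_0=(n,n-1,\dotsc,1)$ is an unoverlined $n$-cycle, each step of Procedure~\ref{ProcedureTypeI} sees two leading consecutive unoverlined entries $k+1, k$, so one multiplies by the simple reflection $s_{k+1}=(k,k+1)$ and sheds a fixed point. The reflections used, in order, are $s_n, s_{n-1}, \dotsc, s_2$, and their reverse gives $w_0 = s_2 s_3 \dotsc s_n$, as claimed.

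For case (1), I would traverse the cycle $(n, n-1, \dotsc, m+1, i, i-1, \dotsc, \overline{1}, m, m-1, \dotsc, \overline{i+1})$ in three phases, matching its three natural segments. The descending block $n, n-1, \dotsc, m+1$ consumes $s_n, s_{n-1}, \dotsc, s_{m+2}$. The jump from $m+1$ to $i$ forces the non-simple reflection $(m+1,i)=(i,m+1)$. The block $i, i-1, \dotsc, 2, \overline{1}$ consumes $s_i, s_{i-1}, \dotsc, s_2$. At the transition from $\overline{1}$ to $m$, since the leading entry is overlined, the procedure prescribes the reflection $(\overline{1},\overline{m})$ and simultaneously flips the sign of the trailing entry $\overline{i+1}$ to $\overline{\overline{i+1}} = i+1$ by Convention~\ref{ConventionNotations}. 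The residual cycle $(m, m-1, \dotsc, i+1)$ is then unoverlined and consumes $s_m, s_{m-1}, \dotsc, s_{i+2}$. Reversing the total sequence of reflections yields exactly the factorisation stated in the lemma.

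The only real pitfall is the bookkeeping at the $\overline{1}\to m$ transition, where the overline rule of Procedure~\ref{ProcedureTypeI} triggers the sign flip of the trailing entry; everything else is routine index-chasing. A final check that the reflection count $(n-m-1)+1+(i-1)+1+(m-i-1)=n-1$ matches $\ell_{\Refl}(w_0)$ certifies reducedness via Proposition~\ref{PropLengthShi}, closing the argument.
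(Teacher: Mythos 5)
Your proposal is correct and is exactly the paper's argument: the paper proves this lemma (implicitly, via ``Similarly, we get the next result'') by applying Proposition~\ref{PropReducedDecompI}, i.e.\ running Procedure~\ref{ProcedureTypeI} on the cycles of Equations~\ref{Eqt4} and~\ref{Eqt5} and reversing the resulting sequence of reflections, with reducedness supplied by Proposition~\ref{PropLengthShi}. Your bookkeeping at the $\overline{1}\to m$ transition (the reflection $(\overline{1},\overline{m})$ together with the sign flip of the trailing entry $\overline{i+1}$) is the correct application of the overline rule, and the reflection count confirms the factorisation.
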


\subsection{Reduced decompositions and diagrams for types II and III}\label{SubsectionRedDecompII} 

In this section, we find reduced decompositions for the maximal divisors $w_0$ of $w$ that are of types II and III. They are listed in Equations~\ref{Eqt6} to \ref{Eqt11}.

We define a combinatorial technique that enables us to obtain a reduced 
decomposition, whose decomposition diagram $\Delta_0$ is the union of 
 Coxeter diagrams of type $A$ or $D$, or a proper Carter diagram of type $D$. 

First, observe that each element $w_0$ defined in  one of the 
Equations~\ref{Eqt6}--~\ref{Eqt11} is the product of three cycles. 
Since $\ell(w_0) = n-1$, by Proposition \ref{PropLengthShi}  each $w_0$ 
admits exactly one cycle with an even number of overlined elements. The other two cycles contain an odd 
number of overlined elements. Observing these equations, we recognise that 
these cycles contain exactly one overlined element at the end of the cycle. 
Assume that the cycles are $x:= (x_1,x_2, \dotsc, x_p), y:= (y_1, y_2, \dotsc, y_q), 
z:= (z_1, z_2, \dotsc, z_r)$ such that an even number of entries of 
$(z_1, z_2, \dotsc, z_r)$ are overlined, $x_p = \overline{u}$, and 
$y_q = \overline{v}$. Also, observe that we always have $p+q+r = n$.

The combinatorial technique is based on Procedure \ref{ProcedureTypeI}. We formulate it in the following procedure.

\begin{procdur}\label{ProcedureTypesII,III}
\noindent
\begin{itemize}
\item[Step 1.] If $r \geq 2$, then apply Procedure \ref{ProcedureTypeI} to the cycle $(z_1, z_2, \dotsc, z_r)$. We obtain $(z_1)(z_2) \dotsc (z_r)$.

\item[Step 2.] If $p \geq 2$, then apply Procedure \ref{ProcedureTypeI} to the cycle $(x_1,x_2, \dotsc, x_p)$. We obtain $(x_1)(x_2) \dotsc (x_{p-1})(\overline{u})$.

\item[Step 3.] If $q \geq 2$, the apply Procedure \ref{ProcedureTypeI} to the cycle $(y_1,y_2, \dotsc, y_q)$. We obtain $(y_1)(y_2) \dotsc (y_{q-1})(\overline{v})$.
\end{itemize}

\end{procdur}

Furthermore, we impose an additional condition: If one of the two cycles $(x_1,x_2, \dotsc, x_p)$, $(y_1,y_2, \dotsc, y_q)$ contains $n$, then Step 2 deals with the cycle that contains $n$.

\begin{proposition}\label{PropReducedDecompII,III}
Let $w_0$ be a divisor of $w$ of type II or III. We continue with the notations 
introduced at the beginning of the section. Without loss of generality, 
assume that $u<v$. A reduced decomposition of $w_0$ is obtained as the 
product $(u,v) (\overline{u},\overline{v})$ followed by the reflections used in Procedure \ref{ProcedureTypesII,III} in reverse order.
\end{proposition}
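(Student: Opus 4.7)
The plan is direct verification: I would compute the successive right-products $w_0, w_0 r_1, w_0 r_1 r_2, \dotsc$, as the reflections $r_1,\dotsc, r_{n-3}$ produced by Procedure~\ref{ProcedureTypesII,III} are multiplied on the right, and show that the final element equals $(u,v)(\overline{u},\overline{v})$. Since each $r_i$ is an involution, this immediately yields $w_0 = (u,v)(\overline{u},\overline{v})\cdot r_{n-3} r_{n-4} \dotsc r_1$, which has the shape claimed by the proposition. Reducedness then follows from a length count via Proposition~\ref{PropLengthShi}.

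For the residue computation, the supports of the three cycles $x$, $y$, $z$ partition $\{1,\dotsc,n\}$, and each step of Procedure~\ref{ProcedureTypeI} only touches indices inside one cycle, so the three cycles can be analysed independently. The $z$-cycle has an even number of overlined entries; inspecting Procedure~\ref{ProcedureTypeI} step by step, the overlined-leading-entry branch kills the leading entry as an un-overlined fixed point while toggling the overline on the terminal entry, so overlines cancel in pairs and the cycle reduces to the $r$ un-overlined fixed points $(z_1)(z_2)\dotsc(z_r)$ after $r-1$ reflections (this is essentially Proposition~\ref{PropReducedDecompI} applied to the $z$-cycle in isolation). For cycles $x$ and $y$, after (if necessary) cyclically rotating so that the unique overline sits at position $p$ and $q$ respectively, every interior entry is un-overlined; Procedure~\ref{ProcedureTypeI} then uses only the un-overlined branch at every step and leaves the terminal overline untouched, so these cycles reduce to $(x_1)\dotsc(x_{p-1})(\overline{u})$ and $(y_1)\dotsc(y_{q-1})(\overline{v})$ in $p-1$ and $q-1$ reflections respectively.

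Combining the three outputs, the residue is the monomial matrix with $-1$ at diagonal entries $u$ and $v$ and $+1$ elsewhere, and a direct computation (writing out $(u,v)$ and $(\overline{u},\overline{v})$ as monomial matrices and multiplying) shows that $(u,v)(\overline{u},\overline{v})$ equals precisely this diagonal matrix. Thus $w_0 = (u,v)(\overline{u},\overline{v})\cdot r_{n-3}\dotsc r_1$, a product of $2 + (n-3) = n-1$ reflections; Proposition~\ref{PropLengthShi} gives $\ell_{\Refl}(w_0) = n-1$ (since exactly one of the three cycles has an even number of overlines, so $e=1$), and the decomposition is therefore reduced.

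I expect the most delicate bookkeeping to be the overline-propagation in the $z$-cycle: one must verify that the overlined-leading branch of Procedure~\ref{ProcedureTypeI} genuinely cancels overlines in pairs rather than accumulating them at the tail. This can be checked concretely by interpreting right-multiplication by $(\overline{p},\overline{q})$ as swapping columns $p$ and $q$ and flipping their signs, which alters the signs of exactly two nonzero entries lying in those columns (the one in row $p$, which becomes the new fixed point, and the one in row $|x_r|$, which toggles the terminal overline).
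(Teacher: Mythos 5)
Your proposal is correct and follows essentially the same route as the paper: track the residue of $w_0$ under the procedure's right-multiplications, identify it with the diagonal matrix having $-1$ in positions $u$ and $v$ (equivalently $(u,v)(\overline{u},\overline{v})$), invert, and conclude reducedness from the count $(p-1)+(q-1)+(r-1)+2=n-1$ together with Proposition~\ref{PropLengthShi}. The only difference is that you spell out the overline-cancellation bookkeeping in the $z$-cycle and the sign analysis of column operations, which the paper's proof leaves implicit in the statement of Procedure~\ref{ProcedureTypesII,III}; this is a harmless (indeed welcome) elaboration rather than a different argument.
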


\begin{proof}
After application of Procedure \ref{ProcedureTypesII,III}, the monomial 
matrix $w_0$ is transformed to the diagonal matrix with diagonal coefficients 
equal to $1$ everywhere apart from diagonal positions $[u,u]$ and $[v,v]$, 
where the two coefficients are equal to $-1$. Multiplying this diagonal 
matrix by $(u,v)(\overline{u},\overline{v})$, it is transformed to the 
identity matrix. A decomposition of $w_0$ is therefore the product of 
$(u,v)$ by $(\overline{u},\overline{v})$ followed by the reflections used 
in Procedure \ref{ProcedureTypesII,III} in reverse order.

The decomposition is reduced if its length is equal to $n-1$. 
In the first step of Procedure \ref{ProcedureTypesII,III}, the number of 
reflections that have been used is equal to $r-1$, while $p-1$ and $q-1$ reflections are used in each of Steps 2 and 3. In addition, we multiplied at the end by two reflections: $(u,v)$ and $(\overline{u},\overline{v})$. Therefore, the number of reflections used in this decomposition is $(r-1) + (p-1) + (q-1) +2 = (r+p+q)-1 = n-1$.
\end{proof}

We explain Procedure \ref{ProcedureTypesII,III} and Proposition \ref{PropReducedDecompII,III} in the following two examples. The first example corresponds to type II and the second to type III.

\begin{example}\label{ExampleII}\normalfont

We continue with our running Example~\ref{ExampleTypes}, so $n=5$, $m=2$ and $w = (2,\overline{1})(5,4,\overline{3})$. Let $w_0 = w(1,2) = (\overline{1})(2)(5,4,\overline{3})$, whose cycle decomposition 
is given in Equation \ref{Eqt6}.

We apply Procedure \ref{ProcedureTypesII,III} to $w_0$.

\textit{Step 1.} The even cycle is $(2)$ and corrsponds to $r=1$ in Procedure~\ref{ProcedureTypesII,III}. Step 1 does not apply and we move to Step 2.

\textit{Step 2.} The cycle containing a unique overlined entry and containing $n=5$ is $(5,4,\overline{3})$. Here, we have $p=3$. Then, this step applies, and we execute Procedure \ref{ProcedureTypeI} to the cycle $(5,4,\overline{3})$. Therefore, we first multiply $(5,4,\overline{3})$ by $(4,5)$  and then the result  by $(3,4)$ from the right, and  get $(5,4,\overline{3}) (4,5)(3,4) =  (5)(4)(\overline{3})$. We set $w_2:=w_0(4,5)(3,4)$.

\textit{Step 3.} The second cycle contains a unique overlined entry $(\overline{1})$. In this case, we have $q=1$. Therefore, we do not modify $w_2 = (\overline{1})(2)(5)(4)(\overline{3})$ in Step~3. 

By Proposition \ref{PropReducedDecompII,III}, we multiply $w_2$ from the right by $(1,3)(\overline{1},\overline{3})$ so that it is transformed to the identity matrix. A reduced decomposition of $w_0$ is then obtained by 
adding to $(1,3)(\overline{1},\overline{3})$ the reflections used in 
Procedure \ref{ProcedureTypesII,III} in reverse order. 
Hence we obtain $w_0 = (1,3)(\overline{1},\overline{3})(3,4)(4,5)$.

The decomposition diagram associated to this reduced decomposition is a Coxeter diagram of type $D_4$. This is readily checked by Lemma \ref{LemmaIntersectIndices}. The diagram is then the following.

\begin{center}\begin{tikzpicture}
\node[draw, shape=circle, label=below:{$(3,4)$}] (1) at (0,0) {};
\node[draw, shape=circle, label=below:{$(4,5)$}] (2) at (1,0) {};
\node[draw, shape=circle, label=above:{$(1,3)$}] (3) at (-1,0.6) {};
\node[draw, shape=circle, label=below:{$(\overline{1},\overline{3})$}] (4) at (-1,-0.6) {};
\draw[thick,-] (1) to (2);
\draw[thick,-] (1) to (3);
\draw[thick,-] (4) to (1);
\end{tikzpicture}\end{center}

The element $w_0$ is therefore a Coxeter element in the subgroup generated by the reflections $(1,3), (\overline{1},\overline{3}), (3,4), (4,5)$ that compose the reduced decomposition.

\end{example}

\begin{example}\label{ExampleIII}\normalfont

Let $n=6$ and $m=2$. Consider the proper quasi-Coxeter element $w = (2,\overline{1})(6,5,4,\overline{3})$ and $w_0 = w(\overline{3},\overline{6})$. By Equation \ref{Eqt11},  it is $w_0 = (2,\overline{1})(3)(6,5,\overline{4})$.

We apply now Procedure \ref{ProcedureTypesII,III}. The cycle $(3)$ contains only one element.  So, we move to Step~2.

\textit{Step 2.} We apply Procedure \ref{ProcedureTypeI} to the cycle $(6,5,\overline{4})$ that contains $n=6$. We obtain  $(6,5,\overline{4}) (5,6)(4,5) =  (6)(5)(\overline{4})$.
We set $w_2:= w_0 (5,6)(4,5)$.

\textit{Step 3.} We apply Procedure \ref{ProcedureTypeI} to the cycle $(2,\overline{1})$, and get 
$w_3 :=w_2(1,2) =$ $ (2)(\overline{1})(3)(6)(5)(\overline{4})$. 
By Proposition \ref{PropReducedDecompII,III}, a reduced decomposition of 
$w_0$ is obtained by adding to $(1,4)(\overline{1},\overline{4})$ the 
reflections used in Procedure \ref{ProcedureTypesII,III} in reverse order. Therefore, we obtain $w_0 = (1,4)(\overline{1},\overline{4})(1,2)(4,5)(5,6)$.

By Lemma \ref{LemmaIntersectIndices}, the decomposition diagram associated with this reduced decomposition is a proper Carter diagram of type $D_5$. The diagram is the following.

\begin{center}\begin{tikzpicture}
\node[draw, shape=circle, label=below:{$(1,2)$}] (5) at (5,0) {};
\node[draw, shape=circle, label=above:{$(1,4)$}] (6) at (6,1) {};
\node[draw, shape=circle, label=below:{$(\overline{1},\overline{4})$}] (7) at (6,-1) {};
\node[draw, shape=circle,label=below:{$(4,5)$}] (8) at (7,0) {};
\node[draw, shape=circle,label=below:{$(5,6)$}] (9) at (8.3,0) {};
\draw[thick,-] (5) to (6);
\draw[thick,-] (5) to (7);
\draw[thick,-] (8) to (6);
\draw[thick,-] (8) to (7);
\draw[thick,-] (8) to (9);
\end{tikzpicture}\end{center}

We will show later that the element $w_0 = (1,4)(\overline{1},\overline{4})(1,2)(4,5)(5,6)$ 
is a proper quasi-Coxeter element in the subgroup generated by the reflections
$(1,4)$, $(\overline{1}$, $\overline{4})$, $(1,2)$, $(4,5)$, $(5,6)$ that
compose the reduced decomposition.

\end{example}

Now, we characterise the diagrams $\Delta_0$ of the reduced decompositions, and whether the elements $w_0$ are Coxeter
or proper quasi-Coxeter elements in the subgroup $P_{w_0}$ generated by the reflections
that compose the reduced decomposition. In fact, Procedures~\ref{ProcedureTypeI}
and \ref{ProcedureTypesII,III} are tailored in order to obtain a Coxeter
diagram of type $A$ or $D$, or a proper Carter diagram of type $D$.
Observe first the following. We continue to use the notation of $x,y$ and $z$ introduced at the beginning 
of this section.

\begin{lemma}\label{LemParaQuasiCox}
The elements $xy$ and $z$ are parabolic quasi-Coxeter elements in $W$.
\end{lemma}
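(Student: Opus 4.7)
The plan is to show that $xy$ and $z$ both belong to the interval $[1,w]$; once this is established, the lemma follows immediately from Remark~\ref{RemParabolicQuasiCox}, which asserts that every element of the interval of a quasi-Coxeter element is a parabolic quasi-Coxeter element.

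First I would observe that the three cycles $x$, $y$, $z$ have pairwise disjoint supports of sizes $p$, $q$, $r$ with $p+q+r=n$, so they pairwise commute and hence $w_0 = (xy)\cdot z$. Next I would compute $\ell_\Refl(xy)$ and $\ell_\Refl(z)$ by applying Proposition~\ref{PropLengthShi} to $xy$ and $z$ viewed as elements of $W$. The marked permutation $\sigma_{xy}$ consists of the two cycles $x$ and $y$, each carrying a single (odd) overlined entry, together with $n-p-q$ trivial one-cycles coming from the coordinates fixed by $xy$, each having zero (hence even) overlined entries; this gives $\ell_\Refl(xy) = n - (n-p-q) = p+q$. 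Similarly, $\sigma_z$ contributes the cycle $z$ itself (even, by definition) plus $n-r$ further even one-cycles, yielding $\ell_\Refl(z) = n - (1+(n-r)) = r-1$. Adding these, $\ell_\Refl(xy) + \ell_\Refl(z) = p+q+r-1 = n-1 = \ell_\Refl(w_0)$, so the factorisation $w_0 = (xy)\cdot z$ witnesses both $xy \preceq w_0$ and $z \preceq w_0$. Finally, since $\preceq$ is transitive and $w_0 \preceq w$ by assumption, both $xy$ and $z$ lie in $[1,w]$, and Remark~\ref{RemParabolicQuasiCox} finishes the argument.

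There is no substantial obstacle to this proof; the only point that requires some care is the bookkeeping when applying Proposition~\ref{PropLengthShi} to $xy$ and $z$ viewed inside the full group $W$, since one must remember that each coordinate fixed by the element contributes an (even) one-cycle that enters the value of~$e$ in the length formula.
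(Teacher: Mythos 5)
Your proof is correct and follows essentially the same route as the paper's: both establish that $xy$ and $z$ divide $w_0$ (hence $w$) via the length additivity $\ell_\Refl(xy)+\ell_\Refl(z)=\ell_\Refl(w_0)$, and then invoke Corollary~6.11 of \cite{BaumGobet} (the content of Remark~\ref{RemParabolicQuasiCox}). The only cosmetic difference is that you compute the two lengths directly from Shi's formula (Proposition~\ref{PropLengthShi}), whereas the paper reads the additivity off the explicit reduced decomposition of Proposition~\ref{PropReducedDecompII,III}.
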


\begin{proof}
By Proposition~\ref{PropReducedDecompII,III}, we have $\ell_T(z) + \ell_T(w_0z^{-1}) = \ell_T(w_0)$ which yields 
$z, xy \preceq w_0 \preceq w$. Therefore $z$ as well as $xy$ are parabolic quasi-Coxeter elements in $W$, see Corollary~6.11 in \cite{BaumGobet}.
\end{proof}

\begin{proposition}\label{PropCoxDiagTypeII,III}

Let $w_0$ be a divisor of $w$ of type II or III. Let $x= (x_1,x_2, \dotsc, x_p), y= (y_1, y_2, \dotsc, y_q)$ and 
$z= (z_1, z_2, \dotsc, z_r)$ be as introduced before.
Consider the reduced decomposition of
$w_0$ as described in Procedure~\ref{ProcedureTypesII,III} and
Proposition~\ref{PropReducedDecompII,III}.
\begin{itemize}
\item If $p$ and $q$ are equal to $1$, then the diagram of the reduced 
decomposition is the disjoint union of a diagram of type $A_{r-1}$ and
two nodes.
\item If $p=1$ and $q=2$ (or $q=1$ and $p=2$), then the diagram of the 
reduced decomposition is a disjoint union of a type $A_3$ and type $A_{r-1}$ diagrams.
\item If $p=1$ and $q > 2$ (or $q=1$ and $p>2$), then the diagram of the 
reduced decomposition is a disjoint union of a Coxeter diagram of type
$D_{q+1}$ and a Coxeter diagram of type $A_{r-1}$ (or a disjoint union of
diagrams of types $D_{p+1}$ and $A_{r-1}$, respectively).
\end{itemize}
In all these  cases, the element $w_0$ is a Coxeter element in $P_{w_0}$ and therefore a parabolic Coxeter element in $W$.
\end{proposition}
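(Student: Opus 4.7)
The plan is to read the decomposition diagram of $w_0$ directly off the reflection decomposition produced by Procedure~\ref{ProcedureTypesII,III}, using Lemma~\ref{LemmaIntersectIndices} to decide which pairs of reflections commute, and then to apply Lemma~\ref{LemDynkinDiag} together with Remark~\ref{RemParabolicQuasiCox} and Lemma~\ref{LemParaSub} to conclude.

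First I would label the constituent reflections. Procedure~\ref{ProcedureTypeI} applied to the even-overlines cycle $z=(z_1,\dotsc,z_r)$ produces $r-1$ reflections whose consecutive pairs share exactly one unsigned index and whose non-consecutive pairs share none; by Lemma~\ref{LemmaIntersectIndices} they form a chain of type $A_{r-1}$. The unsigned indices involved all lie in $\{z_1,\dotsc,z_r\}$, which is disjoint from the unsigned indices used by every other reflection in the decomposition, so this $A_{r-1}$ is a connected component of the full decomposition diagram.

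Next I would analyse the remaining reflections. When $q\geq 2$, Procedure~\ref{ProcedureTypeI} applied to $y=(y_1,\dotsc,y_{q-1},\overline{v})$ yields $q-1$ reflections forming an $A_{q-1}$ chain whose last entry $(y_{q-1},v)$ is the only one involving the index $v$. To this we adjoin the two joining reflections $(u,v)$ and $(\overline{u},\overline{v})$; they share both indices $u,v$, and a short matrix calculation shows that their product is the diagonal matrix with entries $-1$ in rows $u$ and $v$, so they commute and there is no edge between them. When $p=1$ the index $u$ appears nowhere else in the decomposition, so each joining reflection connects to the remainder of the diagram only through $v$. The three claimed shapes now fall out: if $q=1$ the $y$-chain is empty and the two joining reflections are isolated, producing $A_{r-1}$ together with two extra isolated nodes; if $q=2$ the single $y$-reflection $(y_1,v)$ shares $v$ with both joining reflections, producing the three-vertex path $A_3$; if $q>2$ the chain is joined at its endpoint $(y_{q-1},v)$ by two pendant vertices, forming a fork on $q+1$ vertices, i.e.\ a diagram of type $D_{q+1}$. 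The symmetric cases where $q=1$ and $p\geq 2$ are obtained by interchanging the roles of $x$ and $y$.

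Finally, in every case the decomposition diagram is a disjoint union of Coxeter diagrams of types $A$ and $D$. Lemma~\ref{LemDynkinDiag} then yields that $w_0$ is a Coxeter element in $P_{w_0}=\langle \refl_1,\dotsc,\refl_{n-1}\rangle$, while Remark~\ref{RemParabolicQuasiCox} gives that $w_0$ is a parabolic quasi-Coxeter element of $W$, so by Lemma~\ref{LemParaSub} the subgroup $P_{w_0}$ is a parabolic subgroup of $W$, and therefore $w_0$ is a parabolic Coxeter element in $W$. The only slightly delicate step in the whole plan is the verification that $(u,v)$ and $(\overline{u},\overline{v})$ commute; the remaining commutation analysis is an immediate application of Lemma~\ref{LemmaIntersectIndices}.
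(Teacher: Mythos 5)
Your proposal is correct and follows essentially the same route as the paper: identify the decomposition diagram as a disjoint union of type $A$ and type $D$ Coxeter diagrams by tracking which reflections share indices, then invoke Lemma~\ref{LemDynkinDiag} to conclude that $w_0$ is a Coxeter element in $P_{w_0}$, with Remark~\ref{RemParabolicQuasiCox} and Lemma~\ref{LemParaSub} supplying the parabolic statement. The only organizational difference is that the paper factors $w_0$ as the commuting product $xy\cdot z$ and treats the two pieces separately, whereas you analyse the whole diagram at once and supply the detail (including the commutation of $(u,v)$ and $(\overline{u},\overline{v})$, which indeed falls outside Lemma~\ref{LemmaIntersectIndices}) that the paper leaves as ``straightforward to check.''
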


\begin{proof}

By Lemma~\ref{LemParaQuasiCox}, $xy$ as well as $z$ are parabolic quasi-Coxeter elements in $W$. Here we prove that $xy$ and $z$ are Coxeter elements in $P_{xy}$
and $P_z$, respectively.

By Proposition~\ref{PropReducedDecompII,III} and Lemma~\ref{LemDynkinDiag} the cycle $z$ is a Coxeter element of type $A_{r-1}$  in $P_z$. As $xy$ and $z$ are disjoint cycles, the two elements commute.

If $p = q = 1$, then $w_0$  equals $(u,v)(\overline{u},\overline{v}) z$. As $(u,v)$ and $(\overline{u},\overline{v})$ commute, the first bullet of the proposition follows.

If $p = 1$ and $q \geq 2$, then  it is straightforward to check that the decomposition diagram of the reduced decomposition of $xy$ 
given in Proposition~\ref{PropReducedDecompII,III} is a Coxeter diagram of type $D_{q+1}$. Thus, by Lemma~\ref{LemDynkinDiag} 
$xy$ is a Coxeter element of type $D_{q+1}$ in $P_{xy}$.
This yields the other two bullets, as $A_3 = D_3$.
\end{proof}

\begin{proposition}\label{PropQCoxDiagTypeII,III}
	Suppose $p,q \geq 2$. Then the decomposition diagram of $w_0$ is a disjoint union of a proper Carter diagram of type $D_{p+q}$ and a Coxeter diagram of type $A_{r-1}$. Further
		\begin{itemize}
			\item the element $z^\prime:= xy$ is a proper quasi-Coxeter element  of type $D_{p+q}$ in $P_{z^\prime}$,
			\item the decomposition of $z^\prime$ is related to its Carter diagram as described in  Proposition~\ref{PropConjQCoxBipartite},
			\item the element $z$ is a Coxeter element of type $A_{r-1}$ in $P_z$ .
		\end{itemize}
		In particular, $w_0$ is a proper parabolic quasi-Coxeter element in $W$.
\end{proposition}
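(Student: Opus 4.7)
The plan is to read the decomposition diagram of $w_0$ directly off the reflections produced by Procedure~\ref{ProcedureTypesII,III}, identify it with a Carter diagram via Lemma~\ref{LemmaIntersectIndices}, and then appeal to Proposition~\ref{PropConjQCoxBipartite} and the structure of parabolic subgroups of $D_n$ to fix the type of $P_{z'}$ and the conjugacy class of $z':=xy$. First I would observe that, in each of Equations~\ref{Eqt6}--\ref{Eqt11}, after the cyclic rotation permitted by Convention~\ref{ConventionNotations}, the two odd cycles $x = (x_1,\ldots,x_{p-1},\overline{u})$ and $y = (y_1,\ldots,y_{q-1},\overline{v})$ carry their unique overlined entry at the end, with every other entry positive. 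Procedure~\ref{ProcedureTypeI} therefore produces from $x$ the $p-1$ positive transpositions $(x_1,x_2),(x_2,x_3),\ldots,(x_{p-1},u)$, and from $y$ the $q-1$ transpositions ending in $(y_{q-1},v)$; together with $(u,v)$ and $(\overline{u},\overline{v})$ these yield the reduced decomposition of $z'$ given by Proposition~\ref{PropReducedDecompII,III}, while the Step~1 reflections coming from $z$ are supported on indices disjoint from those of $x$ and $y$.

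Applying Lemma~\ref{LemmaIntersectIndices}, the consecutive transpositions within each of the $x$- and $y$-chains share exactly one index and produce $A_{p-1}$ and $A_{q-1}$ subdiagrams. At the centre, $(u,v)$ is linked to $(x_{p-1},u)$ via the shared index $u$ and to $(y_{q-1},v)$ via $v$, and the reflection $(\overline{u},\overline{v})$ has the same two neighbours by the same rule. The two pairs $\{(u,v),(\overline{u},\overline{v})\}$ and $\{(x_{p-1},u),(y_{q-1},v)\}$ each consist of commuting reflections, since the first pair shares both of its indices and the second pair shares none. Hence the four central reflections form a $4$-cycle with the $A_{p-1}$- and $A_{q-1}$-chains attached at two opposite vertices, which is precisely the Carter diagram $\Delta_{\min(p,q),\,p+q}$; it is proper because $p,q\geq 2$. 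Together with the $A_{r-1}$ component coming from $z$, this establishes the first assertion of the proposition.

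The remaining claims then follow quickly. The reflections in the decomposition of $z'$ act on the set of $p+q$ indices appearing in them and include the negative transposition $(\overline{u},\overline{v})$, so they generate the full reflection subgroup of type $D_{p+q}$ on those indices, which is $P_{z'}$ by Lemma~\ref{LemParaSub}. Lemma~\ref{LemParaQuasiCox} then guarantees that $z'$ is quasi-Coxeter in $P_{z'}$, and its monomial matrix---two cycles of lengths $p$ and $q$ with a single overline each---matches, up to relabelling of indices, the $\min(p,q)$-th representative of Proposition~\ref{PropConjQCoxBipartite}. Under the same relabelling, the ordered reduced decomposition supplied by Proposition~\ref{PropReducedDecompII,III} is transformed into the standard product $s_2\cdots s_m s_1 s_{m+1}\cdots s_n$ of Proposition~\ref{PropConjQCoxBipartite} via Hurwitz moves and commutations, giving the second bullet. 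The third bullet is already contained in Proposition~\ref{PropCoxDiagTypeII,III}, and since $z'$ and $z$ commute with disjoint supports, $P_{w_0}=P_{z'}\times P_z$ is of type $D_{p+q}\times A_{r-1}$, so $w_0$ is a proper parabolic quasi-Coxeter element of $W$ because its $D$-component $z'$ is proper.

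The main obstacle I anticipate is the explicit identification in the second bullet of the ordered reduced decomposition produced by Procedure~\ref{ProcedureTypesII,III} with the canonical form of Proposition~\ref{PropConjQCoxBipartite}, since the two agree only after a controlled sequence of braid and commutation moves; the other assertions reduce essentially to bookkeeping with Lemma~\ref{LemmaIntersectIndices} together with the computation of the signed cycle type.
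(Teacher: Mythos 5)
Your proposal is correct and follows the same overall skeleton as the paper's proof: read the reflections off Procedure~\ref{ProcedureTypesII,III}, use Lemma~\ref{LemmaIntersectIndices} to identify the decomposition diagram of $xy$ as a proper Carter diagram of type $D_{p+q}$ (your index $\min(p,q)$ is the natural one under the convention $m\le\lfloor n/2\rfloor$), handle $z$ separately as an $A_{r-1}$ Coxeter element, and rule out $xy$ being a Coxeter element by comparing signed cycle types. The one place where you genuinely diverge is the determination of the type of $P_{xy}$: the paper argues that the diagram is connected, so $P_{xy}$ is of type $A_{p+q}$ or $D_{p+q}$, and then invokes Carter's Theorem~A to exclude type $A$ because Carter diagrams in type $A$ contain no cycles; you instead show directly that the transposition chains generate the full symmetric group on the $p+q$ supporting indices and that conjugating $(\overline{u},\overline{v})$ by these transpositions yields every reflection $(\overline{i},\overline{j})$ on that support, so the reflections of the reduced decomposition generate the standard $D_{p+q}$ reflection subgroup. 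Your argument is more elementary and self-contained (it avoids the appeal to Carter's classification of admissible diagrams), at the cost of an explicit generation computation; the paper's is shorter but leans on an external structural fact. Your treatment of the second bullet (identifying the ordered decomposition with the canonical form of Proposition~\ref{PropConjQCoxBipartite} up to commutations) is no less complete than the paper's, which also does not carry out that bookkeeping explicitly.
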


\begin{proof}
	
	Since $xy$ and $z$ commute, we can apply Proposition~\ref{PropDiagTypeI} to $z$, and obtain the assertion for $z$,
	as well as Procedure~\ref{ProcedureTypesII,III} along with Proposition~\ref{PropReducedDecompII,III} to $xy$. The latter yields the decomposition
	$$xy = (u,v)(\overline{u},\overline{v}) (v, y_{q-1}) (y_{q-1}, y_{q-2}) \dotsc (y_2 ,y_1) (v, x_{p-1}) (x_{p-1}, x_{p-2}) \dotsc (x_2,x_1),$$ whose decomposition diagram is the Carter diagram $\Delta_{q-1,n}$
	with $p+q$ vertices. 
	In particular the decomposition diagram is connected, which yields that $P_{xy}$ is either of type $A_{p+q}$ or of type $D_{p+q}$.  By~\cite[Theorem A]{Carter}, $\Delta_{q-1,n}$ is not a Carter diagram in a group of type $A_{p+q}$ (as in the latter type Carter diagrams contain no cycles). Therefore $P_{xy}$ is of type $D_{p+q}$. All the parabolic subgroups of type $D_{n}$, $n \geq 4$,  are conjugate in $W$, and all the Coxeter elements in a finite Coxeter group are conjugate. As $xy$ has a different cycle-type than the elements appearing in Proposition~\ref{PropCoxDiagTypeII,III}, it is not a Coxeter element in $P_{xy}$. Thus, in this case $xy$ is a proper quasi-Coxeter element in $P_{xy}$.
		Therefore, $w_0$ is a proper parabolic quasi-Coxeter element in $W$.
\end{proof}

\begin{example}

Consider Equation \ref{Eqt11} for $m+1 \leq i < j \leq n$ and $n$ large enough. We have that $$w(\overline{i},\overline{n}) = (m,m-1, \dotsc, \overline{1})(i,i-1, \dotsc, m+1)(n,n-1, \dotsc, \overline{i+1}).$$ By Proposition \ref{ProcedureTypesII,III}, a reduced decomposition of $w(\overline{i},\overline{n})$ is
 $$w(\overline{i},\overline{n}) = (1,i+1)(\overline{1},\overline{i+1}) s_2 s_3 \dotsc s_m s_{i+2} \dotsc s_{n-1}s_n s_{m+2} \dotsc s_{i-1}s_i.$$

Its decomposition diagram is described in Figure~\ref{FigEqt11}.

\begin{figure}[H]
\begin{tikzpicture}

\node[draw, shape=circle, label=below:$s_{m}$] (2) at (0,0) {};
\node[draw, shape=circle, label=below:$s_{m-1}$] (3) at (1,0) {};
\node[] (0) at (2,0) {};
\node[] (00) at (3,0) {};
\node[draw, shape=circle] (4) at (4,0) {};
\node[draw, shape=circle, label=below:{\begin{small}$s_2$\end{small}}] (5) at (5,0) {};

\node[draw, shape=circle, label=above:{\begin{small}$(1,i+1)$\end{small}}] (6) at (6,1) {};
\node[draw, shape=circle, label=below:{\begin{small}$(\overline{1},\overline{i+1})$\end{small}}] (7) at (6,-1) {};
\node[draw, shape=circle,label=below:{\begin{small}$s_{i+2}$\end{small}}] (8) at (7,0) {};
\node[draw, shape=circle] (9) at (8,0) {};
\node[] (10) at (9,0) {};
\node[] (11) at (10,0) {};
\node[draw,shape=circle, label=below:$s_{n-1}$] (12) at (11,0) {};
\node[draw, shape=circle, label=below:{$s_{n}$}] (13) at (12.2,0) {};

\draw[thick,-] (2) to (3);
\draw[thick,-] (3) to (0);
\draw[thick,dashed,-] (0) to (00);
\draw[thick,-] (00) to (4);
\draw[thick,-] (4) to (5);
\draw[thick,-] (5) to (6);
\draw[thick,-] (5) to (7);
\draw[thick,-] (8) to (6);
\draw[thick,-] (8) to (7);
\draw[thick,-] (8) to (9);
\draw[thick,-] (9) to (10);
\draw[thick,dashed,-] (10) to (11);
\draw[thick,-] (11) to (12);
\draw[thick,-] (12) to (13);

\end{tikzpicture}

\begin{tikzpicture}

\node[draw, shape=circle, label=below:$s_{m+2}$] (1) at (0,0) {};
\node[draw, shape=circle] (2) at (1,0) {};
\node[] (3) at (2,0) {};
\node[] (4) at (3,0) {};
\node[draw, shape=circle, label=below:$s_{i-1}$] (5) at (4,0) {};
\node[draw, shape=circle, label=below:$s_i$] (6) at (5,0) {};

\draw[thick,-] (1) to (2);
\draw[thick,-] (2) to (3);
\draw[thick,dashed,-] (4) to (3);
\draw[thick,-] (4) to (5);
\draw[thick,-] (5) to (6);

\end{tikzpicture}
\caption{Decomposition diagram in the situation of Equation~\ref{Eqt11}.}
\label{FigEqt11}
\end{figure}
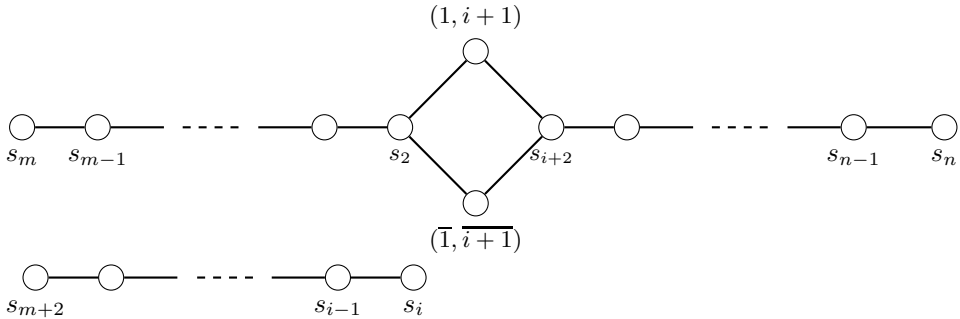

\end{example}

\section{Decomposition of the reflections and their lifts}\label{SecDecompReflections}

\subsection{Interval groups and the claimed presentation}\label{SubsectionConjectPres}

Let $w$ be a quasi-Coxeter element in $W$ of type $D_n$. Consider the interval $[1,w]$ of divisors of $w$ for the absolute order $\preceq$ given in Definition \ref{DefAbsoluteOrder} and the interval group $G([1,w])$ with its presentation given in Definition~\ref{DefIntervalGrpsDn}.

We denote by bold symbols the elements in $G([1,w])$. The copy $\bs{[1,w]}$ of the interval $[1,w]$ contains copies of the 
reflections $(i,j)$ and $(\overline{i},\overline{j})$ for $1 \leq i < j \leq n$, which we denote by $\bs{(i,j)}$ and $\bs{(\overline{i},\overline{j})}$.

By Proposition \ref{PropDualPresDn}, the group $G([1,w])$ is described by a presentation on the 
set \[
\RRefl = \{ \bs{(i,j)},\,\bs{(\overline{i},\overline{j})}: 1\leq i\neq j \leq n \}, \]
with relations the dual braid relations. These relations are described as $\bs{uv} = \bs{vu}$ if $uv \preceq w$ and $uv = vu$, and as $\bs{uv} = \bs{vt} = \bs{tu}$ ($\bs{t} \in \bs{\Refl}$) if $uv \preceq w$ and $uvu = vuv$ for $u, v \in \Refl$ and $u \neq v$.\\

It is convenient to reformulate our main result, Theorem~\ref{ThmPres}, as the following Theorem, which we shall prove in Section~\ref{SecProofPres}. Again we choose $1 \leq m \leq \lfloor \frac{n}{2}\rfloor$, and consider the quasi-Coxeter element $w=(m,m-1, \dotsc , \overline{1})(n,n-1, \dotsc, \overline{m+1})$. Let $\cS=\{s_1,\dotsc,s_n\}$ be the Carter generating set, where $s_{i+1} = (i,i+1)$ for $1 \leq i \leq n-1$ and $s_1 = (\overline{m},\overline{m+1})$. Let $\SS=\{\ss_1,\dotsc,\ss_n\} \subset \RRefl$ be the set in $G([1,w])$ in correspondence with $\cS$.

\begin{theorem}\label{ThmPresDn1}
The interval group $G([1,w])$ is isomorphic to the group $\GG$ defined by the presentation with generating set $\SS$ 
and relations described by the diagram $\Delta$ in Figure~\ref{FigDiagClaimedPres}
	together with the twisted cycle commutator relator $$\tc{\ss_1}{\ss_m}{\ss_{m+1}}{\ss_{m+2}} = [\ss_1,\ss_m^{-1}\ss_{m+1}\ss_{m+2}\ss_{m+1}^{-1}\ss_m] = [\ss_1, \ss_{m+2}^{\ss_{m+1}^{-1}\ss_m}],$$ 
	associated with the cycle $(\ss_1,\ss_m,\ss_{m+1},\ss_{m+2})$,
	that is, \[\GG = A(\Delta)/\langle \langle 
	\tc{\ss_1}{\ss_m}{\ss_{m+1}}{\ss_{m+2}} = [\ss_1,\ss_m^{-1}\ss_{m+1}\ss_{m+2}\ss_{m+1}^{-1}\ss_m] = [\ss_1, \ss_{m+2}^{\ss_{m+1}^{-1}\ss_m}]\rangle\rangle.\] 
	We will always describe this relator by a curved arrow inside the corresponding cycle; see Figure \ref{FigDiagClaimedPres}.
\end{theorem}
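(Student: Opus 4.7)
The plan is to follow the four-step strategy already outlined in Section~\ref{SubsectionStrategyProof}, namely to construct mutually inverse homomorphisms $f\colon \GG \to G([1,w])$ and $g\colon G([1,w])\to \GG$ by induction on $n$, with the bases $n=4$ and $n=5$ to be treated separately.

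The construction of $f$ is the easy direction. I would send each generator $\ss_i \in \GG$ to its namesake in $\bs{[1,w]}\subset G([1,w])$. Using Proposition~\ref{PropPreparationHomoF}, each pair $(\ss_i,\ss_j)$ corresponding to an edge of $\Delta$ satisfies $s_i s_j \preceq w$ with the correct order, so the braid and commutation relations encoded by $\Delta$ hold in $G([1,w])$ via the dual braid relations of Proposition~\ref{PropDualPresDn}. The twisted cycle commutator $\tc{\ss_1}{\ss_m}{\ss_{m+1}}{\ss_{m+2}}$ will be verified directly inside $G([1,w])$ using items (3) and (4) of Proposition~\ref{PropPreparationHomoF} (in the forthcoming Lemma~\ref{LemmaTC}), giving the homomorphism $f$.

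For $g$, the idea is to use the transitive Hurwitz action (Theorem~\ref{LemmaTransitivityQCox}) to fix, for each reflection $\refl \in \Refl$, a distinguished decomposition of $\refl$ as a word in the Carter generators $S$, and then define $g(\rrefl)$ to be the corresponding word in $\SS$. To check this is well-defined on $G([1,w])$, it suffices to verify that every dual braid relation $\rrefl\rrefl'=\rrefl'\rrefl''$ maps to a relation that holds in $\GG$. The key reduction is: any such relation already lies in the subinterval $G([1,w_0])$ for some divisor $w_0$ of length $n-1$, so I would shift the entire check down to the groups $G([1,w_0])$. Using the combinatorial technique of Procedures~\ref{ProcedureTypeI} and~\ref{ProcedureTypesII,III}, together with Propositions~\ref{PropDiagTypeI}, \ref{PropCoxDiagTypeII,III}, and \ref{PropQCoxDiagTypeII,III}, the decomposition diagram $\Delta_0$ of $w_0$ is either a union of Coxeter diagrams of types $A$ and $D$ or a proper Carter diagram of type $D_{p+q}$ (with a single $4$-cycle) together with a string. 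In the former case Bessis' Theorem~\ref{TheoremBessisDualMonoid} identifies $G([1,w_0])$ with the corresponding Artin group; in the latter case the inductive hypothesis applied to $w_0$ (which has smaller rank $p+q<n$ inside its proper Carter diagram) gives the analogous presentation with a twisted cycle commutator.

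The hard step, and the main obstacle, will be Step~4: showing that the image under $g$ of the defining relators of $G([1,w_0])$ (those encoded by each $\Delta_0$ and, when $w_0$ is proper quasi-Coxeter, the $\Delta_0$-twisted cycle commutator) can actually be derived inside $\GG$ from the single relator $\tc{\ss_1}{\ss_m}{\ss_{m+1}}{\ss_{m+2}}$ and the relations of $\Delta$. This is where the case analysis of Equations~\ref{Eqt1}--\ref{Eqt11} really matters: the lifts $g(\bs{\refl_i})$ involve $\ss_n$ only for the last reflection $\refl_{n-1}$ of each decomposition $\refl_1\cdots \refl_{n-1}$ produced by Procedures~\ref{ProcedureTypeI}/\ref{ProcedureTypesII,III}, with the single exception of Equation~\ref{Eqt11} at $i=n-1$. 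Because of this observation, every inductive step only needs ordinary braid/commutation relators (not new twisted cycles) once $n\geq 6$; that is why $n=5$ is made part of the base. Once the base cases have been dispatched by direct manipulation of the presentations (assisted in the paper by verification with \verb|kbmag|), the inductive step reduces to rewriting each relator in $G([1,w_0])$ as a word in $\SS$ and recognising it as a consequence of the $\Delta$-relations plus the single twisted cycle. Having both homomorphisms $f$ and $g$, the composites $f\circ g$ and $g\circ f$ act as the identity on the respective generating sets $\RRefl$ and $\SS$, hence are identity maps, completing the proof.
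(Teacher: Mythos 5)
Your proposal is correct and follows essentially the same four-step strategy as the paper: defining $f$ via Proposition~\ref{PropPreparationHomoF} and Lemma~\ref{LemmaTC}, defining $g$ via the lifts of Proposition~\ref{PropLiftReflections}, reducing the verification of the dual braid relations to the maximal divisors $w_0$ and their decomposition diagrams $\Delta_0$, and handling the lift of the $\Delta_0$-relations by induction on $n$ with bases $n=4,5$ (Proposition~\ref{PropLiftRedDecomp} and the appendix lemmas). The key observation you highlight --- that $\ss_n$ occurs only in $g(\rrefl_{n-1})$ apart from the exceptional case of Equation~\ref{Eqt11} with $i=n-1$ --- is precisely the point the paper uses to justify including $n=5$ in the base of the induction.
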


\begin{figure}[H]
	\begin{tikzpicture}
		
		\node[draw, shape=circle, label=below:$\ss_2$] (2) at (0,0) {};
		\node[draw, shape=circle, label=below:$\ss_3$] (3) at (1,0) {};
		\node[] (0) at (2,0) {};
		\node[] (00) at (3,0) {};
		\node[draw, shape=circle, label=below:$\ss_{m-1}$] (4) at (4,0) {};
		\node[draw, shape=circle, label=above:$\ss_m$] (5) at (5,0) {};
		
		\node[draw, shape=circle, label=above:$\ss_{m+1}$] (6) at (6,1) {};
		\node[draw, shape=circle, label=below:$\ss_1$] (7) at (6,-1) {};
		\node[draw, shape=circle,label=above:$\ss_{m+2}$] (8) at (7,0) {};
		\node[draw, shape=circle,label=below:$\ss_{m+3}$] (9) at (8,0) {};
		\node[] (10) at (9,0) {};
		\node[] (11) at (10,0) {};
		\node[draw,shape=circle, label=below:$\ss_{n-1}$] (12) at (11,0) {};
		\node[draw, shape=circle, label=below:$\ss_n$] (13) at (12.2,0) {};
		
		\node[] (curve) at (6,0) {\Huge $\circlearrowright$};
		
		\draw[thick,-] (2) to (3);
		\draw[thick,-] (3) to (0);
		\draw[thick,dashed,-] (0) to (00);
		\draw[thick,-] (00) to (4);
		\draw[thick,-] (4) to (5);
		\draw[thick,-] (5) to (6);
		\draw[thick,-] (5) to (7);
		\draw[thick,-] (8) to (6);
		\draw[thick,-] (8) to (7);
		\draw[thick,-] (8) to (9);
		\draw[thick,-] (9) to (10);
		\draw[thick,dashed,-] (10) to (11);
		\draw[thick,-] (11) to (12);
		\draw[thick,-] (12) to (13);
		
	\end{tikzpicture}\caption{The diagram presentation for the claimed presentation.}\label{FigDiagClaimedPres}
\end{figure}
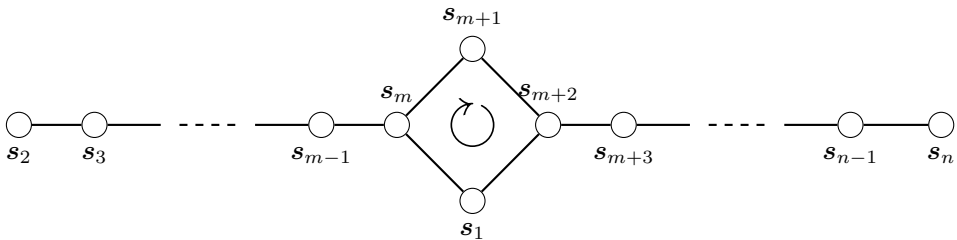

\begin{proposition}\label{PropPresCoxFromClaimedPres}

Adding the quadratic relations to the presentation of $\GG$, we obtain a group that is isomorphic to the Coxeter group $W$ of type $D_n$.

\end{proposition}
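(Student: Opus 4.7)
The plan is to match the presentation of $\tilde G := \GG/\langle\langle \ss_i^2 \rangle\rangle$ with the Cameron--Seidel--Tsaranov presentation of $W$ recalled in Proposition~\ref{PropCameron}. The two presentations already share the generating set $\{\ss_1,\dotsc,\ss_n\}$, the braid and commutation relations carried by $\Delta_{m,n}$, and the quadratic relations $\ss_i^2 = 1$. The only discrepancy is the relator attached to the $4$-cycle: in $\tilde G$ it is the twisted cycle commutator $\tc{\ss_1}{\ss_m}{\ss_{m+1}}{\ss_{m+2}}$, while Cameron's relator is $[\ss_m, \ss_{m+1}\ss_{m+2}\ss_1\ss_{m+2}\ss_{m+1}]$. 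So the whole task is to see that these two $4$-cycle relators cut out the same normal subgroup, modulo the other $4$-cycle relations.

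Once the quadratic relations are in force, each $\ss_i^{-1}$ equals $\ss_i$ and the twisted relator collapses to $[\ss_1,\ss_m\ss_{m+1}\ss_{m+2}\ss_{m+1}\ss_m]$, as already noted in Section~\ref{SecDualCox}. Writing $a = \ss_1,\ b = \ss_m,\ c = \ss_{m+1},\ d = \ss_{m+2}$, Cameron's relator reads $[b,(cd)a(cd)^{-1}] = 1$, which is equivalent to $[(cd)^{-1}b(cd), a] = 1$ via the general identity $[b, gxg^{-1}] = 1 \iff [g^{-1}bg, x] = 1$. The short rewriting
\[ (cd)^{-1}b(cd) \;=\; dcbcd \;=\; d(bcb)d \;=\; bdcdb \;=\; b(cdc)b \;=\; bcdcb, \]
obtained from one application of $cbc = bcb$, two applications of the commutation $bd = db$ (to move both $b$'s across the outer $d$'s), and one application of $dcd = cdc$, identifies Cameron's relator with $[a, bcdcb] = 1$, which is precisely the simplified twisted relator. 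Crucially, the rewriting uses only relations carried by the $4$-cycle of $\Delta_{m,n}$ together with the quadratic relations, so the equivalence is valid in $\tilde G$.

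Therefore the defining relations of $\tilde G$ and of Cameron's presentation of $W$ have the same normal closure in the free group on $\{\ss_1,\dotsc,\ss_n\}$, and $\tilde G \cong W$ follows from Proposition~\ref{PropCameron}. The only delicate point is the bookkeeping in the chain $dcbcd = bcdcb$: one must check that the rewriting exclusively employs the braid and commutation relations of the $4$-cycle (together with the quadratic relations), so as to remain valid in the quotient $\tilde G$.
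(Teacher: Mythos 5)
Your proof is correct and takes the same route as the paper: both reduce to the Cameron--Seidel--Tsaranov presentation of Proposition~\ref{PropCameron} by noting that the quadratic relations collapse the twisted cycle commutator relator to an ordinary cycle commutator relator. The paper's own proof is a one-liner that simply identifies the collapsed relator $[\ss_1,\ss_m\ss_{m+1}\ss_{m+2}\ss_{m+1}\ss_m]$ with the relator $[\ss_m,\ss_{m+1}\ss_{m+2}\ss_1\ss_{m+2}\ss_{m+1}]$ of that proposition, whereas your rewriting $dcbcd=bcdcb$ using only the $4$-cycle's braid and commutation relations (plus the conjugation identity for commutators) is precisely the verification that these two syntactically different relators have the same normal closure modulo the remaining relations, so your write-up is if anything more complete than the paper's.
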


\begin{proof}

In fact, the twisted cycle commutator relator becomes the cycle commutator relator $[\ss_1, \ss_{m+2}^{\ss_{m+1}\ss_m}] = (\ss_1\ss_m\ss_{m+1}\ss_{m+2}\ss_{m+1}\ss_m)^2$ introduced in Proposition \ref{PropCameron}. The result follows immediately from the same proposition.
\end{proof}

The proof of the next lemma is easy and left as an exercise. 

\begin{lemma}

The twisted cycle commutator relator $[\ss_1, \ss_{m+2}^{\ss_{m+1}^{-1}\ss_m}] = 1$ can be written as a relation between positive words as follows:
$$\ss_m\ss_1\ss_{m+1}\ss_{m+2}\ss_m\ss_{m+1}\ss_1 = \ss_1\ss_{m+1}\ss_{m+2}\ss_m\ss_{m+1}\ss_1\ss_m,$$
meaning that $\ss_m$ commutes with $\ss_1\ss_{m+1}\ss_{m+2}\ss_m\ss_{m+1}\ss_1$.

\end{lemma}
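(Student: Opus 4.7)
My plan is to prove the equivalence of the two relations by transforming the twisted cycle commutator $[\ss_1, u]=1$, with $u := \ss_m^{-1}\ss_{m+1}\ss_{m+2}\ss_{m+1}^{-1}\ss_m$, into a commutator with $\ss_m$, and then recognising that it differs from the target $[\ss_m, Y_2]=1$ by multiplication by an element that already commutes with $\ss_m$ in $A(\Delta)$. All computations take place inside the 4-cycle subdiagram of $\Delta$ on $\ss_1, \ss_m, \ss_{m+1}, \ss_{m+2}$, where we have the braid relations $\ss_m\ss_1\ss_m = \ss_1\ss_m\ss_1$, $\ss_m\ss_{m+1}\ss_m = \ss_{m+1}\ss_m\ss_{m+1}$, $\ss_{m+2}\ss_1\ss_{m+2} = \ss_1\ss_{m+2}\ss_1$, $\ss_{m+2}\ss_{m+1}\ss_{m+2} = \ss_{m+1}\ss_{m+2}\ss_{m+1}$, together with the commutations $\ss_m\ss_{m+2} = \ss_{m+2}\ss_m$ and $\ss_1\ss_{m+1} = \ss_{m+1}\ss_1$.

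First I would rewrite $u$ as a conjugate of $\ss_m$. Using $\ss_{m+1}\ss_{m+2}\ss_{m+1}^{-1} = \ss_{m+2}^{-1}\ss_{m+1}\ss_{m+2}$, the commutation $\ss_m\ss_{m+2} = \ss_{m+2}\ss_m$, and the consequence $\ss_m^{-1}\ss_{m+1}\ss_m = \ss_{m+1}\ss_m\ss_{m+1}^{-1}$ of the braid relation, one obtains $u = g^{-1}\ss_m g$ with $g:=\ss_{m+1}^{-1}\ss_{m+2}$. Hence $[\ss_1, u]=1$ is equivalent to $[\ss_m, Y_1]=1$ where $Y_1 := g\ss_1 g^{-1}$; simplification of $Y_1$ using $\ss_{m+2}\ss_1\ss_{m+2}^{-1} = \ss_1^{-1}\ss_{m+2}\ss_1$, $\ss_1\ss_{m+1}=\ss_{m+1}\ss_1$, and $\ss_{m+1}^{-1}\ss_{m+2}\ss_{m+1} = \ss_{m+2}\ss_{m+1}\ss_{m+2}^{-1}$ gives $Y_1 = \ss_1^{-1}\ss_{m+2}\ss_{m+1}\ss_{m+2}^{-1}\ss_1$.

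Next, setting $Y_2 := \ss_1\ss_{m+1}\ss_{m+2}\ss_m\ss_{m+1}\ss_1$, I would compute $Y_1 Y_2^{-1}$: substituting $\ss_{m+2}\ss_{m+1}\ss_{m+2}^{-1} = \ss_{m+1}^{-1}\ss_{m+2}\ss_{m+1}$ and then using the commutation $\ss_{m+2}\ss_m^{-1}\ss_{m+2}^{-1} = \ss_m^{-1}$, one obtains $Y_1 Y_2^{-1} = (\ss_1\ss_{m+1}\ss_m\ss_{m+1}\ss_1)^{-1}$. The concluding step is to verify that $\ss_m$ commutes with $\ss_1\ss_{m+1}\ss_m\ss_{m+1}\ss_1$ directly in $A(\Delta)$, using only the braid relations $\ss_m\ss_1\ss_m=\ss_1\ss_m\ss_1$, $\ss_m\ss_{m+1}\ss_m=\ss_{m+1}\ss_m\ss_{m+1}$, and the commutation $\ss_1\ss_{m+1}=\ss_{m+1}\ss_1$: a short chain of braid moves and commutations simplifies both $\ss_m \cdot \ss_1\ss_{m+1}\ss_m\ss_{m+1}\ss_1$ and $\ss_1\ss_{m+1}\ss_m\ss_{m+1}\ss_1 \cdot \ss_m$ to the common expression $(\ss_1\ss_{m+1}\ss_m)^2$. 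Since $Y_1 Y_2^{-1}$ then commutes with $\ss_m$ in $A(\Delta)$, the relations $[\ss_m, Y_1]=1$ and $[\ss_m, Y_2]=1$ are conjugate, hence equivalent, and combined with the first step this yields the equivalence of the twisted cycle commutator with the claimed positive relation.

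The main obstacle is the careful bookkeeping in the derivations of $Y_1$ and $Y_1 Y_2^{-1}$, which involve coordinating a sequence of braid substitutions with both positive and negative powers of the four generators; each rewrite is mechanical, but the sequence must be chosen so that the intermediate inverses cancel cleanly to yield the claimed positive form. The concluding purely-positive verification that $\ss_m$ commutes with $\ss_1\ss_{m+1}\ss_m\ss_{m+1}\ss_1$ is, by contrast, a pleasing short computation entirely inside the 4-cycle subdiagram.
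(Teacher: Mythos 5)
Your argument is correct. The paper itself gives no proof of this lemma (it is explicitly ``left as an exercise''), so there is no official route to compare against; what you supply is a complete and valid solution. I checked each step: the rewriting $u=\ss_m^{-1}\ss_{m+1}\ss_{m+2}\ss_{m+1}^{-1}\ss_m = g^{-1}\ss_m g$ with $g=\ss_{m+1}^{-1}\ss_{m+2}$ holds, the conjugation trick correctly converts $[\ss_1,u]=1$ into $[\ss_m,Y_1]=1$ with $Y_1=\ss_1^{-1}\ss_{m+2}\ss_{m+1}\ss_{m+2}^{-1}\ss_1$, the computation $Y_1Y_2^{-1}=(\ss_1\ss_{m+1}\ss_m\ss_{m+1}\ss_1)^{-1}$ is right (the key cancellation being $\ss_{m+2}\ss_{m+1}\ss_{m+2}^{-1}\ss_{m+1}^{-1}=\ss_{m+1}^{-1}\ss_{m+2}$ followed by $\ss_{m+2}\ss_m^{-1}\ss_{m+2}^{-1}=\ss_m^{-1}$), and the closing verification that $\ss_m$ commutes with $Z:=\ss_1\ss_{m+1}\ss_m\ss_{m+1}\ss_1$ uses only the relations of the $A_3$-parabolic on $\{\ss_1,\ss_m,\ss_{m+1}\}$; indeed $Z\ss_m=(\ss_1\ss_{m+1}\ss_m)^2$ is immediate from $[\ss_1,\ss_{m+1}]=1$, and $\ss_m Z$ reduces to the same word by a short braid chain. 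Since $[\ss_m,Y_1]=Z^{-1}[\ss_m,Y_2]Z$, the two relators generate the same normal closure, which is exactly the claimed equivalence. The one structural remark worth adding is that the element $Z$ by which $Y_1$ and $Y_2$ differ lies in the centraliser of $\ss_m$ already in $A(\Delta)$ because it involves only the string $\ss_1-\ss_m-\ss_{m+1}$; this is what makes the two positive forms interchangeable and is the conceptual heart of your proof.
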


%
%
%
%
%

\begin{remark}\label{RemOtherTwistedRel}
	Consider the cycle $(\ss_1,\ss_m,\ss_{m+1},\ss_{m+2})$ of the presentation of $\GG$. Consider the twisted cycle commutator relators $[\ss_1, \ss_{m+2}^{\ss_{m+1}^{-1}\ss_m}]$, $[\ss_{m},\ss_{1}^{\ss_{m+2}^{-1}}\ss_{m+1}]$, $[\ss_{m+1},\ss_{m+2}^{\ss_1^{-1}}\ss_{m}]$, and $[\ss_{m+2},\ss_1^{\ss_m^{-1}}\ss_{m+1}]$. It is an easy exercise to check that if one of the twisted cycle commutator relators holds, then the three other relators also hold.    
\end{remark}

\begin{remark}\label{RemNewProofDnBessis}

Suppose that $m=1$, i.e. $w$ is a Coxeter element. In this case, the group $\GG$ is the Artin group of type $D_n$. Our proof of Theorem \ref{ThmPresDn1} establishes a new proof of a result of Bessis showing that the interval group related to a Coxeter element in type $D_n$ is isomorphic to the related Artin group (see \cite{BessisDualMonoid}).

\end{remark}

We end the section by showing that the poset $([1,w],\preceq)$ of a proper quasi-Coxeter element $w$ in type $D_n$ is not a lattice. Hence the monoid defined by the same presentation as $G([1,w])$ viewed as a monoid presentation fails to be a Garside monoid. Note that this fact does not mean that the group $G([1,w])$ does not admit Garside structures.

\begin{proposition}\label{PropFailureLattice}
	Let $w$ be a proper quasi-Coxeter element in type $D_n$ for $n \geq 4$. The poset $([1,w],\preceq)$ is not a lattice.
\end{proposition}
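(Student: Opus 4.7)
The plan is to prove this by induction on $n$, at each step exhibiting a pair of elements of $[1,w]$ with no join.

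For the base case $n=4$, so that $m=2$ and $w = (2,\overline{1})(4,\overline{3})$, I use the two commuting reflections $a := (1,2)$ and $b := (\overline{1},\overline{2})$. Using the cycle-counting length formula of Proposition~\ref{PropLengthShi}, a short computation shows that $a, b \preceq w$ while their product $ab = (\overline{1})(\overline{2})$ has reflection length $2$ but is \emph{not} a divisor of $w$. I then produce the two distinct length-$3$ divisors $v := w(3,4) = (2,\overline{1})(\overline{3})(4)$ and $v' := w(\overline{3},\overline{4}) = (2,\overline{1})(3)(\overline{4})$ coming from Equations~\ref{Eqt9} and~\ref{Eqt11}, and verify that $a, b \preceq v$ and $a, b \preceq v'$. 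Any common upper bound of $a, b$ strictly below $v$ (or below $v'$) would have reflection length exactly $2$; but in a real reflection group the reflections dividing a reflection-length-$2$ element are exactly those whose roots lie in its two-dimensional moving plane, and for $ab$ this plane is $\mathrm{span}(e_1,e_2)$, which contains only the roots $e_1 \pm e_2$ and hence only the two reflections $a$ and $b$. So the unique length-$2$ common upper bound of $a$ and $b$ in $W$ is $ab$ itself, which is excluded from $[1,w]$. Hence $v$ and $v'$ are two incomparable minimal common upper bounds of $a$ and $b$ in $[1,w]$, the join $a \vee b$ does not exist, and $[1,w]$ is not a lattice.

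For the inductive step, fix $n \geq 5$ and any admissible $m \geq 2$, and consider $v := w(m+1,m+2)$, which by Equation~\ref{Eqt9} is a divisor of $w$ of length $n-1$. Since $n \geq m+3$ (automatic when $n \geq 5$ and $2 \leq m \leq \lfloor n/2\rfloor$), in the notation of Section~\ref{SubsectionRedDecompII} we have $p = m \geq 2$, $q = n-m-1 \geq 2$, and $r=1$. Proposition~\ref{PropQCoxDiagTypeII,III} then asserts that $v$ is a proper parabolic quasi-Coxeter element whose parabolic subgroup $P_v$ is of Coxeter type $D_{n-1}$. Applying the induction hypothesis to $v$ viewed inside $P_v \cong D_{n-1}$ gives that $[1,v]$ is not a lattice. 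Since $[1,v] \subseteq [1,w]$ and any element of $[1,w]$ that lies below $v$ already lies in $[1,v]$, two incomparable minimal common upper bounds in $[1,v]$ remain incomparable minimal common upper bounds in $[1,w]$; thus the lattice failure transfers and $[1,w]$ is also not a lattice.

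The main obstacle is the base case, specifically proving that no length-$2$ common upper bound of $a$ and $b$ exists in $[1,w]$. This reduces to the single non-divisibility statement $(\overline{1})(\overline{2}) \not\preceq w$ together with the elementary observation above on reflections in a rank-$2$ sub-root-system of $D_n$. Once this is in hand, the inductive step follows cleanly from the structural results of Section~\ref{SecMaxDivisors}.
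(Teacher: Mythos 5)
Your argument is correct and reaches the conclusion by a genuinely different route from the paper's. For the base case $n=4$ the paper simply records that a \texttt{GAP} computation finds a bowtie (two commuting reflections $t_1,t_2\preceq w$ with $t_1t_2\notin[1,w]$) and invokes Proposition~1.10 of \cite{McCSul}; you exhibit the bowtie explicitly with $a=(1,2)$, $b=(\overline{1},\overline{2})$, and reprove the relevant special case of that proposition by hand, using the standard fact (Brady--Watt/Carter) that the reflections below a length-two element are exactly those whose roots lie in its moving space $\mathrm{im}(u-1)$, so that $ab$ is the unique length-two common upper bound in $W$ and $w(3,4)$, $w(\overline{3},\overline{4})$ are two incomparable minimal common upper bounds in $[1,w]$. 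This buys a computer-free, self-contained base case, at the cost of importing a fact the paper does not state. For $n>4$ the paper descends in a single step to a rank-$4$ parabolic containing the $4$-cycle, whereas you induct one rank at a time through the maximal divisor $w(m+1,m+2)$, correctly identified via Proposition~\ref{PropQCoxDiagTypeII,III} as a proper quasi-Coxeter element of type $D_{n-1}$; your final transfer argument (any join in $[1,w]$ of the bottom pair would lie below both minimal upper bounds, hence below $v$, forcing them to coincide) is sound. The one point you should make explicit is the fact both descents rest on: for a parabolic quasi-Coxeter element $v$, the poset $([1,v],\preceq)$ computed in $W$ coincides with the one computed in $P_v$ (reflection lengths agree and all divisors of $v$ lie in $P_v$). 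The paper cites Theorem~2.1 of \cite{Dyer} for exactly this; without it, ``applying the induction hypothesis to $v$ viewed inside $P_v$'' is not yet justified, since the inductive statement concerns the interval taken inside a Coxeter group of type $D_{n-1}$, not inside $W$.
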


\begin{proof}
	We check using \verb|GAP| that for $w$ a proper quasi-Coxeter element in type $D_4$, there exists a bowtie in $([1,w],\preceq)$, hence it is not a lattice, see Proposition~1.10 in \cite{McCSul}. (The bowtie consists of two reflections $t_1,t_2$ that commute in $W$ such that $t_1 t_2 \not\in [1,w]$.)
	
	Let $w$ be a proper quasi-Coxeter element in type $D_n$ for $n > 4$. Then it contains a subword $w'$ of $w$ whose Carter diagram is a $4$-cycle. According to Theorem~2.1 in \cite{Dyer}, all elements below $w'$ in $([1,w],\preceq)$ are in $([1,w'],\preceq)$ read as a poset in the rank $4$ parabolic subgroup $P_{w'}$. Therefore there is still the bowtie coming from the case $n=4$ inside $([1,w],\preceq)$ for $n > 4$.
\end{proof}

\subsection{Decomposition of the reflections on Carter generators}\label{SubsectionDecompRefCarterGen}

The purpose of this section is to find decompositions of the elements of 
$\Refl$ in terms of Carter generators. This corresponds to Step 3 in the strategy of our proof as explained in Section~\ref{SubsectionStrategyProof}.

We recall that we fix an integer $m$ with $1 \leq m \leq \lfloor n/2 \rfloor$. 
We recall from Proposition~\ref{PropCameron} that a presentation of the 
Coxeter group $W$ is defined on Carter generators $s_1,s_2, \dotsc, s_n$ 
together with the relations described in the diagram presentation illustrated in Figure~\ref{FigureCarterDiagramDn} together with the quadratic relations.

Recall that the reflections $s_2$, $s_3$, $\dotsc$, $s_n$ are the transpositions $(1,2)$, $(2,3)$, $\dotsc$, $(n-1,n)$, respectively, while the reflection $s_1$ is the marked permutation $(\overline{m},\overline{m+1})$. In the next proposition, we decompose
each reflection $(i,j)$ and $(\overline{i},\overline{j})$ over 
the Carter generators $s_1$, $s_2$, $\dotsc$, $s_n$.

\begin{proposition}\label{PropDecReflections}
	
\begin{itemize}
\item[(1)] Let $\refl = (i,j)$ with $1 \leq i < j \leq n$. We have 
\begin{equation}\label{EqtDecomTransp}
	\refl = s_j^{s_{j-1} s_{j-2} \dotsc s_{i+1}}. 
\end{equation} 
\item[(2)] Let $\refl = (\overline{i},\overline{j})$ with $1 \leq i \leq m$ and $m+1 \leq j \leq n$. We have 
\begin{equation}\label{EqtDecomI}
	(\overline{i},\overline{j}) = s_1^{s_{m+2}s_{m+3} \dotsc s_j s_{m}s_{m-1} \dotsc s_{i+1}}.
\end{equation}
\item[(3)] Let $\refl = (\overline{i},\overline{j})$ with $1 \leq i < j \leq m$. We have 
\begin{equation}\label{EqtDecomII}
	(\overline{i},\overline{j}) = s_1^{s_m  s_{m-1} \dotsc s_{i+1}  s_{m+1}  s_m  \dotsc  s_{j+1}}.
\end{equation} 
\item[(4)] Let $\refl = (\overline{i},\overline{j})$ with $m+1 \leq i < j \leq n$. We have 
\begin{equation}\label{EqtDecomIII}
	(\overline{i},\overline{j}) = s_1^{s_{m+2} s_{m+3} \dotsc s_j s_{m+1}s_{m+2} \dotsc s_i}.
\end{equation} 
\end{itemize}

\end{proposition}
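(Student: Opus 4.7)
The plan is to verify each of the four formulas by a direct telescoping calculation, using the monomial matrix description of $W$ given in Section~\ref{SubsectionCoxGrp}. The single computational tool needed is the following elementary fact, immediate from the monomial matrix description: for a reflection $\refl = (a,b)$ or $\refl = (\overline{a},\overline{b})$ and a transposition $s_k = (k-1,k)$ (with $k \geq 2$), the conjugate $s_k \refl s_k$ is obtained by applying the permutation $(k-1,k)$ to the two unsigned indices $a,b$ while preserving any overlines; in particular, $s_k$ commutes with $\refl$ whenever $\{a,b\} \cap \{k-1,k\} = \emptyset$. Since each $s_k$ is an involution, $\refl^{g} = g^{-1}\refl g$ can be evaluated by iterating this rule along the factors of $g$ from the leftmost to the rightmost.

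For part~(1), start with $a_0 := s_j = (j-1,j)$ and successively conjugate by $s_{j-1}, s_{j-2},\dotsc,s_{i+1}$. At each step, the incoming $s_k$ has $\{k-1,k\}$ meeting the current reflection in a single index, and so simply relabels that index downward by $1$. This produces the chain $(j-2,j),(j-3,j),\dotsc,(i,j)$, giving Equation~\ref{EqtDecomTransp}.

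Parts~(2), (3), (4) follow the same pattern, now starting from $s_1 = (\overline{m},\overline{m+1})$. The conjugating word $g$ is in each case the concatenation of two monotone blocks of Carter generators: one block walks one of the two indices of $s_1$ to the target value, and the other block walks the remaining index. The key verification is that at every intermediate stage the subscript $k$ of the incoming $s_k$ lies in $\{a,b\}$ for \emph{exactly one} of the two current indices, so the conjugation is a pure relabeling of that single entry. In part~(2) the first block $s_{m+2},\dotsc,s_j$ uses only indices $\geq m+1$, relabeling $\overline{m+1}$ to $\overline{j}$ while fixing $\overline{m}$; the second block $s_m,\dotsc,s_{i+1}$ uses only indices $\leq m$, now moving $\overline{m}$ down to $\overline{i}$ while fixing $\overline{j}$. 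Part~(4) is symmetric. In part~(3) the generator $s_m$ appears in both blocks, but the first block $s_m,\dotsc,s_{i+1}$ uses indices in $\{i,\dotsc,m\}$ and the second block $s_{m+1},s_m,\dotsc,s_{j+1}$ uses indices in $\{j,\dotsc,m+1\}$; since $i < j$ these sets are disjoint from the corresponding surviving index, so the two blocks do not interfere.

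There is no genuine obstacle here: the proposition amounts to bookkeeping on signed transpositions. The only point requiring attention is verifying, in each of the three signed cases, the disjointness of index ranges that makes each block act on only one of the two entries of the current reflection.
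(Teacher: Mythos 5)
Your proposal is correct and matches the paper's proof, which simply states that the equations follow by direct calculation with the marked-permutation (monomial matrix) description of the reflections; you have filled in exactly that calculation, including the relevant disjointness-of-index-ranges checks. No further comment is needed.
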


\begin{proof}
	The equations are easily obtained by direct calculation using the definition of the reflections as marked permutations.
\end{proof}

\begin{remark}
	
\begin{itemize}
	\item[(1)] In Equation~\ref{EqtDecomTransp}, if $j = i+1$, we get $\refl = (i, i+1) = s_{i+1}$ ($1 \leq i \leq n-1$) visible among Carter generators.
	\item[(2)] In Equation~\ref{EqtDecomI}, if $i=m$ and $j=m+1$, we have $\refl = s_1$ visible among Carter generators.
	\item[(3)] The decompositions of $(i,j)$ and $(\overline{i},\overline{j})$ obtained in Equations~\ref{EqtDecomTransp}-- \ref{EqtDecomIII} are reduced decompositions over Carter generators. This result is not straightforward, but can be established using techniques from \cite{NeaimeIntervals}. Since this fact is not used in the proof of our main result, we will not include its proof in this paper.
\end{itemize}

\end{remark}

\subsection{Lifting the reflections}\label{SubLiftingReflections}

The purpose of this section is to write each element $\bs{(i,j)}$ and 
$\bs{(\overline{i},\overline{j})}$ ($1 \leq i < j \leq n$) of $G([1,w])$ 
in terms of the generators $\ss_1$, $\ss_2$, $\dotsc$, $\ss_n$ that appear 
in the presentation of Theorem \ref{ThmPresDn1}. This corresponds to Step 3 in the strategy of our proof as explained in Section~\ref{SubsectionStrategyProof}.
Recall that $\bs{(i,j)}$ and $\bs{(\overline{i},\overline{j})}$ are the 
copies of the reflections $(i,j)$ and $(\overline{i},\overline{j})$ within $\RRefl$, and 
$\ss_1$, $\ss_2$, $\dotsc$, $\ss_n$ are the copies of the reflections 
$s_1 = (\overline{m},\overline{m+1})$, $s_2$, $s_3$, $\dotsc$, $s_n$. 
We employ the decompositions of the reflections in term of $s_1,s_2, \dotsc, s_n$ that we described in Equations \ref{EqtDecomTransp} to \ref{EqtDecomIII} 
of Section \ref{SubsectionDecompRefCarterGen}. These decompositions 
serve as a guide. In fact, we walk through the reflections in the exponent expressions of these equations and derive our result. 
The explicitness of these equations enables us to describe in a simple way the main result of this section, Proposition \ref{PropLiftReflections}. 

As a preamble, let us illustrate our ideas in the following example.

\begin{example}\label{ExampleLiftTransp}

Let $W$ be the Coxeter group of type $D_4$. Let $m=2$ and let $w = (2,\overline{1})(4,\overline{3})$ be a proper quasi-Coxeter element of length $4$ by Proposition \ref{PropLengthShi}. Consider the reflection $(1,4) \in \Refl$. It is equal to $s_4^{s_3s_2}$ by Equation \ref{EqtDecomTransp}. We decompose the copy
$\bs{(1,4)} \in \bs{\Refl}$ in terms of the generators $\ss_1$, $\ss_2$, $\ss_3$, $\dotsc$, $\ss_n$.
\begin{itemize}
\item Since $w' = s_2(1,4)w = (1,\overline{3},\overline{4})(2)$ is of length $2$ by Proposition \ref{PropLengthShi}, then we have $(1,4)s_2 \preceq w$. So we get $(1,4)s_2 = s_2 {(1,4)}^{s_2} = s_2 s_4^{s_3} = s_2(2,4) \preceq w$. Hence this gives $\bs{(1,4)} = \bs{s_2 (2,4) s_2^{-1}}$.
\item Similarly, for $s_4^{s_3} = (2,4)$, we have that $(2,4)s_3 = s_3(3,4) \preceq w$ also by a direct application of Proposition \ref{PropLengthShi}. Hence we get $\bs{(2,4)} = \ss_3 \bs{(3,4)} \ss_3^{-1} = \ss_3 \ss_4 \ss_3^{-1}$.
\end{itemize}   
It follows that $\bs{(1,4)} = \ss_2\bs{(2,4)}\ss_2^{-1} = \ss_2\ss_3\ss_4\ss_3^{-1}\ss_2^{-1} = \ss_4^{\ss_3^{-1}\ss_2^{-1}}$.

\end{example}

\begin{proposition}\label{PropLiftReflections}

The copies of the reflections to the interval group $G([1,w])$ decompose on the generators $\ss_1$, $\ss_2$, $\dotsc$, $\ss_n$ as follows.
\begin{equation}\label{EqtLiftDecomTransp}
\bs{(i,j)} = \ss_j^{\ss_{j-1}^{-1} \ss_{j-2}^{-1} \dotsc \ss_{i+1}^{-1}}, \hbox{ for }1 \leq i < j \leq n,
\end{equation}
\begin{equation}\label{EqtLiftDecomI}
\bs{(\overline{i},\overline{j})} = \ss_1^{\ss_{m+2} \ss_{m+3} \dotsc \ss_j \ss_{m}^{-1} \ss_{m-1}^{-1} \dotsc \ss_{i+1}^{-1}},\linebreak \hbox{ for } 1 \leq i \leq m, m+1 \leq j \leq n.
\end{equation}
\begin{equation}\label{EqtLiftDecomII}
\bs{(\overline{i},\overline{j})} = \ss_1^{\ss_m^{-1}\ss_{m-1}^{-1} \dotsc \ss_{i+1}^{-1} \ss_{m+1} \ss_m^{-1} \ss_{m-1}^{-1} \dotsc \ss_{j+1}^{-1}}, \hbox{ for } 1 \leq i < j \leq m,
\end{equation}
\begin{equation}\label{EqtLiftDecomIII}
\bs{(\overline{i},\overline{j})} = \ss_1^{\ss_{m+2} \ss_{m+3} \dotsc \ss_j \ss_{m+1}^{-1} \ss_{m+2} \dotsc \ss_i}, \hbox{ for } m+1 \leq i < j \leq n,
\end{equation}

\end{proposition}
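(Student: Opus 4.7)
My plan is to derive each of the four formulas by lifting the corresponding identity of Proposition~\ref{PropDecReflections} to the interval group $G([1,w])$ through a sequence of dual braid relations. The guiding principle is exactly the one illustrated in Example~\ref{ExampleLiftTransp}: whenever $\refl \refl' = \refl' \refl''$ is an identity between two reduced reflection products lying below $w$, the presentation of $G([1,w])$ given in Proposition~\ref{PropDualPresDn} yields $\bs{\refl} = \bs{\refl'}\, \bs{\refl''}\, \bs{\refl'}^{-1}$. So at each step I peel off one Carter generator from the outside of the conjugator by invoking one such dual braid relation.

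For Equation~\ref{EqtLiftDecomTransp} I proceed by descending induction on $j-i$, with the trivial base case $j=i+1$ where $\bs{(i,i+1)}=\ss_{i+1}$ and the conjugator is empty. The inductive step rests on the $W$-identity $(i,j)\,s_{i+1}=s_{i+1}\,(i+1,j)$; to promote it to a dual braid relation in $G([1,w])$ I verify $(i,j)s_{i+1}\preceq w$ using Proposition~\ref{PropLengthShi} on the product $w\cdot s_{i+1}\cdot (i+1,j)$, after which the relation gives $\bs{(i,j)}=\ss_{i+1}\,\bs{(i+1,j)}\,\ss_{i+1}^{-1}$. The inductive hypothesis applied to $\bs{(i+1,j)}$ completes the step.

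For Equations~\ref{EqtLiftDecomI}, \ref{EqtLiftDecomII} and~\ref{EqtLiftDecomIII}, I follow the same strategy, taking $\bs{(\overline{m},\overline{m+1})}=\ss_1$ as the common base case and inducting on the length of the conjugator word in the corresponding formula of Proposition~\ref{PropDecReflections}. For instance, the $W$-identity $s_{i+1}(\overline{i},\overline{j})s_{i+1}=(\overline{i+1},\overline{j})$ (valid for $i<m$, $j>m$) peels off a single $\ss_{i+1}^{-1}$ on the right of the conjugator; analogous peelings produce the $\ss_{m+2}^{-1},\dotsc,\ss_j^{-1}$ on the left and the central $\ss_{m+1}$ piece. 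At each peeling I verify that the two-reflection product in play is $\preceq w$ by a short cycle-length computation in the monomial matrix description, using Proposition~\ref{PropLengthShi}.

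The main obstacle is orchestrating these peelings: at each step one must choose which Carter generator to remove so that the reduced reflection product at hand still divides $w$, and the three formulas for $\bs{(\overline{i},\overline{j})}$ cover genuinely different cases depending on the positions of $i$ and $j$ relative to $m$. The orders specified by the conjugator words of Proposition~\ref{PropDecReflections} have been chosen precisely so that each intermediate reflection naturally arises as a divisor of $w=(m,\dotsc,\overline{1})(n,\dotsc,\overline{m+1})$; once this ordering is respected, every divisibility check reduces to a routine length count via Proposition~\ref{PropLengthShi}. Finally, the sign pattern in the exponents of Proposition~\ref{PropLiftReflections} (which is invisible in Proposition~\ref{PropDecReflections} because of $s_k^2=1$) arises automatically from the identity $\bs{\refl}=\bs{\refl'}\bs{\refl''}\bs{\refl'}^{-1}$ used at each peeling.
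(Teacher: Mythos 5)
Your proposal is correct and follows essentially the same route as the paper's proof: the paper also starts from the conjugator words of Proposition~\ref{PropDecReflections}, peels off one Carter generator at a time from the outside by invoking a dual braid relation (checking at each step via Proposition~\ref{PropLengthShi} whether $\refl_k x_k \preceq w$ or $x_k \refl_k \preceq w$), and obtains the exponent signs exactly from which of these two cases occurs. The only difference is presentational — the paper phrases the peeling as a downward loop over the conjugator rather than an induction, and writes out the full chain of checks only for the case $1 \leq i < j \leq m$.
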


\begin{proof}

We employ the decompositions of the reflections $(i,j)$ and $(\underline{i},\underline{j})$ for $1 \leq i < j \leq n$ in term of Carter generators $s_1,s_2, \dotsc, s_n$ that we described in Equations \ref{EqtDecomTransp} to \ref{EqtDecomIII} in Section \ref{SubsectionDecompRefCarterGen}. Each of these equations is of the form $$\refl = y^{x_1x_2 \dotsc x_p},$$ for $p \geq 1$. For $k$ from $p$ down to $1$, we proceed as follows. Let $\refl_k = y^{x_1x_2 \dotsc x_k}$. We have that $\refl_p = \refl$.
\begin{itemize}
\item If $\refl_kx_k \preceq w$, then we have $\refl_kx_k = x_k \refl_k^{x_k} = x_k \refl_{k-1} \preceq w$. It follows that $\bs{\refl_kx_k} = \bs{x_k\refl_{k-1}}$, which implies that $\bs{\refl_k} = \bs{x_k\refl_{k-1}x_k^{-1}}$.
\item If $x_k\refl_k \preceq w$, then we have $x_k\refl_k = \refl_k^{x_k}x_k = \refl_{k-1}x_k \preceq w$. It follows that $\bs{x_k\refl_k} = \bs{\refl_{k-1}x_k}$, which gives $\bs{\refl_k} = \bs{x_k^{-1}\refl_{k-1}x_k}$.
\end{itemize}
It turns out that for all $k$ from $p$ down to $1$, we are in one of the previous two situations in all Equations \ref{EqtDecomTransp} to \ref{EqtDecomIII}. It follows that $\refl = \bs{y^{x_1^{\epsilon_1}x_2^{\epsilon_2}\dotsc x_p^{\epsilon_p}}}$, where $\bs{\epsilon_k} = -1$ or $1$ ($1 \leq k \leq p$), depending whether we apply the first or the second situation, respectively.

Let us explain this for the copy $\bs{(\overline{i},\overline{j})}$ of $(\overline{i},\overline{j})$ for $1 \leq i < j \leq m$. The same argument applies for the other equations.  For $1 \leq i < j \leq m$, by Equation \ref{EqtDecomII}, we have that $$(\overline{i},\overline{j}) = s_1^{s_m  s_{m-1} \dotsc s_{i+1}  s_{m+1}  s_m  \dotsc  s_{j+1}}.$$

Applying Proposition~\ref{PropLengthShi}, we have that $(\overline{i},\overline{j})s_{j+1} \preceq w$, so $(\overline{i},\overline{j}) s_{j+1} = s_{j+1}{(\overline{i},\overline{j})}^{s_{j+1}} = s_{j+1} (\overline{i},\overline{j+1}) \preceq w$. Then we get $$\bs{(\overline{i},\overline{j})} = \ss_{j+1} \bs{(\overline{i},\overline{j+1})} \ss_{j+1}^{-1}.$$

Next, applying Proposition~\ref{PropLengthShi}, we have $(\overline{i},\overline{j+1}) s_{j+2} \preceq w$, meaning that $(\overline{i},\overline{j+1}) s_{j+2} = s_{j+2} {(\overline{i},\overline{j+1})}^{s_{j+2}} = s_{j+2} (\overline{i},\overline{j+2}) \preceq w$. So, we get $$\bs{(\overline{i},\overline{j+1})} = \ss_{j+2} \bs{(\overline{i},\overline{j+2})} \ss_{j+2}^{-1}.$$ Hence we have $$\bs{(\overline{i},\overline{j})} = \bs{(\overline{i},\overline{j+2})}^{\ss_{j+2}^{-1}\ss_{j+1}^{-1}}.$$
And so on, we apply the same computations for $s_{j+2}$, $s_{j+3}$, $\dotsc$, $s_m$ appearing in the exponent part of Equation \ref{EqtDecomII}, and get $$\bs{(\overline{i},\overline{j})} = \bs{(\overline{i},\overline{m})}^{\ss_{m}^{-1} \dotsc \ss_{j+2}^{-1} \ss_{j+1}^{-1}}.$$

Next, applying Proposition~\ref{PropLengthShi}, we have $s_{m+1}(\overline{i},\overline{m}) \preceq w$ (and not $(\overline{i},\overline{m})s_{m+1} \preceq w$), meaning that $s_{m+1} (\overline{i},\overline{m}) = (\overline{i},\overline{m+1}) s_{m+1} \preceq w$. Hence we get $$\bs{ (\overline{i},\overline{m})} = \ss_{m+1}^{-1} \bs{(\overline{i},\overline{m+1})} \ss_{m+1}.$$ Then we obtain $$\bs{(\overline{i},\overline{j})} = \bs{(\overline{i},\overline{m+1})}^{\ss_{m+1} \ss_m^{-1} \dotsc \ss_{j+2}^{-1} \ss_{j+1}^{-1}}.$$

Similarly, we have $(\overline{i},\overline{m+1}) s_{i+1} \preceq w$, so $(\overline{i},\overline{m+1}) s_{i+1} = s_{i+1} (\overline{i+1},\overline{m+1}) \preceq w$. Thus, we get $$\bs{(\overline{i},\overline{m+1})} = \ss_{i+1} \bs{(\overline{i+1},\overline{m+1})} \ss_{i+1}^{-1}.$$ And so on, we apply the same calculation for $s_{i+1}$, $s_{i+2}$, $\dotsc$, $s_{m}$ until we obtain the desired equation:
\begin{equation*}
\hbox{For }1 \leq i < j \leq m, \bs{(\overline{i},\overline{j})} = \ss_1^{\ss_m^{-1} \ss_{m-1}^{-1} \dotsc \ss_{i+1}^{-1} \ss_{m+1} \ss_m^{-1} \ss_{m-1}^{-1} \dotsc \ss_{j+1}^{-1}}.
\end{equation*}

\end{proof}

We provide an example where the decomposition that we obtain will appear in the twisted cycle commutator relators in the next section.

\begin{example}\label{ExampleLiftII}

Let $W$ be a Coxeter group of type $D_4$. Let $m=2$ and $w = (2,\overline{1})(4,\overline{3})$ be a proper quasi-Coxeter element. Consider the reflection $\refl = (\overline{1},\overline{2})$ of type II. By Equation \ref{EqtDecomII}, we have that $(\overline{1},\overline{2}) = s_1^{s_2s_3}$.
\begin{itemize}
\item Since $w' = (\overline{1},\overline{2})s_3w = (2)(4,3,1)$ is of length $2$ by Proposition \ref{PropLengthShi}, then we have that $s_3 (\overline{1},\overline{2}) \preceq w$. Note that we are in the situation of the second bullet in the proof of Proposition \ref{PropLiftReflections}, meaning that $(\overline{1},\overline{2})s_3$ does not divide $w$. Hence we get $s_3 (\overline{1},\overline{2}) = (\overline{1},\overline{2})^{s_3}s_3 = s_1^{s_2} s_3 = (\overline{1},\overline{3}) s_3 \preceq w$. Therefore, we obtain $\ss_3\bs{(\overline{1},\overline{2})} = \bs{(\overline{1},\overline{3})} \ss_3$, which gives $\bs{(\overline{1},\overline{2})} = \ss_3^{-1}\bs{(\overline{1},\overline{3})}\ss_3$.
\item Next, we consider $(\overline{1},\overline{3}) = s_1^{s_2}$. We have that $w' = s_2(\overline{1},\overline{3})w = (1)(4,3,2)$ is of length $2$. Thus, we have $(\overline{1},\overline{3})s_2 \preceq w$, which says that $(\overline{1},\overline{3})s_2 = s_2 {(\overline{1},\overline{3})}^{s_2} = s_2 s_1 \preceq w$. We obtain $\bs{(\overline{1},\overline{3})s_2} = \bs{s_2s_1}$, which is $\bs{(\overline{1},\overline{3})} = \bs{s_2s_1s_2^{-1}}$.
\end{itemize}
Therefore, we obtain $$\bs{(\overline{1},\overline{2})} = \bs{s_3^{-1}(\overline{1},\overline{3})s_3} = \bs{s_3^{-1}s_2s_1s_2^{-1}s_3} = \bs{s_1^{s_2^{-1}s_3}}.$$

\end{example}

We finish this section by the next lemma which is used to show that $f$ is a homomorphism (see Step~1 of the strategy of the proof in Section~\ref{SubsectionStrategyProof}). The proof of the lemma uses Proposition~\ref{PropLiftReflections}.

\begin{lemma}\label{LemmaTC}
The braid relators $\braid{\ss_i}{\ss_j}$ and the twisted cycle commutator relator $\tc{\ss_{m+2}}{\ss_{m+1}}{\ss_m}{\ss_{1}}$ specified by the presentation given for $\GG$ hold in $G([1,w])$.
\end{lemma}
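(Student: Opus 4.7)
The plan is to separate the lemma into its two parts and verify each using the material already established. For the family of braid relators $\braid{\ss_i}{\ss_j}$ that appear in the presentation of $\GG$, I would simply appeal to Proposition~\ref{PropPreparationHomoF} combined with the defining dual braid relations of $G([1,w])$. Parts~(1)--(3) of Proposition~\ref{PropPreparationHomoF} tell us that every pair $s_i,s_j$ of Carter generators connected by an edge of $\Delta$ satisfies $s_is_j\preceq w$ with $s_is_j$ of order~$3$, while every pair not connected by an edge satisfies $s_is_j\preceq w$ with $s_is_j$ of order~$2$. In both cases, the relation $\refl\refl'=\refl'\refl''$ (or $\refl\refl'=\refl'\refl$) described in Proposition~\ref{PropDualPresDn} holds in $[1,w]$, hence the corresponding relation $\braid{\ss_i}{\ss_j}$ holds in $G([1,w])$.

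For the twisted cycle commutator relator $\tc{\ss_{m+2}}{\ss_{m+1}}{\ss_m}{\ss_1}$, the key ingredient is part~(4) of Proposition~\ref{PropPreparationHomoF}, which asserts that the reflection $t=s_1^{s_ms_{m+1}}=(\overline{m-1},\overline{m})$ satisfies $ts_{m+2}\preceq w$ with $ts_{m+2}$ of order~$2$. This gives immediately the dual braid relation
\[
\bs{t}\,\ss_{m+2}=\ss_{m+2}\,\bs{t}\qquad\text{in }G([1,w]).
\]
I would then use Proposition~\ref{PropLiftReflections}, specifically Equation~\ref{EqtLiftDecomII} with $i=m-1$ and $j=m$, to rewrite the left-hand lift: the two trailing products in that formula collapse to $\ss_m^{-1}$ and $\ss_{m+1}$ respectively, yielding
\[
\bs{t}=\ss_1^{\ss_m^{-1}\ss_{m+1}}=\ss_{m+1}^{-1}\ss_m\ss_1\ss_m^{-1}\ss_{m+1}.
\]
Substituting this expression into the commutation above and conjugating both sides by $\ss_m^{-1}\ss_{m+1}$, the identity rearranges into $[\ss_1,\ss_m^{-1}\ss_{m+1}\ss_{m+2}\ss_{m+1}^{-1}\ss_m]=1$, which is precisely the twisted cycle commutator relator $\tc{\ss_1}{\ss_m}{\ss_{m+1}}{\ss_{m+2}}$. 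An invocation of Remark~\ref{RemOtherTwistedRel} then shows that the four equivalent twisted cycle commutator relators around the $4$-cycle $(\ss_1,\ss_m,\ss_{m+1},\ss_{m+2})$ all hold, including the form $\tc{\ss_{m+2}}{\ss_{m+1}}{\ss_m}{\ss_1}$ appearing in the statement.

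There is no real obstacle here beyond the bookkeeping: the heart of the lemma is choosing the correct reflection $t=(\overline{m-1},\overline{m})$, whose lift to $G([1,w])$ is already a conjugate of $\ss_1$ by the word $\ss_m^{-1}\ss_{m+1}$, so that a single commutation of $\bs{t}$ with $\ss_{m+2}$ encodes exactly the desired twisted commutator. The only place where one needs to be careful is in the direction of conjugation in Equation~\ref{EqtLiftDecomII} (the sign pattern of the exponents is forced by which of the two alternatives $\refl_k x_k\preceq w$ or $x_k\refl_k\preceq w$ applies at each step of the proof of Proposition~\ref{PropLiftReflections}); this is precisely the pattern illustrated in Example~\ref{ExampleLiftII}, and following that pattern for the pair $(i,j)=(m-1,m)$ yields the required identity with no further computation.
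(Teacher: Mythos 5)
Your proposal is correct and follows essentially the same route as the paper's own proof: the braid relators come from parts (1)--(3) of Proposition~\ref{PropPreparationHomoF} together with the dual braid relations, and the twisted cycle commutator relator comes from part (4) via the lift $\bs{t}=\ss_1^{\ss_m^{-1}\ss_{m+1}}$ supplied by Equation~\ref{EqtLiftDecomII}, with Remark~\ref{RemOtherTwistedRel} converting between the equivalent forms of the relator around the $4$-cycle. The only cosmetic difference is that you carry out the conjugation explicitly to land on the form $[\ss_1,\ss_m^{-1}\ss_{m+1}\ss_{m+2}\ss_{m+1}^{-1}\ss_m]$, whereas the paper stops at $[\ss_{m+2},\ss_1^{\ss_m^{-1}\ss_{m+1}}]$ and cites the same remark.
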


\begin{proof}
Consider case (1) of Proposition~\ref{PropPreparationHomoF}. It implies a commuting braid relation, which lifts to $\bs{s_is_j = s_js_i}$ for $|i-j|>1$.

Consider case (2) of Proposition~\ref{PropPreparationHomoF}. It implies a dual braid relation $\bs{s_is_{i+1}} = \bs{s_{i+1}t}$, for $2 \leq i \leq n-1$, where $\bs{t} = \bs{(i-1,i+1)}$. Applying Equation~(16) of Proposition~\ref{PropLiftReflections}, we have that $\bs{t}$ is equal to $\bs{s_{i+1}^{s_i^{-1}}}$. The dual braid relation becomes $\bs{s_is_{i+1}} = \bs{s_{i+1}s_{i+1}^{s_i^{-1}}}$, that is $\bs{s_is_{i+1}s_i} = \bs{s_{i+1}s_is_{i+1}}$.

Case (3) of Proposition~\ref{PropPreparationHomoF} is treated similarly. We will give details on case (4) where the TC relator will appear.

Consider then Case (4) of Proposition~\ref{PropPreparationHomoF}. It implies that $ts_{m+2} \preceq w$ with $ts_{m+2}$ of order $2$. So we get a commuting dual braid relation. Now, we have to lift the relation to prove that it lives in $G([1,w])$. Applying Equation~(18) of Proposition~\ref{PropLiftReflections} for $i=m-1$ and $j=m$, we have that $\bs{t}$ is equal to $\bs{s_1^{s_m^{-1}s_{m+1}}}$. Then the dual braid relation becomes $[\bs{s_{m+2}},\bs{s_1^{s_m^{-1}s_{m+1}}}] = 1$, which is exactly the TC relator (see Remark~\ref{RemOtherTwistedRel}).
\end{proof}

\section{The proof of the main theorem}
\label{SecProofPres}
\subsection{The case $n=4$}\label{SubsectionN4}

Let $W$ be the Coxeter group of type $D_4$. By Proposition~\ref{PropCameron}, $W$ has a presentation on the four generators $s_2 = (1,2)$, $s_3 = (2,3)$, $s_4 = (3,4)$, and $s_1 = (\overline{2},\overline{3})$. We have $m=2$ and the corresponding proper quasi-Coxeter element is $w = (2,\overline{1})(4,\overline{3})$.

We prove that the interval group $G([1,w])$  is isomorphic to the group $\GG$ with four generators $\ss_1, \ss_2, \ss_3, \ss_4$,
corresponding to reflections $s_1,s_2,s_3,s_4$, with relations described by the corresponding Carter diagram, along with the twisted cycle commutator relator $$\tc{\ss_1}{\ss_2}{\ss_3}{\ss_4} = [\ss_1,\ss_4^{\ss_3^{-1}\ss_2}].$$

We consider a reflection $\refl$ in $\Refl$ to be of type I, II, or III according to the
type of the maximal divisor $w_0=w\refl$ of $w$, and assign the same types
to the elements of $\RRefl$. We collect the information we need in Tables \ref{TableExprrefl} and \ref{TableDiagLiftn4}.

Table \ref{TableExprrefl} provides decompositions 
for the elements of $\RRefl$ by applying Proposition~\ref{PropLiftReflections}.

The second column of Table \ref{TableDiagLiftn4} contains the $12$ divisors (the $w_0$'s) of length 
$3$ of the quasi-Coxeter element $w$ of types I, II, and III, where we separate each type by two lines. We follow 
Section~\ref{SubsectionDivisorsln-1} in order to produce them. 
We also follow Sections \ref{SubsectionRedDecompI} and 
\ref{SubsectionRedDecompII} to produce reduced decompositions 
of these elements and their decomposition diagrams (the $\Delta_0$'s). The last column produces a Coxeter-like diagram related to $\Delta_0$ that we call the lift of $\Delta_0$. It encodes the relations between the lift to the interval group of two reflections that appear in the reduced decomposition of each $w_0$.

\begin{table}[]
	\begin{center}
		\begin{tabular}{|c|c|}
			\hline
			Number & Decomposition of $\rrefl$ (type I)\\
			\hline
			1. & $\bs{(1,3)} = \bs{s_3^{s_2^{-1}}}$\\
			\hline
			2. & $\bs{(1,4)} = \bs{s_4^{s_3^{-1}s_2^{-1}}}$\\
			\hline
			3. & $\bs{(2,3)} = \ss_3$\\
			\hline
			4. & $\bs{(2,4)} = \bs{s_4^{s_3^{-1}}}$\\
			\hline
			5. & $\bs{(\overline{1},\overline{3})} = \bs{s_1^{s_2^{-1}}}$\\
			\hline
			6. & $\bs{(\overline{1},\overline{4})} = \bs{s_1^{s_4s_2^{-1}}}$\\
			\hline
			7. & $\bs{(\overline{2},\overline{3})} = \ss_1$\\
			\hline
			8. & $\bs{(\overline{2},\overline{4})} = \bs{s_3^{s_4}}$\\
			\hline
			& Decomposition of $\rrefl$ (type II)\\
			\hline
			9. &  $\bs{(1,2)} = \ss_2$\\
			\hline
			10. & $\bs{(\overline{1},\overline{2})} = \bs{s_1^{s_2^{-1}s_3}}$\\
			\hline
			& Decomposition of $\rrefl$ (type III)\\
			\hline
			11. &  $\bs{(3,4)} = \ss_4$\\
			\hline
			12. & $\bs{(\overline{3},\overline{4})} = \bs{s_1^{s_4s_3^{-1}}}$\\
			\hline
		\end{tabular}
	\end{center}
	\caption{Decompositions of $\rrefl$ in the case $n=4$.}
	\label{TableExprrefl}
\end{table}

\begin{table}[h]
\begin{center}
\begin{tabular}{|c|c|c|c|}
\hline
Number & Maximal divisor $w_0$ & Reduced Decomposition & Lift of $\Delta_0$\\
\hline \hline
\multirow{2}*{1.} &  \multirow{2}*{$w(1,3) = (4,\overline{1},2,\overline{3})$} &  \multirow{2}*{$(2,3)(\overline{1},\overline{2})(1,4)$} &  \multirow{2}*{ \begin{tikzpicture}
\node[draw, shape=circle, label=below:{$\bs{(1,4)}$}] (1) at (2,0) {};
\node[draw, shape=circle, label=below:{$\bs{(\overline{1},\overline{2})}$}] (2) at (1,0) {};
\node[draw, shape=circle, label=below:{$\bs{(2,3)}$}] (3) at (0,0) {};
\draw[thick,-] (1) to (2);
\draw[thick,-] (2) to (3);
\end{tikzpicture}}\\

& & & \\
\hline

\multirow{2}*{2.} &  \multirow{2}*{$w(1,4) = (4,\overline{3},\overline{1},2)$} &  \multirow{2}*{$(1,2)(\overline{1},\overline{3})(3,4)$} &  \multirow{2}*{ \begin{tikzpicture}
\node[draw, shape=circle, label=below:{$\bs{(3,4)}$}] (1) at (2,0) {};
\node[draw, shape=circle, label=below:{$\bs{(\overline{1},\overline{3})}$}] (2) at (1,0) {};
\node[draw, shape=circle, label=below:{$\bs{(1,2)}$}] (3) at (0,0) {};
\draw[thick,-] (1) to (2);
\draw[thick,-] (2) to (3);
\end{tikzpicture}}\\

& & & \\
\hline

\multirow{2}*{3.} &  \multirow{2}*{$w(2,3) = (4,2,\overline{1},\overline{3})$} &  \multirow{2}*{$(\overline{1},\overline{3})(1,2)(2,4)$} &  \multirow{2}*{ \begin{tikzpicture}
\node[draw, shape=circle, label=below:{$\bs{(\overline{1},\overline{3})}$}] (1) at (0,0) {};
\node[draw, shape=circle, label=below:{$\bs{(1,2)}$}] (2) at (1,0) {};
\node[draw, shape=circle, label=below:{$\bs{(2,4)}$}] (3) at (2,0) {};
\draw[thick,-] (1) to (2);
\draw[thick,-] (2) to (3);
\end{tikzpicture}}\\

& & & \\
\hline

\multirow{2}*{4.} &  \multirow{2}*{$w(2,4) = (4,\overline{3},2,\overline{1})$} &  \multirow{2}*{$(1,2)(\overline{2},\overline{3})(3,4)$} &  \multirow{2}*{ \begin{tikzpicture}
\node[draw, shape=circle, label=below:{$\bs{(1,2)}$}] (1) at (0,0) {};
\node[draw, shape=circle, label=below:{$\bs{(\overline{2},\overline{3})}$}] (2) at (1,0) {};
\node[draw, shape=circle, label=below:{$\bs{(3,4)}$}] (3) at (2,0) {};
\draw[thick,-] (1) to (2);
\draw[thick,-] (2) to (3);
\end{tikzpicture}}\\

& & & \\
\hline

\multirow{2}*{5.} &  \multirow{2}*{$w(\overline{1},\overline{3}) = (\overline{4},\overline{1},\overline{2},\overline{3})$} &  \multirow{2}*{$(\overline{2},\overline{3})(\overline{1},\overline{2})(\overline{1},\overline{4})$} &  \multirow{2}*{ \begin{tikzpicture}
\node[draw, shape=circle, label=below:{$\bs{(\overline{2},\overline{3})}$}] (1) at (0,0) {};
\node[draw, shape=circle, label=below:{$\bs{(\overline{1},\overline{2})}$}] (2) at (1,0) {};
\node[draw, shape=circle, label=below:{$\bs{(\overline{1},\overline{4})}$}] (3) at (2,0) {};
\draw[thick,-] (1) to (2);
\draw[thick,-] (2) to (3);
\end{tikzpicture}}\\

& & & \\
\hline

\multirow{2}*{6.} &  \multirow{2}*{$w(\overline{1},\overline{4}) = (4,3,\overline{1},\overline{2})$} &  \multirow{2}*{$(\overline{1},\overline{2})(1,3)(3,4)$} &  \multirow{2}*{ \begin{tikzpicture}
\node[draw, shape=circle, label=below:{$\bs{(\overline{1},\overline{2})}$}] (1) at (0,0) {};
\node[draw, shape=circle, label=below:{$\bs{(1,3)}$}] (2) at (1,0) {};
\node[draw, shape=circle, label=below:{$\bs{(3,4)}$}] (3) at (2,0) {};
\draw[thick,-] (1) to (2);
\draw[thick,-] (2) to (3);
\end{tikzpicture}}\\

& & & \\
\hline

\multirow{2}*{7.} &  \multirow{2}*{$w(\overline{2},\overline{3}) = (\overline{4},2,1,\overline{3})$} &  \multirow{2}*{$(1,3)(1,2)(\overline{2},\overline{4})$} &  \multirow{2}*{ \begin{tikzpicture}
\node[draw, shape=circle, label=below:{$\bs{(1,3)}$}] (1) at (0,0) {};
\node[draw, shape=circle, label=below:{$\bs{(1,2)}$}] (2) at (1,0) {};
\node[draw, shape=circle, label=below:{$\bs{(\overline{2},\overline{4})}$}] (3) at (2,0) {};
\draw[thick,-] (1) to (2);
\draw[thick,-] (2) to (3);
\end{tikzpicture}}\\

& & & \\
\hline

\multirow{2}*{8.} &  \multirow{2}*{$w(\overline{2},\overline{4}) = (4,3,2,1)$} &  \multirow{2}*{$(1,2)(2,3)(3,4)$} &  \multirow{2}*{ \begin{tikzpicture}
\node[draw, shape=circle, label=below:{$\bs{(1,2)}$}] (1) at (0,0) {};
\node[draw, shape=circle, label=below:{$\bs{(2,3)}$}] (2) at (1,0) {};
\node[draw, shape=circle, label=below:{$\bs{(3,4)}$}] (3) at (2,0) {};
\draw[thick,-] (1) to (2);
\draw[thick,-] (2) to (3);
\end{tikzpicture}}\\

& & & \\
\hline \hline

\multirow{2}*{9.} &  \multirow{2}*{$w(1,2) = (\overline{1})(2) (4,\overline{3})$} &  \multirow{2}*{$(1,3)(\overline{1},\overline{3})(3,4)$} &  \multirow{2}*{ \begin{tikzpicture}
\node[draw, shape=circle, label=below:{$\bs{(\overline{1},\overline{3})}$}] (1) at (0,0) {};
\node[draw, shape=circle, label=below:{$\bs{(3,4)}$}] (2) at (1,0) {};
\node[draw, shape=circle, label=below:{$\bs{(1,3)}$}] (3) at (2,0) {};
\draw[thick,-] (1) to (2);
\draw[thick,-] (2) to (3);
\end{tikzpicture}}\\

& & & \\
\hline

\multirow{2}*{10.} &  \multirow{2}*{$w(\overline{1},\overline{2}) = (1)(\overline{2})(4,\overline{3})$} &  \multirow{2}*{$(2,3)(\overline{2},\overline{3})(3,4)$} &  \multirow{2}*{ \begin{tikzpicture}
\node[draw, shape=circle, label=below:{$\bs{(\overline{2},\overline{3})}$}] (1) at (0,0) {};
\node[draw, shape=circle, label=below:{$\bs{(3,4)}$}] (2) at (1,0) {};
\node[draw, shape=circle, label=below:{$\bs{(2,3)}$}] (3) at (2,0) {};
\draw[thick,-] (1) to (2);
\draw[thick,-] (2) to (3);
\end{tikzpicture}}\\

& & & \\
\hline \hline

\multirow{2}*{11.} &  \multirow{2}*{$w(3,4) = (2,\overline{1})(\overline{3})(4)$} &  \multirow{2}*{$(1,3)(\overline{1},\overline{3})(1,2)$} &  \multirow{2}*{ \begin{tikzpicture}
\node[draw, shape=circle, label=below:{$\bs{(\overline{1},\overline{3})}$}] (1) at (0,0) {};
\node[draw, shape=circle, label=below:{$\bs{(1,2)}$}] (2) at (1,0) {};
\node[draw, shape=circle, label=below:{$\bs{(1,3)}$}] (3) at (2,0) {};
\draw[thick,-] (1) to (2);
\draw[thick,-] (2) to (3);
\end{tikzpicture}}\\

& & & \\
\hline

\multirow{2}*{12.} &  \multirow{2}*{$w(\overline{3},\overline{4}) = (2,\overline{1})(3)(\overline{4})$} &  \multirow{2}*{$(1,4)(\overline{1},\overline{4})(1,2)$} &  \multirow{2}*{ \begin{tikzpicture}
\node[draw, shape=circle, label=below:{$\bs{(\overline{1},\overline{4})}$}] (1) at (0,0) {};
\node[draw, shape=circle, label=below:{$\bs{(1,2)}$}] (2) at (1,0) {};
\node[draw, shape=circle, label=below:{$\bs{(1,4)}$}] (3) at (2,0) {};
\draw[thick,-] (1) to (2);
\draw[thick,-] (2) to (3);
\end{tikzpicture}}\\

& & & \\
\hline

\end{tabular}
\end{center}
\caption{Reduced decompositions and diagram lifts.}
\label{TableDiagLiftn4}
\end{table}

\begin{proposition} \label{PropCheckAllRelationsN4}

All the relations that describe the type $A_3$ diagrams on the last column of Table \ref{TableDiagLiftn4} are consequences of the relations described by the diagram presentation over $\ss_1, \ss_2, \ss_3, \ss_4$ illustrated in Figure~\ref{FigDiagClaimedPres}.

\end{proposition}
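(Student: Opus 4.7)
The plan is a finite, case-by-case check. For each of the twelve rows of Table~\ref{TableDiagLiftn4} we must establish three identities in $\GG$: two braid relations along the edges of the displayed $A_3$ chain diagram, and one commutation between its extremal vertices. Using Proposition~\ref{PropLiftReflections} as tabulated in Table~\ref{TableExprrefl}, each of the resulting $36$ identities translates into a specific word equality among $\ss_1,\ss_2,\ss_3,\ss_4$ and their inverses, which must be derived from the defining relations of $\GG$, namely the braid and commutation relations encoded by $\Delta_{2,4}$ together with the twisted cycle commutator relator $\tc{\ss_1}{\ss_2}{\ss_3}{\ss_4}$.

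First I would dispose of the rows where, after substitution, the required identity collapses to a conjugate (by a common word in $\ss_2,\ss_3,\ss_4$) of a relation already present in $\Delta_{2,4}$. For instance, in row $9$ the commutation of $\bs{(\overline{1},\overline{3})}$ and $\bs{(1,3)}$ amounts to $[\ss_1,\ss_3]^{\ss_2^{-1}}=1$, which follows immediately from the commutation $\ss_1\ss_3=\ss_3\ss_1$ of $\Delta_{2,4}$. A similar pattern handles rows $3$, $4$, $8$, $9$--$12$, and most of the relations appearing in the remaining rows, each by a short sequence of braid and commutation moves in which the outer conjugating factors cancel.

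The main obstacle is the handful of relations, occurring in rows $1$, $2$, $5$, $6$, $7$, in which two of the three reflections are nontrivial conjugates of $\ss_1$ or $\ss_4$ by distinct words in $\ss_2,\ss_3$; these cannot be derived from the $\Delta_{2,4}$ relations alone, and it is precisely here that the twisted cycle commutator relator is indispensable. My strategy is to use the four equivalent forms of the TC relator listed in Remark~\ref{RemOtherTwistedRel} to rewrite each asserted identity as a conjugate, by a suitable word in $\ss_2,\ss_3$, of one of these forms, after which braid manipulations close the argument. In practice the entire check can be automated via the Knuth--Bendix package \verb|kbmag| inside \verb|GAP|, as acknowledged in the introduction: once the defining presentation of $\GG$ is input, each of the $36$ word equalities is decided by the resulting rewriting system, so that the proposition reduces to a mechanical combinatorial verification.
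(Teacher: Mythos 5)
Your proposal is correct and follows essentially the same route as the paper: the paper's proof is precisely a finite verification carried out with \verb|kbmag| inside \verb|GAP|, illustrated by the single hand-worked case of row 6, where the commutation of $\bs{(\overline{1},\overline{2})} = \ss_1^{\ss_2^{-1}\ss_3}$ with $\ss_4$ is identified as exactly the twisted cycle commutator relator. Your additional remarks on which rows reduce to conjugated $\Delta_{2,4}$ relations and which require the TC relator (via Remark~\ref{RemOtherTwistedRel}) are consistent with, and slightly more explicit than, what the paper records.
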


We prove the proposition by showing using \verb|kbmag| \cite{KBMAG} within \verb|GAP| \cite{GAP} that the relations appearing in the last column of Table \ref{TableDiagLiftn4} are consequences of the relations between $\ss_1, \ss_2, \ss_3, \ss_4$. For example, let us consider a diagram where some twisted cycle commutator relators appear. Consider the element Number 6 of the table. We have to show that $\bs{(\overline{1},\overline{2})}$ commutes with $\ss_4$, where $\bs{(\overline{1},\overline{2})} = \bs{s_1^{s_2^{-1}s_3}}$. The commuting relation between $\bs{(\overline{1},\overline{2})}$ and $\ss_4$ is precisely the twisted cycle commutator relator $[\ss_4,\bs{s_1^{s_2^{-1}s_3}}]$ that is a consequence of the relations of the claimed presentation.

Now we can show that $G([1,w])$ is isomorphic to $\GG$, that is Theorem~\ref{ThmPresDn1} in the case where $n=4$.

\begin{proposition}\label{PropPresD4}

In the case $n=4$, the groups $\GG$ and $G([1,w])$ are isomorphic.

\end{proposition}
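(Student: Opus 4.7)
The plan is to construct mutually inverse homomorphisms $f\colon \GG \to G([1,w])$ and $g\colon G([1,w]) \to \GG$, following the four-step strategy of Section~\ref{SubsectionStrategyProof}, with most of the computational content already packaged in Tables~\ref{TableExprrefl}--\ref{TableDiagLiftn4} and Proposition~\ref{PropCheckAllRelationsN4}.

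First I would define $f$ by $f(\ss_i) = \ss_i \in \RRefl \subset G([1,w])$. By Lemma~\ref{LemmaTC}, the braid relators $\braid{\ss_i}{\ss_j}$ encoded by $\Delta_{2,4}$ together with the twisted cycle commutator relator $\tc{\ss_1}{\ss_2}{\ss_3}{\ss_4}$ hold in $G([1,w])$, so $f$ extends to a group homomorphism. Next, using Proposition~\ref{PropLiftReflections} (the $n=4$ specialisation is Table~\ref{TableExprrefl}), I would define $g$ on the generating set $\RRefl$ by sending each $\rrefl$ to the explicit word in $\SS$ recorded in that table.

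The main point is showing that $g$ respects all dual braid relations of $G([1,w])$. Every such relation $\rrefl\rrefl' = \rrefl'\rrefl''$ corresponds to a product $\refl\refl' \preceq w$ with $\ell_{\Refl}(\refl\refl') = 2$, and this prefix extends to at least one maximal divisor $w_0$ of $w$. Table~\ref{TableDiagLiftn4} enumerates the twelve maximal divisors together with a reduced decomposition and its decomposition diagram $\Delta_0$, each of which is of type $A_3$. By Propositions~\ref{PropDiagTypeI} and~\ref{PropCoxDiagTypeII,III}, $w_0$ is a parabolic Coxeter element in $W$, so by Theorem~\ref{TheoremBessisDualMonoid} the interval group $G([1,w_0])$ is the Artin group of type $A_3$; in particular the dual braid relation under consideration is a consequence of the ordinary braid relations between the three reflections appearing in the reduced decomposition of $w_0$. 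Applying $g$ to these $A_3$-braid relations yields precisely the relations displayed in the last column of Table~\ref{TableDiagLiftn4}, and Proposition~\ref{PropCheckAllRelationsN4} asserts that each of these is a consequence of the defining relations of $\GG$. Hence $g$ extends to a homomorphism $G([1,w]) \to \GG$.

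It remains to verify that $f$ and $g$ are mutually inverse. The composite $g\circ f$ fixes every generator $\ss_i$ by construction, so it is the identity on $\GG$. For $f\circ g$, the derivation of Proposition~\ref{PropLiftReflections} shows that the word $g(\rrefl) \in \SS^{\pm}$ is obtained by successively replacing $\rrefl$ by conjugates via dual braid moves that lie in $[1,w]$; applying $f$ pushes these conjugations back into $G([1,w])$ and recovers $\rrefl$, so $f\circ g$ is the identity on the generating set $\RRefl$ of $G([1,w])$. The hard part, which has already been dispatched, is Proposition~\ref{PropCheckAllRelationsN4}: it is precisely there that the twisted cycle commutator relator of $\GG$ is needed, as illustrated by divisor number~$6$ in Table~\ref{TableDiagLiftn4}, where the required commutation $[\ss_4,\bs{s_1^{s_2^{-1}s_3}}] = 1$ is exactly $\tc{\ss_4}{\ss_3}{\ss_2}{\ss_1}$ in the symmetrised form of Remark~\ref{RemOtherTwistedRel}. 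With Proposition~\ref{PropCheckAllRelationsN4} in hand, the remainder of the proof is a bookkeeping assembly of the pieces above.
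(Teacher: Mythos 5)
Your proposal is correct and follows essentially the same route as the paper: define $f$ via Lemma~\ref{LemmaTC}, define $g$ via Proposition~\ref{PropLiftReflections}, reduce each dual braid relation to a maximal divisor $w_0$ whose interval group is the type $A_3$ Artin group by \cite{BessisDualMonoid}, invoke Proposition~\ref{PropCheckAllRelationsN4} to pull the resulting relations back to the defining relations of $\GG$, and check that $f$ and $g$ are mutually inverse on generators. The paper's own proof is exactly this argument, so no further comparison is needed.
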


\begin{proof}
By transitivity of the Hurwitz action on the reduced decompositions over 
$\RRefl$ of $w$, the group $G([1,w])$ is generated by a copy 
\[ \RRefl = \{ \bs{(i,j)},\,\bs{(\overline{i},\overline{j})}:  
1 \leq i < j \leq 4\} \]
of the set of reflections in $\Refl$, 
and subject to the dual braid relations
$\rrefl\rrefl' = \rrefl'\rrefl''$ that correspond to relations
$\refl\refl' = \refl'\refl''$ in $W$ where
$\refl\refl' = \refl'\refl'' \preceq w$.

Consider the map $f: \GG \longrightarrow G([1,w]): \ss_i \longmapsto \ss_i$. By Lemma~\ref{LemmaTC}, the relations of the presentation of $\GG$ hold in $G([1,w])$.

Now consider the map $g: G([1,w]) \longrightarrow \GG$ that maps each 
generator $\rrefl$ of $G([1,w])$ to the expression for it over the 
generators $\ss_1$, $\ss_2$, $\ss_3$, and $\ss_4$ that is given by
Proposition \ref{PropLiftReflections}. Let $\rrefl\rrefl'$, 
$\rrefl'\rrefl''$ be the two sides of a dual braid relation. 
Then there exists $w_0 \preceq w$ of length $3$ such that 
$\refl\refl' = \refl'\refl'' \preceq w_0$. By \cite{BessisDualMonoid}, we know that the group 
$G([1,w_0])$ is isomorphic to the group defined by a presentation that we have described by Coxeter diagrams of type $A_3$ in the last column of Table~\ref{TableDiagLiftn4}. 
Hence we obtain that the dual braid relation $\rrefl\rrefl'=\rrefl'\rrefl''$ is a consequence of the relations of the corresponding diagram in the table.

In addition, we have already shown in Proposition~\ref{PropCheckAllRelationsN4} that the 
relations of these diagrams are consequences of the relations we have associated with the diagram $\Delta$. 
Hence the map $g$ is a homomorphism.

Clearly, the composition $f \circ g$ is equal to $id_{G([1,w])}$ and $g \circ f$ equal to $id_{\GG}$. 
Therefore, the groups $\GG$ and $G([1,w])$ are isomorphic.
\end{proof}

\subsection{The case $n=5$}\label{SubsectionN5}

Let $W$ be the Coxeter group of type $D_5$. It has a presentation on the five generators $s_2 = (1,2)$, $s_3 = (2,3)$, $s_4 = (3,4)$, $s_5 = (4,5)$, and $s_1 = (\overline{2},\overline{3})$ (see Proposition \ref{PropCameron}). Here $m$ is equal to $2$ and the corresponding proper quasi-Coxeter element is $w = (2,\overline{1})(5,4,\overline{3})$. 

We prove that the interval group $G([1,w])$  is isomorphic to the 
group $\GG$ with five generators $\ss_1, \ss_2, \ss_3, \ss_4$, and $\ss_5$ corresponding to $s_1,s_2,s_3,s_4$, $s_5$, with relations described by the diagram of Figure \ref{FigDiagClaimedPres},
where the curved arrow describes the twisted cycle commutator relator: $\tc{\ss_1}{\ss_2}{\ss_3}{\ss_4} = [\ss_1,\bs{s_4^{s_3^{-1}s_2}}]$.

Similarly to Table \ref{TableExprrefl}, we provide decompositions of $\rrefl$ by applying Proposition \ref{PropLiftReflections} and we divide them according to the three types I, II, III of elements in $\RRefl$.

\begin{table}[h]
\begin{center}
\begin{tabular}{|c|c|}
\hline
Number & Decomposition of $\rrefl$ (type I)\\
\hline
1. & $\bs{(1,3)} = \bs{s_3^{s_2^{-1}}}$\\
\hline
2. & $\bs{(1,4)} = \bs{s_4^{s_3^{-1}s_2^{-1}}}$\\
\hline
3. & $\bs{(1,5)} = \bs{s_5^{s_4^{-1}s_3^{-1}s_2^{-1}}}$\\
\hline
4. &  $\bs{(2,3)} = \ss_3$\\
\hline
5. &  $\bs{(2,4)} = \bs{s_4^{s_3^{-1}}}$\\
\hline
6. & $\bs{(2,5)} = \bs{s_5^{s_4^{-1}s_3^{-1}}}$\\
\hline
7. & $\bs{(\overline{1},\overline{3})} = \bs{s_1^{s_2^{-1}}}$\\
\hline
8. & $\bs{(\overline{1},\overline{4})} = \bs{s_1^{s_4s_2^{-1}}}$\\
\hline
9. & $\bs{(\overline{1},\overline{5})} = \bs{s_1^{s_4s_5s_2^{-1}}}$\\
\hline
10. & $\bs{(\overline{2},\overline{3})} = \ss_1$\\
\hline
11. & $\bs{(\overline{2},\overline{4})} = \bs{s_1^{s_4}}$\\
\hline
12. &  $\bs{(\overline{2},\overline{5})} = \bs{s_1^{s_4s_5}}$\\
\hline
 & Decomposition of $\rrefl$ (type II)\\
\hline
13. &  $\bs{(1,2)} = \ss_2$\\
\hline
14. & $\bs{(\overline{1},\overline{2})} = \bs{s_1^{s_2^{-1}s_3}}$\\
\hline
 & Decomposition of $\rrefl$ (type III)\\
\hline
15. &   $\bs{(3,4)} = \ss_4$\\
\hline
16. & $\bs{(3,5)} = \bs{s_5^{s_4^{-1}}}$\\
\hline
17. & $\bs{(4,5)} = \ss_5$\\
\hline
18. & $\bs{(\overline{3},\overline{4})} = \bs{s_1^{s_4s_3^{-1}}}$\\
\hline
19. &  $\bs{(\overline{3},\overline{5})} = \bs{s_1^{s_4s_5s_3^{-1}}}$\\
\hline
20. & $\bs{(\overline{4},\overline{5})} = \bs{s_1^{s_4s_5s_3^{-1}s_4}}$\\
\hline
\end{tabular}
\end{center}
\caption{Decompositions of $\rrefl$ in the case $n=5$.}
\label{TableDecompN5}
\end{table}

Table~\ref{TableDiagLiftN5} contains the same information as Table~\ref{TableDiagLiftn4} in the case $n=4$. 
We have $20$ divisors of $w$ of length $4$ (the $w_0$'s) obtained by multiplying $w$ from the right by $(i,j)$ and $(\overline{i},\overline{j})$ for $1 \leq i < j \leq 5$. These divisors belong to types I, II, and III. We separate each type by $2$ lines in the table. The third column produces the reduced decomposition from Sections \ref{SubsectionRedDecompI} and \ref{SubsectionRedDecompII}. The last column describes the lift of the diagram $\Delta_0$ that encodes the relations between the lift to the interval group of two reflections that appear in the reduced decomposition of each $w_0$.

We showed, using \verb|kbmag| within \verb|GAP| that all the relations described in the diagrams of the last column are consequences of the relations of the claimed presentation (see Theorem~\ref{ThmPresDn1}). The only cases that correspond to proper quasi-Coxeter elements are numbers $15$, $17$, and $19$ of Table~\ref{TableDiagLiftN5}. Let $w_0$ be one of these elements. We know from Proposition~\ref{PropPresD4} that $G([1,w_0])$ is isomorphic to the group defined by a presentation associated to the square diagram with the twisted cycle commutator relator. We conclude with the statement of the result for $n=5$, whose proof we omit since it is similar to
the proof of Proposition \ref{PropPresD4}.

\begin{proposition}\label{PropPresD5}

	In the case $n=5$,
the groups $\GG$ and $G([1,w])$ are isomorphic.

\end{proposition}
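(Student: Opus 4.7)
The plan is to mirror the proof of Proposition~\ref{PropPresD4}, replacing the input data by the richer divisor analysis for $n=5$. First, I define $f: \GG \to G([1,w])$ by $f(\ss_i) = \ss_i$ for $1\leq i\leq 5$. Lemma~\ref{LemmaTC} already guarantees that every braid relator $\braid{\ss_i}{\ss_j}$ read off from the Carter diagram $\Delta_{2,5}$, together with the twisted cycle commutator relator $\tc{\ss_1}{\ss_2}{\ss_3}{\ss_4}$, holds inside $G([1,w])$. Hence $f$ extends to a well-defined homomorphism.

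Second, I construct $g: G([1,w]) \to \GG$ by sending each generator $\rrefl \in \RRefl$ to the expression over $\ss_1,\dotsc,\ss_5$ produced by Proposition~\ref{PropLiftReflections}; these substitutions are collected in Table~\ref{TableDecompN5}. The essential point, as in the $n=4$ case, is that every dual braid relation $\rrefl \rrefl' = \rrefl' \rrefl''$ of $G([1,w])$ originates from a prefix $\refl \refl' \preceq w_0$ for some maximal divisor $w_0$ of length $4$. The case analysis of Sections~\ref{SubsectionRedDecompI} and~\ref{SubsectionRedDecompII} classifies each such $w_0$, its reduced reflection decomposition, and the associated decomposition diagram $\Delta_0$; the lifts of these diagrams are displayed in the last column of Table~\ref{TableDiagLiftN5}.

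Third, I would split the twenty divisors into two families. When $\Delta_0$ is a Coxeter diagram (all rows of Table~\ref{TableDiagLiftN5} except $15$, $17$, $19$), Proposition~\ref{PropCoxDiagTypeII,III} together with Bessis~\cite{BessisDualMonoid} identifies $G([1,w_0])$ with the corresponding Artin group, so only its standard braid relations need to be checked inside $\GG$ after applying the substitutions of Table~\ref{TableDecompN5}. When $\Delta_0$ is the proper Carter diagram of type $D_4$ (rows $15$, $17$, $19$, which by Proposition~\ref{PropQCoxDiagTypeII,III} correspond to proper quasi-Coxeter divisors), Proposition~\ref{PropPresD4} supplies a presentation of $G([1,w_0])$ governed by the twisted cycle commutator relator, and again each such relation must be verified inside $\GG$. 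Each verification is a finite word problem, which I would delegate to \verb|kbmag| within \verb|GAP|, exactly as in the case $n=4$. Once $g$ is a homomorphism, the compositions $f\circ g$ and $g\circ f$ are identity maps on generators by construction, so $\GG \cong G([1,w])$.

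The main obstacle is the mechanical but extensive verification of these twenty relations, and in particular the three type-$D_4$ subword cases where the substitutions of Table~\ref{TableDecompN5} must reproduce the twisted cycle commutator exactly; here Remark~\ref{RemOtherTwistedRel} is likely needed in order to recognise the resulting relator as one of the four equivalent forms of $\tc{\cdot}{\cdot}{\cdot}{\cdot}$. Apart from this bookkeeping, the argument is entirely parallel to that of Proposition~\ref{PropPresD4}.
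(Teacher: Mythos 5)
Your proposal is correct and follows essentially the same route as the paper: the same maps $f$ and $g$, the same reduction of each dual braid relation to a maximal divisor $w_0$ via Tables~\ref{TableDecompN5} and~\ref{TableDiagLiftN5}, the same identification of rows $15$, $17$, $19$ as the proper quasi-Coxeter cases handled by Proposition~\ref{PropPresD4}, and the same \verb|kbmag|/\verb|GAP| verification, after which the paper likewise concludes by noting the argument is parallel to that of Proposition~\ref{PropPresD4}.
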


\newpage
\newgeometry{paperwidth=18cm,
	left=2cm,
	right=2cm,
	paperheight=32cm,
	top=1cm,
	bottom=2cm}

\begin{table}
\caption{Reduced decompositions and diagram lifts.}
\begin{center}
\begin{tabular}{|p{0.3cm}|c|c|c|}
\hline
 & Maximal divisor $w_0$ & Reduced decomposition & Lift of $\Delta_0$\\
\hline \hline
\multirow{2}*{1.} &  \multirow{2}*{$w(1,3) = (5,4,\overline{1},2,\overline{3})$} &  \multirow{2}*{$(2,3)(\overline{1},\overline{2})(1,4)(4,5)$} &  \multirow{2}*{ \begin{tikzpicture}
\node[draw, shape=circle, label=below:{$\bs{(2,3)}$}] (1) at (0,0) {};
\node[draw, shape=circle, label=below:{$\bs{(\overline{1},\overline{2})}$}] (2) at (1,0) {};
\node[draw, shape=circle, label=below:{$\bs{(1,4)}$}] (3) at (2,0) {};
\node[draw, shape=circle, label=below:{$\bs{(4,5)}$}] (4) at (3,0) {};
\draw[thick,-] (1) to (2);
\draw[thick,-] (2) to (3);
\draw[thick,-] (4) to (3);
\end{tikzpicture}}\\

& & & \\
\hline

\multirow{2}*{2.} &  \multirow{2}*{$w(1,4) = (5,\overline{1},2,4,\overline{3})$} &  \multirow{2}*{$(3,4)(2,4)(\overline{1},\overline{2})(1,5)$} &  \multirow{2}*{ \begin{tikzpicture}
\node[draw, shape=circle, label=below:{$\bs{(3,4)}$}] (1) at (0,0) {};
\node[draw, shape=circle, label=below:{$\bs{(2,4)}$}] (2) at (1,0) {};
\node[draw, shape=circle, label=below:{$\bs{(\overline{1},\overline{2})}$}] (3) at (2,0) {};
\node[draw, shape=circle, label=below:{$\bs{(1,5)}$}] (4) at (3,0) {};
\draw[thick,-] (1) to (2);
\draw[thick,-] (2) to (3);
\draw[thick,-] (4) to (3);
\end{tikzpicture}}\\

& & & \\
\hline

\multirow{2}*{3.} &  \multirow{2}*{$w(1,5) = (5,4,\overline{3},\overline{1},2)$} &  \multirow{2}*{$(\overline{1},\overline{2})(\overline{1},\overline{3})(3,4)(4,5)$} &  \multirow{2}*{ \begin{tikzpicture}
\node[draw, shape=circle, label=below:{$\bs{(\overline{1},\overline{2})}$}] (1) at (0,0) {};
\node[draw, shape=circle, label=below:{$\bs{(\overline{1},\overline{3})}$}] (2) at (1,0) {};
\node[draw, shape=circle, label=below:{$\bs{(3,4)}$}] (3) at (2,0) {};
\node[draw, shape=circle, label=below:{$\bs{(4,5)}$}] (4) at (3,0) {};
\draw[thick,-] (1) to (2);
\draw[thick,-] (2) to (3);
\draw[thick,-] (4) to (3);
\end{tikzpicture}}\\

& & & \\
\hline

\multirow{2}*{4.} &  \multirow{2}*{$w(2,3) = (5,4,2,\overline{1},\overline{3})$} &  \multirow{2}*{$(\overline{1},\overline{3})(1,2)(2,4)(4,5)$} &  \multirow{2}*{ \begin{tikzpicture}
\node[draw, shape=circle, label=below:{$\bs{(\overline{1},\overline{3})}$}] (1) at (0,0) {};
\node[draw, shape=circle, label=below:{$\bs{(1,2)}$}] (2) at (1,0) {};
\node[draw, shape=circle, label=below:{$\bs{(2,4)}$}] (3) at (2,0) {};
\node[draw, shape=circle, label=below:{$\bs{(4,5)}$}] (4) at (3,0) {};
\draw[thick,-] (1) to (2);
\draw[thick,-] (2) to (3);
\draw[thick,-] (4) to (3);
\end{tikzpicture}}\\

& & & \\
\hline

\multirow{2}*{5.} &  \multirow{2}*{$w(2,4) = (5,2,\overline{1},4,\overline{3})$} &  \multirow{2}*{$(3,4)(\overline{1},\overline{4})(1,2)(2,5)$} &  \multirow{2}*{ \begin{tikzpicture}
\node[draw, shape=circle, label=below:{$\bs{(3,4)}$}] (1) at (0,0) {};
\node[draw, shape=circle, label=below:{$\bs{(\overline{1},\overline{4})}$}] (2) at (1,0) {};
\node[draw, shape=circle, label=below:{$\bs{(1,2)}$}] (3) at (2,0) {};
\node[draw, shape=circle, label=below:{$\bs{(2,5)}$}] (4) at (3,0) {};
\draw[thick,-] (1) to (2);
\draw[thick,-] (2) to (3);
\draw[thick,-] (4) to (3);
\end{tikzpicture}}\\

& & & \\
\hline

\multirow{2}*{6.} &  \multirow{2}*{$w(2,5) = (5,4,\overline{3},2,\overline{1})$} &  \multirow{2}*{$(1,2)(\overline{2},\overline{3})(3,4)(4,5)$} &  \multirow{2}*{ \begin{tikzpicture}
\node[draw, shape=circle, label=below:{$\bs{(1,2)}$}] (1) at (0,0) {};
\node[draw, shape=circle, label=below:{$\bs{(\overline{2},\overline{3})}$}] (2) at (1,0) {};
\node[draw, shape=circle, label=below:{$\bs{(3,4)}$}] (3) at (2,0) {};
\node[draw, shape=circle, label=below:{$\bs{(4,5)}$}] (4) at (3,0) {};
\draw[thick,-] (1) to (2);
\draw[thick,-] (2) to (3);
\draw[thick,-] (4) to (3);
\end{tikzpicture}}\\

& & & \\
\hline

\multirow{2}*{7.} &  \multirow{2}*{$w(\overline{1},\overline{3}) = (5,\overline{4},\overline{1},\overline{2},\overline{3})$} &  \multirow{2}*{$(\overline{2},\overline{3})(\overline{1},\overline{2})(\overline{1},\overline{4})(4,5)$} &  \multirow{2}*{ \begin{tikzpicture}
\node[draw, shape=circle, label=below:{$\bs{(\overline{2},\overline{3})}$}] (1) at (0,0) {};
\node[draw, shape=circle, label=below:{$\bs{(\overline{1},\overline{2})}$}] (2) at (1,0) {};
\node[draw, shape=circle, label=below:{$\bs{(\overline{1},\overline{4})}$}] (3) at (2,0) {};
\node[draw, shape=circle, label=below:{$\bs{(4,5)}$}] (4) at (3,0) {};
\draw[thick,-] (1) to (2);
\draw[thick,-] (2) to (3);
\draw[thick,-] (4) to (3);
\end{tikzpicture}}\\

& & & \\
\hline

\multirow{2}*{8.} &  \multirow{2}*{$w(\overline{1},\overline{4}) = (\overline{5},\overline{1},\overline{2},4,\overline{3})$} &  \multirow{2}*{$(3,4)(\overline{2},\overline{4})(\overline{1},\overline{2})(\overline{1},\overline{5})$} &  \multirow{2}*{ \begin{tikzpicture}
\node[draw, shape=circle, label=below:{$\bs{(3,4)}$}] (1) at (0,0) {};
\node[draw, shape=circle, label=below:{$\bs{(\overline{2},\overline{4})}$}] (2) at (1,0) {};
\node[draw, shape=circle, label=below:{$\bs{(\overline{1},\overline{2})}$}] (3) at (2,0) {};
\node[draw, shape=circle, label=below:{$\bs{(\overline{1},\overline{5})}$}] (4) at (3,0) {};
\draw[thick,-] (1) to (2);
\draw[thick,-] (2) to (3);
\draw[thick,-] (4) to (3);
\end{tikzpicture}}\\

& & & \\
\hline

\multirow{2}*{9.} &  \multirow{2}*{$w(\overline{1},\overline{5}) = (5,4,3,\overline{1},\overline{2})$} &  \multirow{2}*{$(\overline{1},\overline{2})(1,3)(3,4)(4,5)$} &  \multirow{2}*{ \begin{tikzpicture}
\node[draw, shape=circle, label=below:{$\bs{(\overline{1},\overline{2})}$}] (1) at (0,0) {};
\node[draw, shape=circle, label=below:{$\bs{(1,3)}$}] (2) at (1,0) {};
\node[draw, shape=circle, label=below:{$\bs{(3,4)}$}] (3) at (2,0) {};
\node[draw, shape=circle, label=below:{$\bs{(4,5)}$}] (4) at (3,0) {};
\draw[thick,-] (1) to (2);
\draw[thick,-] (2) to (3);
\draw[thick,-] (4) to (3);
\end{tikzpicture}}\\

& & & \\
\hline

\multirow{2}*{10.} &  \multirow{2}*{$w(\overline{2},\overline{3}) = (5,\overline{4},2,1,\overline{3})$} &  \multirow{2}*{$(1,3)(1,2)(\overline{2},\overline{4})(4,5)$} &  \multirow{2}*{ \begin{tikzpicture}
\node[draw, shape=circle, label=below:{$\bs{(1,3)}$}] (1) at (0,0) {};
\node[draw, shape=circle, label=below:{$\bs{(1,2)}$}] (2) at (1,0) {};
\node[draw, shape=circle, label=below:{$\bs{(\overline{2},\overline{4})}$}] (3) at (2,0) {};
\node[draw, shape=circle, label=below:{$\bs{(4,5)}$}] (4) at (3,0) {};
\draw[thick,-] (1) to (2);
\draw[thick,-] (2) to (3);
\draw[thick,-] (4) to (3);
\end{tikzpicture}}\\

& & & \\
\hline

\multirow{2}*{11.} &  \multirow{2}*{$w(\overline{2},\overline{4}) = (\overline{5},2,1,4,\overline{3})$} &  \multirow{2}*{$(3,4)(1,4)(1,2)(\overline{2},\overline{5})$} &  \multirow{2}*{ \begin{tikzpicture}
\node[draw, shape=circle, label=below:{$\bs{(3,4)}$}] (1) at (0,0) {};
\node[draw, shape=circle, label=below:{$\bs{(1,4)}$}] (2) at (1,0) {};
\node[draw, shape=circle, label=below:{$\bs{(1,2)}$}] (3) at (2,0) {};
\node[draw, shape=circle, label=below:{$\bs{(\overline{2},\overline{5})}$}] (4) at (3,0) {};
\draw[thick,-] (1) to (2);
\draw[thick,-] (2) to (3);
\draw[thick,-] (4) to (3);
\end{tikzpicture}}\\

& & & \\
\hline

\multirow{2}*{12.} &  \multirow{2}*{$w(\overline{2},\overline{5}) = (5,4,3,2,1)$} &  \multirow{2}*{$(1,2)(2,3)(3,4)(4,5)$} &  \multirow{2}*{ \begin{tikzpicture}
\node[draw, shape=circle, label=below:{$\bs{(1,2)}$}] (1) at (0,0) {};
\node[draw, shape=circle, label=below:{$\bs{(2,3)}$}] (2) at (1,0) {};
\node[draw, shape=circle, label=below:{$\bs{(3,4)}$}] (3) at (2,0) {};
\node[draw, shape=circle, label=below:{$\bs{(4,5)}$}] (4) at (3,0) {};
\draw[thick,-] (1) to (2);
\draw[thick,-] (2) to (3);
\draw[thick,-] (4) to (3);
\end{tikzpicture}}\\

& & & \\
\hline \hline

\multirow{4}*{13.} &  \multirow{4}*{$w(1,2) = (\overline{1})(2)(5,4,\overline{3})$} &  \multirow{4}*{$(1,3)(\overline{1},\overline{3})(3,4)(4,5)$} &  \multirow{4}*{ \begin{tikzpicture}
\node[draw, shape=circle, label=below:{$\bs{(3,4)}$}] (1) at (0,0) {};
\node[draw, shape=circle, label=below:{$\bs{(4,5)}$}] (2) at (1,0) {};
\node[draw, shape=circle, label=left:{$\bs{(1,3)}$}] (3) at (-1,0.6) {};
\node[draw, shape=circle, label=left:{$\bs{(\overline{1},\overline{3})}$}] (4) at (-1,-0.6) {};
\draw[thick,-] (1) to (2);
\draw[thick,-] (1) to (3);
\draw[thick,-] (4) to (1);
\end{tikzpicture}}\\

& & & \\
& & & \\
& & & \\
\hline

\multirow{4}*{14.} &  \multirow{4}*{$w(\overline{1},\overline{2}) = (1)(\overline{1})(5,4,\overline{3})$} &  \multirow{4}*{$(2,3)(\overline{2},\overline{3})(3,4)(4,5)$} &  \multirow{4}*{ \begin{tikzpicture}
\node[draw, shape=circle, label=below:{$\bs{(3,4)}$}] (1) at (0,0) {};
\node[draw, shape=circle, label=below:{$\bs{(4,5)}$}] (2) at (1,0) {};
\node[draw, shape=circle, label=left:{$\bs{(2,3)}$}] (3) at (-1,0.6) {};
\node[draw, shape=circle, label=left:{$\bs{(\overline{2},\overline{3})}$}] (4) at (-1,-0.6) {};
\draw[thick,-] (1) to (2);
\draw[thick,-] (1) to (3);
\draw[thick,-] (4) to (1);
\end{tikzpicture}}\\

& & & \\
& & & \\
& & & \\
\hline \hline

\multirow{6}*{15.} &  \multirow{6}*{$w(3,4) = (2,\overline{1})(5,\overline{3})(4)$} &  \multirow{6}*{$(1,3)(\overline{1},\overline{3})(1,2)(3,5)$} &  \multirow{6}*{ \begin{tikzpicture}
\node[draw, shape=circle, label=left:{$\bs{(1,2)}$}] (5) at (5,0) {};
\node[draw, shape=circle, label=right:{$\bs{(1,3)}$}] (6) at (6,1) {};
\node[draw, shape=circle, label=right:{$\bs{(\overline{1},\overline{3})}$}] (7) at (6,-1) {};
\node[draw, shape=circle,label=right:{$\bs{(3,5)}$}] (8) at (7,0) {};
\node[] (curve) at (6,0) {\Huge $\circlearrowright$};
\draw[thick,-] (5) to (6);
\draw[thick,-] (5) to (7);
\draw[thick,-] (8) to (6);
\draw[thick,-] (8) to (7);
\end{tikzpicture}}\\

& & & \\
& & & \\
& & & \\
& & & \\
& & & \\
\hline

\multirow{2}*{16.} &  \multirow{2}*{$w(3,5) = (2,\overline{1})(\overline{3})(5,4)$} &  \multirow{2}*{$(1,3)(\overline{1},\overline{3})(1,2)(4,5)$} &  \multirow{2}*{ \begin{tikzpicture}
\node[draw, shape=circle, label=below:{$\bs{(1,3)}$}] (1) at (0,0) {};
\node[draw, shape=circle, label=below:{$\bs{(1,2)}$}] (2) at (1,0) {};
\node[draw, shape=circle, label=below:{$\bs{(\overline{1},\overline{3})}$}] (3) at (2,0) {};
\node[draw, shape=circle, label=below:{$\bs{(4,5)}$}] (4) at (3,0) {};
\draw[thick,-] (1) to (2);
\draw[thick,-] (2) to (3);
\end{tikzpicture}}\\

& & & \\
\hline

\multirow{6}*{17.} &  \multirow{6}*{$w(4,5) = (2,\overline{1})(4,\overline{3})(5)$} &  \multirow{6}*{$(1,3)(\overline{1},\overline{3})(1,2)(3,4)$} &  \multirow{6}*{ \begin{tikzpicture}
\node[draw, shape=circle, label=left:{$\bs{(1,2)}$}] (5) at (5,0) {};
\node[draw, shape=circle, label=right:{$\bs{(1,3)}$}] (6) at (6,1) {};
\node[draw, shape=circle, label=right:{$\bs{(\overline{1},\overline{3})}$}] (7) at (6,-1) {};
\node[draw, shape=circle,label=right:{$\bs{(3,4)}$}] (8) at (7,0) {};
\node[] (curve) at (6,0) {\Huge $\circlearrowright$};
\draw[thick,-] (5) to (6);
\draw[thick,-] (5) to (7);
\draw[thick,-] (8) to (6);
\draw[thick,-] (8) to (7);
\end{tikzpicture}}\\

& & & \\
& & & \\
& & & \\
& & & \\
& & & \\

\hline
\multirow{2}*{18.} &  \multirow{2}*{$w(\overline{3},\overline{4}) = (2,\overline{1})(\overline{5},\overline{3})(\overline{4})$} &  \multirow{2}*{$(1,4)(\overline{1},\overline{4})(1,2)(\overline{3},\overline{5})$} &  \multirow{2}*{ \begin{tikzpicture}
\node[draw, shape=circle, label=below:{$\bs{(1,4)}$}] (1) at (0,0) {};
\node[draw, shape=circle, label=below:{$\bs{(1,2)}$}] (2) at (1,0) {};
\node[draw, shape=circle, label=below:{$\bs{(\overline{1},\overline{4})}$}] (3) at (2,0) {};
\node[draw, shape=circle, label=below:{$\bs{(\overline{3},\overline{5})}$}] (4) at (3,0) {};
\draw[thick,-] (1) to (2);
\draw[thick,-] (2) to (3);
\end{tikzpicture}}\\

& & & \\
\hline

\multirow{6}*{19.} &  \multirow{6}*{$w(\overline{3},\overline{5}) = (2,\overline{1})(3)(5,\overline{4})$} &  \multirow{6}*{$(1,4)(\overline{1},\overline{4})(1,2)(4,5)$} &  \multirow{6}*{ \begin{tikzpicture}
\node[draw, shape=circle, label=left:{$\bs{(1,2)}$}] (5) at (5,0) {};
\node[draw, shape=circle, label=right:{$\bs{(1,4)}$}] (6) at (6,1) {};
\node[draw, shape=circle, label=right:{$\bs{(\overline{1},\overline{4})}$}] (7) at (6,-1) {};
\node[draw, shape=circle,label=right:{$\bs{(4,5)}$}] (8) at (7,0) {};
\node[] (curve) at (6,0) {\Huge $\circlearrowright$};
\draw[thick,-] (5) to (6);
\draw[thick,-] (5) to (7);
\draw[thick,-] (8) to (6);
\draw[thick,-] (8) to (7);
\end{tikzpicture}}\\

& & & \\
& & & \\
& & & \\
& & & \\
& & & \\
\hline

\multirow{2}*{20.} &  \multirow{2}*{$w(\overline{4},\overline{5}) = (2,\overline{1})(4,3)(\overline{5})$} &  \multirow{2}*{$(1,5)(\overline{1},\overline{5})(1,2)(3,4)$} &  \multirow{2}*{ \begin{tikzpicture}
\node[draw, shape=circle, label=below:{$\bs{(1,5)}$}] (1) at (0,0) {};
\node[draw, shape=circle, label=below:{$\bs{(1,2)}$}] (2) at (1,0) {};
\node[draw, shape=circle, label=below:{$\bs{(\overline{1},\overline{5})}$}] (3) at (2,0) {};
\node[draw, shape=circle, label=below:{$\bs{(3,4)}$}] (4) at (3,0) {};
\draw[thick,-] (1) to (2);
\draw[thick,-] (2) to (3);
\end{tikzpicture}}\\

& & & \\
\hline

\end{tabular}
\end{center}
\label{TableDiagLiftN5}
\end{table}

\restoregeometry

\subsection{Lifting the reduced decompositions}\label{SubsectionLiftRedDecomp}

This section establishes Step 4 in our strategy that we have described in Section \ref{SubsectionStrategyProof}. 

Let $w$ be the quasi-Coxeter element $(m,m-1,\dotsc,2,\bar{1})(n,n-1,\dotsc,\overline{m+1})$ in type $D_n$. We define $g$ to be the map from $G([1,w])$ to $\GG$ that sends $\refl_i$ to its decomposition over the generating set $\SS$ of $\GG$ given by Proposition~\ref{PropLiftReflections}. In this section, we prove the following.

\begin{proposition}\label{PropLiftRedDecomp}
	        Let $w_0$ be a divisor of length $n-1$ of the quasi-Coxeter element $w$, let $\refl_1 \refl_2 \dotsc \refl_{n-1}$ be the reduced decomposition of $w_0$
        obtained using the results of Sections~\ref{SubsectionRedDecompI}, \ref{SubsectionRedDecompII}, and let
        $\Delta_0$ be the associated decomposition diagram
        (described in Propositions~\ref{PropDiagTypeI}, \ref{PropCoxDiagTypeII,III} and \ref{PropQCoxDiagTypeII,III}).
	Then for each of the relators $\braid{\refl_i}{\refl_j}$ and $\tc{\refl_i}{\refl_j}{\refl_k}{\refl_l}$
	between the reflections $\refl_i$ that is implied by the diagram $\Delta_0$,
	the corresponding relators $\braid{g(\rrefl_i)}{g(\rrefl_j)}$ or $\tc{g(\rrefl_i)}{g(\rrefl_j)}{g(\rrefl_k)}{g(\rrefl_l)}$ can be derived from the relations of the presentation of $\GG$ given in
	Theorem~\ref{ThmPresDn1}.
\end{proposition}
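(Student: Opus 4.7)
I proceed by strong induction on $n$, with base cases $n = 4$ and $n = 5$ established in Sections~\ref{SubsectionN4} and \ref{SubsectionN5}. Fix $n \geq 6$ and a divisor $w_0$ of length $n-1$ of $w$. The reduced decomposition $\refl_1 \dotsc \refl_{n-1}$ of $w_0$ is the one given by Lemmas~\ref{LemRedDecEqt1}--\ref{LemRedDecEqt45} when $w_0$ is of type I and by Procedure~\ref{ProcedureTypesII,III} combined with Proposition~\ref{PropReducedDecompII,III} for types II and III, while $\Delta_0$ takes one of the shapes listed in Propositions~\ref{PropDiagTypeI}, \ref{PropCoxDiagTypeII,III}, and \ref{PropQCoxDiagTypeII,III}. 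I treat the eleven Equations~\ref{Eqt1}--\ref{Eqt11} in turn with a common scheme.

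The observation that drives the induction, spelled out in Step~4 of Section~\ref{SubsectionStrategyProof}, is that, with the sole exception of Equation~\ref{Eqt11} for $i = n-1$, the generator $\ss_n$ occurs in exactly one of the lifts $g(\rrefl_1), \dotsc, g(\rrefl_{n-1})$, namely in $g(\rrefl_{n-1})$; all other lifts are words in $\ss_1, \dotsc, \ss_{n-1}$. Thus the subdiagram $\Delta_0'$ of $\Delta_0$ obtained by deleting the vertex $\rrefl_{n-1}$ is the decomposition diagram of a parabolic quasi-Coxeter divisor sitting in a rank-$(n-1)$ parabolic subgroup of $W$, and all of its relators live in the subgroup $\GG'$ of $\GG$ generated by $\ss_1, \dotsc, \ss_{n-1}$, which carries exactly the Carter presentation of Figure~\ref{FigDiagClaimedPres} with the rightmost vertex deleted. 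The inductive hypothesis applied to $\GG'$ therefore delivers every relator of $\Delta_0'$, both braid and, where present, twisted cycle commutator, as a consequence of the relations of $\GG$. Crucially, whenever $\Delta_0$ contains a $4$-cycle (Proposition~\ref{PropQCoxDiagTypeII,III}) this cycle lies among the first $n-2$ reflections, so all twisted cycle commutator relators are absorbed by this inductive step: this is precisely the reason for separating $n = 5$ as a base case.

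It remains to verify the braid or commutation relators corresponding to the edges of $\Delta_0$ incident to $\rrefl_{n-1}$. For each such edge, Proposition~\ref{PropLiftReflections} writes $g(\rrefl_{n-1})$ explicitly as a conjugate of $\ss_n$ or of $\ss_1$ by a short word in $\ss_1, \dotsc, \ss_{n-1}$, and the neighbour $g(\rrefl_j)$ similarly. Substituting these forms and normalising using the subpresentation on $\ss_1, \dotsc, \ss_{n-1}$ already established reduces each such relator to a single braid or commutation identity between $\ss_n$ and one of the generators of $\GG'$; this identity is directly visible as an edge (or a non-edge) of $\Delta$ incident to $\ss_n$ in Figure~\ref{FigDiagClaimedPres}. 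This completes the inductive step in the generic situation.

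The principal obstacle is the isolated case of Equation~\ref{Eqt11} with $i = n-1$, in which $\ss_n$ appears in two of the lifts $g(\rrefl_i)$ and the reduction above is not directly available. For this case I plan to replace the canonical reduced decomposition of $w_0$ produced by Procedure~\ref{ProcedureTypesII,III} by a Hurwitz-equivalent one in which the two occurrences of $\ss_n$ are concentrated in a single terminal reflection; the Hurwitz move itself is supported on a strictly smaller Carter subdiagram, hence the corresponding braid relation is supplied by the induction hypothesis. After this rearrangement the decomposition falls under the generic pattern above, and the same two-step argument concludes the proof.
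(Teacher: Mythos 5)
Your overall architecture — induction on $n$ with base cases $n=4,5$, splitting off the unique reflection whose lift involves $\ss_n$, absorbing the whole of $\Delta_0'$ (including any $4$-cycle) into the inductive hypothesis for the rank-$(n-1)$ subpresentation, and then checking only the edges of $\Delta_0$ incident to $\rrefl_{n-1}$ — is exactly the paper's proof of Proposition~\ref{PropLiftRedDecomp}. Two caveats on the generic case: you should justify that $w_0\refl_{n-1}$ is actually a divisor of the smaller quasi-Coxeter element $w'=(m,\dotsc,\overline{1})(n-1,\dotsc,\overline{m+1})$ (the paper invokes Lemma~5.3 of \cite{BM} for this), and the reduction of the $\rrefl_{n-1}$-edges ``to a single braid or commutation identity between $\ss_n$ and one of the generators'' is considerably more work than you suggest — the paper devotes Lemmas~\ref{LemLiftRedDecEqt1}--\ref{LemLiftRedDecEqt91011} to these manipulations, which repeatedly interleave the inductive hypothesis with braid moves rather than terminating in one visible edge of $\Delta$.

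The genuine gap is your treatment of the exceptional case, Equation~\ref{Eqt11} with $i=n-1$. There is no Hurwitz-equivalent reduced decomposition of $w_0=w(\overline{n-1},\overline{n})=(m,\dotsc,\overline{1})(n-1,\dotsc,m+1)(\overline{n})$ in which only one reflection has a lift containing $\ss_n$. Indeed $w_0$ sends $e_n$ to $-e_n$, so any product of reflections equal to $w_0$ must contain at least two reflections of the form $(k,n)$ or $(\overline{k},\overline{n})$ (one such reflection maps $e_n$ to $\pm e_k$ with $k\neq n$, and a second is needed to return to $\pm e_n$; none at all would fix $e_n$), and by Proposition~\ref{PropLiftReflections} a reflection's lift contains $\ss_n$ precisely when its indices include $n$. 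So the ``concentration'' move you plan is impossible, and the case cannot be folded back into the generic pattern. The paper instead keeps the canonical decomposition, notes that the two offending reflections $(1,n)$ and $(\overline{1},\overline{n})$ both sit at end nodes of $\Delta_0$ (one being the isolated fork vertex), applies induction to everything else, and verifies the remaining braid relators for these two reflections by direct computation in Lemma~\ref{LemLiftRedDecEqt91011}. You need to replace your rearrangement argument with such a direct verification.
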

\begin{proof}
	The proof is by induction on $n$. 	
	The proposition is proved for $n=4$ and $n=5$ in Sections~\ref{SubsectionN4},\ref{SubsectionN5} within the proofs of
	Propositions~\ref{PropPresD4} and~\ref{PropPresD5}. 
	
	So now let $n \geq 6$.
	Set $w' := (m,m-1,\dotsc,2,\overline{1})(n-1,n-2,\dotsc, \overline{m+1})$. Then $$w' = s_2 s_3 \dotsc s_m s_1 s_{m+1}s_{m+2} s_{m+3} s_{m+4} \dotsc s_{n-1}$$ with 
	diagram $\Delta_{m,n-1}$ by  Proposition~\ref{PropConjQCoxBipartite}, and we have $P:= P_{w'} = \langle s_1, \ldots , s_{n-1}\rangle$. 
	Moreover, the braid relators in the generators $g(\ss_i)$ and $g(\ss_j)$ as well as the twisted cycle relator for $w'$, which we will call the 
	$w'$-relators, are a subset of the $\GG$-relators.
	
	There are $11$ different possibilities for $w_0$ that are described in Section~\ref{SubsectionDivisorsln-1} by Equations~\ref{Eqt1}--\ref{Eqt11}.
	For the $11$-th equation we need to deal separately with the cases $n > i+1 $ and $n = i+1$.
	
	So suppose first that $w_0$  is either as in one of Equations $1-10$ or as in Equation~$11$ with $n > i+1$.
		In any of these cases, in the cycle decomposition of $w_0$ the number $n$ is only overlined in a cycle that has an even number of overlined entries.
	This implies that
	at most one of the reflections $\refl_1 , \ldots , \refl_{n-1}$ is not contained in $P$ and that this reflection corresponds to an end node of $\Delta_0$ and is without 
		loss of generality $t_{n-1}$. Thus we have $\refl_1 , \ldots , \refl_{n-2} \in P$. Set $w_1:= w_0 t_{n-1} =  \refl_1  \cdots  \refl_{n-2} $. Then we get by Lemma~5.3 in \cite{BM} that $w_1$ is a divisor  of length 
	$\ell_T(w_1) = n-2$ of $w'$.  Further $w'$ is of length $n-1$.
	By induction, the relators $\braid{g(\rrefl_i)}{g(\rrefl_j)}$ and $\tc{g(\rrefl_i)}{g(\rrefl_j)}{g(\rrefl_k)}{g(\rrefl_l)}$ are a consequence of the  $w'$-relators, which are $\GG$-relators.
	
	Hence it only remains to show that $\braid{g(\rrefl_i)}{g(\rrefl_j)}$ and $\tc{g(\rrefl_i)}{g(\rrefl_j)}{g(\rrefl_k)}{g(\rrefl_l)}$ are a  consequence of the $\GG$-relators  under the assumption that $i = n-1$. 
	This is done in  the appendix in  Lemmas~\ref{LemLiftRedDecEqt1}, $\dotsc$, \ref{LemLiftRedDecEqt91011}.
	Thereby notice, as $t_{n-1}$ corresponds to an end node of $\Delta_0$, it is not contained in a cycle  of $\Delta_0$ and the relator $\tc{\refl_{n-1}}{\refl_j}{\refl_k}{\refl_l}$ does not appear.\\
	
	Now suppose that $w_0$ is as in Equation~\ref{Eqt11} and that $n$ is overlined. Then $\Delta_0$ is the union of three strings, one of length $1$. The reflections  $\refl_i$ are in $P$ beside
	that one corresponding to the single vertex and one of the other 4 end nodes of the strings of $\Delta_0$. By induction it remains to derive the braid relators  for the two  just mentioned  reflections from the $\GG$-relators, which is 
	treated in Lemma~\ref{LemLiftRedDecEqt91011}.
\end{proof}

In Appendix A we establish the proof of the lemmas we refer to in the proof of Proposition~\ref{PropLiftRedDecomp}.

\subsection{The proof for $n>5$}\label{SubsectionProofn>5}

We are in position to prove Theorem \ref{ThmPresDn1}. The details of the proof are discussed and commented in our strategy developed in Section~\ref{SubsectionStrategyProof}.

Consider the map $f: \GG \longrightarrow G([1,w]): \ss_i \longmapsto \ss_i$. By Proposition \ref{PropPreparationHomoF} and Lemma~\ref{LemmaTC}, the relations of the presentation of $\GG$ hold in $G([1,w])$. This is Step 1 in the strategy of the proof.

Consider the map $g: G([1,w]) \longrightarrow \GG$ that maps each generator 
$\rrefl$ of the generating set $\RRefl$ of $G([1,w])$ to its decomposition
on the generators $\ss_1$, $\ss_2$, $\dotsc$, $\ss_n$ that we described 
in Equations \ref{EqtLiftDecomTransp} to \ref{EqtLiftDecomIII} within Proposition~\ref{PropLiftReflections}. This is Step 3 in the strategy of the proof that was established within Section~\ref{SecDecompReflections}.

We want to show that $g$ is a homomorphism. Consider a dual braid relation of $G([1,w])$, meaning a relation of the form 
$\rrefl\rrefl' = \rrefl'\rrefl''$ for $\rrefl,\rrefl',\rrefl'' \in \RRefl$. It 
corresponds to the fact that $\refl\refl' = \refl'\refl'' \preceq w$. Then we have to prove that the relation $g(\rrefl)g(\rrefl') = g(\rrefl')g(\rrefl'')$ is a consequence of the relations of the presentation of $\GG$. 
As $([1,w], \preceq )$ is a
	graded poset in which all the maximal flags have the same length, there are divisors $w_0$ of length $n-1$ of $w$ such that $\refl\refl'  \preceq w_0$.  By Proposition~\ref{PropLiftRedDecomp}, the braid relations and the twisted cycle commutator relator that correspond to the reduced decomposition and the diagram $\Delta_0$ for $w_0$ (produced in  Sections \ref{SubsectionRedDecompI} and \ref{SubsectionRedDecompII}) are a consequence of the $\GG$-relations
	(see Step 4 of our strategy). By induction, the relation $g(\rrefl)g(\rrefl') = g(\rrefl')g(\rrefl'')$ is a consequence of the  braid relations and the twisted cycle commutator relator related to the reduced decomposition in the $g$-image of the lift of $P_{w_0}$ to $\GG$. Therefore, $g$ is a homomorphism.
	
	
	
	Clearly, the composition $f \circ g$ is equal to $id_{G([1,w])}$ and $g \circ f$ is equal to $id_{\GG}$. Therefore, the groups $\GG$ and $G([1,w])$ are isomorphic.

\begin{small}

\end{small}

\appendix

\section{Proof of the lemmas}

In the calculations within the proofs of the next lemmas, we underline an expression being manipulated for emphasis.

\begin{lemma}\label{LemLiftRedDecEqt1}
	
	Suppose that $w_0=w(i,j)$, with $1\leq i \leq m$, $m+1\leq j \leq n$,
	so that we are in the situation of Equation~\ref{Eqt1}. Let $\refl_1\refl_2 \dotsc \refl_{n-1}$ be the reduced decomposition of $w_0$ described in Lemma~\ref{LemRedDecEqt1}.
	Then we can deduce from the relations of the presentation of $\GG$ that $g(\rrefl_{n-1}) g(\rrefl_{n-2}) g(\rrefl_{n-1}) = g(\rrefl_{n-2}) g(\rrefl_{n-1}) g(\rrefl_{n-2})$ and $g(\rrefl_{n-1})$ commutes with
	each of the elements $g(\rrefl_{k})$  with $k<n-2$.
	
\end{lemma}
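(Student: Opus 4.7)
The plan is to go through the seven sub-cases of Lemma~\ref{LemRedDecEqt1} in turn, identify $\refl_{n-1}$ and $\refl_{n-2}$, and compute their $g$-images via Proposition~\ref{PropLiftReflections}. In sub-cases (1), (2), (3), (6), (7), we have $\refl_{n-1}=s_n$ and $\refl_{n-2}=s_{n-1}$, so $g(\rrefl_{n-1})=\ss_n$ and $g(\rrefl_{n-2})=\ss_{n-1}$, and the required braid relation $\ss_n\ss_{n-1}\ss_n=\ss_{n-1}\ss_n\ss_{n-1}$ is an edge relation of $\Delta$. For the commutations with $g(\rrefl_k)$, $k\le n-3$, one inspects the formulas in Lemma~\ref{LemRedDecEqt1} to see that each such $\refl_k$ involves only indices in $\{1,\dotsc,n-1\}$; by Proposition~\ref{PropLiftReflections} the image $g(\rrefl_k)$ is then a word in $\ss_1,\dotsc,\ss_{n-1}$. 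Since $\ss_n$ commutes with $\ss_1,\dotsc,\ss_{n-2}$ in $\Delta$, the only step that needs checking is that $\ss_n$ commutes with the $\ss_{n-1}$-occurrences inside $g(\rrefl_k)$, and this in turn follows from the braid relations in the $A_{n-1}$-type subdiagram $\ss_2-\ss_3-\dotsc-\ss_m-\ss_{m+1}-\ss_{m+2}-\dotsc-\ss_n$ of $\Delta$, obtained by deleting the node $\ss_1$ and its two cycle edges.

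Sub-cases (4) and (5) are the interesting ones. Here $\refl_{n-1}$ is a transposition $(i,n)$ (respectively $(m,n)$), with $g(\rrefl_{n-1})=\ss_n^{\ss_{n-1}^{-1}\dotsc\ss_{i+1}^{-1}}$, which is the standard braid-group lift of a transposition inside the $A_{n-1}$-subdiagram just described. So long as $\refl_{n-2}$ and the $\refl_k$ with $k\le n-3$ themselves lie in the $A$-subgroup $\langle\ss_2,\dotsc,\ss_n\rangle$ of $\GG$, the required braid and commutation relations with $g(\rrefl_{n-1})$ are consequences of the $A_{n-1}$-braid relations alone, derived from $\Delta$. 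In particular, whenever $\refl_{n-2}=s_i$ (respectively $s_m$), the needed braid relation between $g(\rrefl_{n-1})$ and $\ss_i$ (respectively $\ss_m$) can be derived by pushing $\ss_i$ past the conjugating prefix $\ss_{i+1}\dotsc\ss_{n-1}$ using the commutations and braid relations of $\Delta$ and reducing to the identity $\ss_i\ss_{i+1}\ss_i=\ss_{i+1}\ss_i\ss_{i+1}$.

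The main obstacle is that in sub-case (4) the decomposition contains $(\overline{1},\overline{m})$, and in sub-case (5) it contains $(\overline{1},\overline{n-1})$; both of these have $g$-images involving $\ss_1$ and therefore lie outside the $A$-subgroup. Writing such an image as $\ss_1^P$ with $P\in\langle\ss_2,\dotsc,\ss_n\rangle$, the desired commutation with $g(\rrefl_{n-1})$ reduces to the commutation of $\ss_1$ with $P^{-1}g(\rrefl_{n-1})P$ inside the $A$-subgroup. The plan is to simplify this conjugate using the $A$-type braid relations into a short expression whose commutation with $\ss_1$ is either a direct consequence of the commutation edges of $\Delta$ (those $\ss_j$ with $j\notin\{m,m+2\}$ that commute with $\ss_1$) or an instance of the twisted cycle commutator relator (or one of its equivalent forms in Remark~\ref{RemOtherTwistedRel}). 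The special cases $i=1$ in sub-case~(4) and $m=1$ in sub-case~(5), where the reflection $(\overline{1},\overline{m})$ or $(\overline{1},\overline{n-1})$ itself plays the role of $\refl_{n-2}$, are handled by the same kind of reduction; this is the only place in the proof where the twisted cycle commutator relator is actually invoked.
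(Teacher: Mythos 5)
Your case analysis starts from a false premise: in sub-cases (2) and (3) of Lemma~\ref{LemRedDecEqt1} the last two reflections are \emph{not} $s_{n-1}$ and $s_n$. The decompositions there end in $\dotsc s_i\,(i,n-1)\,s_n$ (resp.\ $\dotsc s_m\,(m,n-1)\,s_n$), so $\refl_{n-1}=s_n$ but $\refl_{n-2}=(i,n-1)$ (resp.\ $(m,n-1)$). The braid relation you must derive is therefore between $\ss_n$ and $g\bigl(\bs{(i,n-1)}\bigr)=\ss_{n-1}^{\ss_{n-2}^{-1}\dotsm\ss_{i+1}^{-1}}$, which is not an edge relation of $\Delta$; it requires an explicit computation (commute $\ss_n$ past $\ss_{i+1},\dotsc,\ss_{n-2}$, apply $\ss_n\ss_{n-1}\ss_n=\ss_{n-1}\ss_n\ss_{n-1}$ in the middle, and reassemble), which is exactly what the paper carries out in its item (2). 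Your argument for these two sub-cases, as written, proves the wrong relation and leaves the required one unestablished. (The technique you sketch for sub-cases (4) and (5) would handle it, but you have not applied it here.)

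Beyond that, your treatment of sub-cases (4) and (5) is a plan rather than a proof precisely where the real work lies. The paper's argument for the commutation of $g\bigl(\bs{(i,n)}\bigr)$ with $g\bigl(\bs{(\overline{1},\overline{m})}\bigr)$ and with $g\bigl(\bs{(i+1,n-1)}\bigr)$ rests on the induction on $n$ running through Proposition~\ref{PropLiftRedDecomp}: one writes $g\bigl(\bs{(i,n)}\bigr)=\ss_n^{-1}g\bigl(\bs{(i,n-1)}\bigr)\ss_n$, uses that the other factor commutes with $\ss_n$, and invokes the inductive hypothesis for the rank-$(n-1)$ element $w'$. Your proposal never mentions this induction and instead promises to ``simplify the conjugate using the $A$-type braid relations into a short expression'' landing on a commutation edge or the twisted cycle commutator relator; this is not carried out, and it is not evident that the reduction always terminates as claimed. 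Your assertion that the TC relator is invoked only when $i=1$ (sub-case (4)) or $m=1$ (sub-case (5)) is also unverified, and in the case $m=1$ the presentation of $\GG$ contains no $4$-cycle and hence no TC relator at all, so that claim cannot be right as stated.
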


\begin{proof}
	We consider the seven different possible decompositions of $w_0$ that are
	described
	in Lemma~\ref{LemRedDecEqt1}. 
	In each case we do not need to consider the relations between $g(\rrefl_{n})$ 
	and $g(\rrefl_k)$ if both elements are within the set $\SS$.  
	
	(1) Where $i \neq m$ and $j \neq n-2, n-1, n$, we need to check that
	the element $g(\ss_n)=g(\bs{(n-1,n)})$ commutes with each of
	$g\left( \bs{(i+1,j)} \right)$, $g\left(\bs{(i,j+1)}\right)$, and $g\left(\bs{(\overline{1},\overline{m})}\right)$.
	
	By Proposition~\ref{PropLiftReflections}, we have that \begin{center}$g\left(\bs{(i+1,j)}\right) = \ss_j^{\ss_{j-1}^{-1} \ss_{j-2}^{-1} \dotsc \ss_{i+2}^{-1}}$ and $g\left(\bs{(i,j+1)}\right) = \ss_{j+1}^{\ss_j^{-1}\ss_{j-1}^{-1}\dotsc \ss_{i+1}^{-1}}$.\end{center}
	It follows from the relations of the presentation of $\GG$ that $g\left(\ss_n\right)$ commutes with both $g\left(\bs{(i+1,j)}\right)$ and $g\left(\bs{(i,j+1)}\right)$.
	
	We also have that $g\left(\bs{(\overline{1},\overline{m})}\right) = s_1^{\ss_m^{-1}\ss_{m-1}^{-1}\dotsc \ss_2^{-1}\ss_{m+1}}$. Then it also follows from the relations of the presentation of $\GG$ that $g\left(\ss_n\right)$ commutes with $g\left(\bs{(\overline{1},\overline{m})}\right)$.\\
	
	(2) Where $i \neq m$ and $j=n-2$, we need to check that
	the element $g\left(\ss_n\right)=g\left(\bs{(n-1,n)}\right)$ commutes with each of
	$g\left(\bs{(i+1,n-2)}\right)$, $g\left(\bs{(\overline{1},\overline{m})}\right)$, and that 
	the relation $g\left(\ss_n\right)g\left(\bs{(i,n-1)}\right)g\left(\ss_n\right) = g\left(\bs{(i,n-1)}\right)g\left(\ss_n\right) g\left(\bs{(i,n-1)}\right)$ holds.
	
	We have already shown in item (1) of this proof that $g\left(\ss_n\right)$ commutes with $g\left(\bs{(\overline{1},\overline{m})}\right)$. 
	
	We have that $g\left(\bs{(i+1,n-2)}\right) = \ss_{n-2}^{\ss_{n-3}^{-1}\ss_{n-4}^{-1}\dotsc \ss_{i+2}^{-1}}$. It follows directly from the relations of $\GG$ that $g\left(\ss_n\right)$ commutes with $g\left(\bs{(i+1,n-2)}\right)$.
	
	Now we prove that  $g\left(\ss_n\right)g\left(\bs{(i,n-1)}\right)g\left(\ss_n\right) = g\left(\bs{(i,n-1)}\right)g\left(\ss_n\right) g\left(\bs{(i,n-1)}\right)$:
	
	\begin{tabular}{lll}
		$g\left(\ss_n\right)g\left(\bs{(i,n-1)}\right)g\left(\ss_n\right)$ & $=$ & $ \ss_{i+1} \dotsc \ss_{n-3} \ss_{n-2} \underline{\ss_n \ss_{n-1} \ss_n} \ss_{n-2}^{-1} \ss_{n-3}^{-1} \dotsc \ss_{i+1}^{-1}$\\
		& $=$ & $ \ss_{i+1} \dotsc \ss_{n-3} \ss_{n-2} \ss_{n-1} \ss_{n} \ss_{n-1} \ss_{n-2}^{-1} \ss_{n-3}^{-1} \dotsc \ss_{i+1}^{-1}$\\
		&$=$ & $ (\ss_{i+1} \dotsc \ss_{n-3} \ss_{n-2} \ss_{n-1} \ss_{n-2}^{-1} \ss_{n-3}^{-1} \dotsc \ss_{i+1}^{-1})$\\
		& & $(\ss_{i+1} \dotsc \ss_{n-3}\ss_{n-2} \underline{\ss_{n}} \ss_{n-1} \ss_{n-2}^{-1} \ss_{n-3}^{-1} \dotsc \ss_{i+1}^{-1})$\\
		& $=$ & $ (\ss_{i+1} \dotsc \ss_{n-3} \ss_{n-2} \ss_{n-1} \ss_{n-2}^{-1} \ss_{n-3}^{-1} \dotsc \ss_{i+1}^{-1})$\\
		& & $\ss_n (\ss_{i+1} \dotsc \ss_{n-3}\ss_{n-2} \ss_{n-1} \ss_{n-2}^{-1} \ss_{n-3}^{-1} \dotsc \ss_{i+1}^{-1})$\\
		& $=$ & $ g\left(\bs{(i,n-1)}\right) g\left(\ss_n\right) g\left(\bs{(i,n-1)}\right)$.
	\end{tabular}
	
	(3) Where $i=m$ and $j=n-2$, we need to check that $g\left(\ss_n\right)$ commutes with $g\left(\bs{(\overline{1},\overline{n-2})}\right)$ and the relation
	$$g\left(\ss_n\right)g\left(\bs{(m,n-1)}\right)g\left(\ss_n\right) = g\left(\bs{(m,n-1)}\right)g\left(\ss_n\right)g\left(\bs{(m,n-1)}\right).$$
	
	We already proved this last relation
	in item (2).
	For the commuting relation,
	we have that $g\left(\bs{(\overline{1},\overline{n-2})}\right) = 
	\ss_1^{\ss_{m+2}\ss_{m+3} \dotsc \ss_{n-2}\ss_m^{-1}\ss_{m-2}^{-1} \dotsc \ss_2^{-1}}$ for $n > 5$. It follows
	that $g\left(\ss_n\right)$ commutes with $g\left(\bs{(\overline{1},\overline{n-2})}\right)$.\\
	
	(4) Where $i \neq m$ and $j = n-1$, we need to prove that
	$g\left(\ss_i\right)g\left(\bs{(i,n)}\right)g\left(\ss_i\right) = g\left(\bs{(i,n)}\right)g\left(\ss_i\right)g\left(\bs{(i,n)}\right)$ 
	and $g\left(\bs{(i,n)}\right)$ commutes with all the images by $g$ of 
	elements of $\RRefl$ that
	correspond to reflections in the string of item (4) in Lemma~\ref{LemRedDecEqt1}:
	\[
	s_{m+2} \dotsc s_{n-2}s_{n-1}(i+1,n-1)s_{i+2} 
	\dotsc s_{m-1}s_m (\overline{1},\overline{m})s_2 \dotsc s_{i-1}.
	\]
	
	We have that $g\left(\bs{(i,n)}\right) = \ss_n^{\ss_{n-1}^{-1}\ss_{n-2}^{-1} \dotsc \ss_{i+1}^{-1}}$. Then, we get
	\begin{center}\begin{tabular}{lll}
			$g\left(\ss_i\right)g\left(\bs{(i,n)}\right)g\left(\ss_i\right)$ & $=$ & $\ss_i(\ss_{i+1} \dotsc \ss_{n-2}\underline{\ss_{n-1}\ss_n\ss_{n-1}^{-1}}\ss_{n-2}^{-1} \dotsc \ss_{i+1}^{-1})\ss_i$\\
			& $=$ & $\ss_i(\ss_{i+1} \dotsc \ss_{n-2}\underline{\ss_{n}^{-1}}\ss_{n-1}\underline{\ss_{n}}\ss_{n-2}^{-1} \dotsc \ss_{i+1}^{-1})\ss_i$\\
			&$=$ & $\ss_n^{-1}\ss_i(i,n-1)\ss_i\ss_n $\\
			& $=$ & $ \ss_n^{-1}\bs{(i,n-1)}\ss_i\bs{(i,n-1)}\ss_n$ \hbox{ by induction hypothesis}\\
			& $=$ & $ \ss_n^{-1}\bs{(i,n-1)}\ss_n\ss_i\ss_n^{-1}\bs{(i,n-1)}\ss_n$,
	\end{tabular}\end{center}
	with $ \ss_n^{-1}g\left(\bs{(i,n-1)}\right)\ss_n = g\left(\bs{(i,n)}\right)$. Hence we get $$g\left(\ss_i\right)g\left(\bs{(i,n)}\right)g\left(\ss_i\right) = g\left(\bs{(i,n)}\right)g\left(\ss_i\right)g\left(\bs{(i,n)}\right).$$
	
	It is clear that $g\left(\bs{(i,n)}\right)$ commutes with $\ss_2$, $\ss_3$, $\dotsc$, $\ss_{i-1}$.
	
	Let us show that $g\left(\bs{(i,n)}\right)$ commutes with $\ss_m$. We have seen that $g\left(\bs{(i,n)}\right) = \bs{s_n^{-1}}g\left(\bs{(i,n-1)}\right)\bs{s_n}$. Then we get
	\begin{center}\begin{tabular}{lll}
			$g\left(\bs{(i,n)}\right)\ss_m $ & $=$ & $\ss_n^{-1}g\left(\bs{(i,n-1)}\right)\underline{\ss_n \ss_m}$\\
			& $=$ & $\ss_n^{-1}g\left(\bs{(i,n-1)}\right)\ss_m \ss_n$\\
			& $=$ & $ \underline{\ss_n^{-1}\ss_m}g\left(\bs{(i,n-1)}\right)\ss_n$ \hbox{ by induction hypothesis}\\
			& $=$ & $ \ss_m\ss_n^{-1}g\left(\bs{(i,n-1)}\right)\ss_n$\\
			& $=$ & $\ss_mg\left(\bs{(i,n)}\right)$.
	\end{tabular}\end{center}
	
	Similarly $g\left(\bs{(i,n)}\right)$ commutes with $\ss_{m-1}$, $\ss_{m-2}$, $\dotsc$, $\ss_{i+2}$, $\ss_{n-2}$, $\bs{s_{n-3}}$, $\dotsc$, $\ss_{m+2}$, and $g\left(\bs{(\overline{1},\overline{m})}\right)$. It is done by just replacing $\ss_m$ by each of the previous elements.
	
	Let us now show that $g\left(\bs{(i,n)}\right)$ commutes with $\ss_{n-1}$. We have that
	\begin{center}\begin{tabular}{lll}
			$g\left(\bs{(i,n)}\right)\ss_{n-1} $ & $=$ & $(\ss_{i+1} \dotsc \ss_{n-2}\ss_{n-1}) \ss_n (\ss_{n-1}^{-1} \ss_{n-2}^{-1} \dotsc \ss_{i+1}^{-1})\underline{\ss_{n-1}}$\\
			& $=$ & $(\ss_{i+1} \dotsc \ss_{n-2}\ss_{n-1}) \ss_n (\underline{\ss_{n-1}^{-1} \ss_{n-2}^{-1}\ss_{n-1}}\ss_{n-3}^{-1} \dotsc \ss_{i+1}^{-1})$\\
			& $=$ & $(\ss_{i+1} \dotsc \ss_{n-2}\ss_{n-1}) \ss_n (\underline{\ss_{n-2}} \ss_{n-1}^{-1}\ss_{n-2}^{-1} \dotsc \ss_{i+1}^{-1})$\\
			& $=$ & $\{(\ss_{i+1} \dotsc \underline{\ss_{n-2}\ss_{n-1}\ss_{n-2}}) \ss_n ( \ss_{n-1}^{-1}\ss_{n-2}^{-1} \dotsc \ss_{i+1}^{-1})$\\
			& $=$ & $(\ss_{i+1} \dotsc \ss_{n-3} \underline{\ss_{n-1}}\ss_{n-2}\ss_{n-1}) \ss_n ( \ss_{n-1}^{-1}\ss_{n-2}^{-1} \dotsc \ss_{i+1}^{-1})$\\
			& $=$ & $\ss_{n-1}g\left(\bs{(i,n)}\right)$.
	\end{tabular}\end{center}
	
	Finally, we show that $g\left(\bs{(i,n)}\right)$ commutes with $g\left(\bs{(i+1,n-1)}\right)$. 
	Since $$g\left(\bs{(i+1,n-1)}\right) = \ss_{n-1}^{\ss_{n-2}^{-1}\ss_{n-3}^{-1} \dotsc \ss_{i+2}^{-1}},$$ we get $g\left(\bs{(i+1,n-1)}\right) = \ss_{n-1}^{-1}g\left(\bs{(i+1,n-2)}\right)\ss_{n-1}$. We obtain
	
	\begin{center}\begin{tabular}{lll}
			$g\left(\bs{(i+1,n-1)}\right) g\left(\bs{(i,n)}\right)$ 
			& $=$ & $\ss_{n-1}^{-1}g\left(\bs{(i+1,n-2)}\right)\underline{\ss_{n-1}g\left(\bs{(i,n)}\right)}$\\
			& $=$ & $\ss_{n-1}^{-1}g\left(\bs{(i+1,n-2)}\right)\underline{g\left(\bs{(i,n)}\right)}\ss_{n-1}$\\
			& & by the previous case\\
			& $=$ & $\ss_{n-1}^{-1}\underline{g\left(\bs{(i+1,n-2)}\right)\ss_n^{-1}}g\left(\bs{(i,n-1)}\right)\ss_n\ss_{n-1}$\\
			& $=$ & $\ss_{n-1}^{-1}\ss_n^{-1}\underline{g\left(\bs{(i+1,n-2)}\right)g\left(\bs{(i,n-1)}\right)}\ss_n\ss_{n-1}$\\ 
			& $=$ & $\bs{s_{n-1}^{-1}s_n^{-1}}g\left(\bs{(i,n-1)}\right)\underline{g\left(\bs{(i+1,n-2)}\right)\ss_n}\ss_{n-1}$\\ 
			& & by induction hypothesis\\
			& $=$ & $\ss_{n-1}^{-1}\underline{\ss_n^{-1}g\left(\bs{(i,n-1)}\right)\ss_n}g\left(\bs{(i+1,n-2)}\right)\ss_{n-1}$\\ 
			& $=$ & $\underline{\ss_{n-1}^{-1}g\left(\bs{(i,n)}\right)}g\left(\bs{(i+1,n-2)}\right)\ss_{n-1}$\\ 
			& $=$ & $g\left(\bs{(i,n)}\right)\underline{\ss_{n-1}^{-1}g\left(\bs{(i+1,n-2)}\right)\ss_{n-1}}$\\
			& $=$ & $g\left(\bs{(i,n)}\right)g\left(\bs{(i+1,n-1)}\right)$.
	\end{tabular}\end{center}
	
	(5) Where $i = m$ and $j = n-1$, we need to check that
	$g\left(\ss_m\right)g\left(\bs{(m,n)}\right)g\left(\ss_m\right) = g\left(\bs{(m,n)}\right)g\left(\ss_m\right)g\left(\bs{(m,n)}\right)$ and that $g\left(\bs{(m,n)}\right)$ commutes with all the images by $g$ of the
	elements of $\RRefl$ that
	correspond to the reflections in the string from item (5) in Lemma~\ref{LemRedDecEqt1}: 
	\[s_{m+2} \dotsc s_{n-2}s_{n-1}(\overline{1},\overline{n-1}) s_2 \dotsc s_{m-1}.\]
	
	This is shown by following the same arguments as in (4).\\
	
	(6) Where $i \neq m$ and $j = n$, we need to check that $\ss_n$ commutes
	with $g\left(\bs{(\overline{1},\overline{m})}\right)$ and $g\left(\bs{(\overline{i},\overline{m+1})}\right)$
	This is straightforward to show.\\
	
	(7) Where $i=m$ and $j=n$, the image by $g$ of the copies of the reflections in the decomposition in item (7) of Lemma~\ref{LemRedDecEqt1} are only elements of $\SS$, so there is nothing to check.
\end{proof}

\begin{lemma}\label{LemLiftRedDecEqt2}
	Suppose that $w_0=w(\overline{i},\overline{j})$, with $1\leq i <m$, $m+1\leq j <n$,
	so that we are in the situation of Equation~\ref{Eqt2}. Let $\refl_1\refl_2 \dotsc \refl_{n-1}$ be the reduced decomposition of $w_0$ described in Lemma~\ref{LemRedDecEqt2}.
	Then we can deduce from the relations of the presentation of $\GG$ given in Theorem~\ref{ThmPresDn1} that $g\left(\rrefl_{n-1}\right) g\left(\rrefl_{n-2}\right) g\left(\rrefl_{n-1}\right) =g\left(\rrefl_{n-2}\right) g\left(\rrefl_{n-1}\right)  g\left(\rrefl_{n-2}\right)$, and that $g\left(\rrefl_{n-1}\right)$ commutes with
	each of the elements $g\left(\rrefl_{k}\right)$ with $k<n-2$.
\end{lemma}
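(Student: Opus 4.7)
The plan is to mirror the case-by-case proof of Lemma~\ref{LemLiftRedDecEqt1}, applied now to the three reduced decompositions in Lemma~\ref{LemRedDecEqt2}. In each subcase I would first identify $\refl_{n-1}$ and $\refl_{n-2}$ at the end of the string: for case~(1) with $j<n-2$, $\refl_{n-1}=s_n$ and $\refl_{n-2}=s_{n-1}$; for case~(2) with $j=n-2$, $\refl_{n-1}=s_n$ and $\refl_{n-2}=(\overline{i},\overline{n-1})$; for case~(3) with $j=n-1$, $\refl_{n-1}=(\overline{i},\overline{n})$ and $\refl_{n-2}=s_i$. I would then expand each $g(\rrefl_k)$ into a word over $\SS\cup\SS^{-1}$ using Proposition~\ref{PropLiftReflections}, and reduce the required braid and commutation identities to $\Delta$-relations, possibly combined with the inductive hypothesis for the quasi-Coxeter element $w'=(m,\dotsc,\overline{1})(n-1,\dotsc,\overline{m+1})$ of type $D_{n-1}$.

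Case~(1) is immediate: the braid relation $\ss_{n-1}\ss_n\ss_{n-1}=\ss_n\ss_{n-1}\ss_n$ is the defining edge relation between $\ss_{n-1}$ and $\ss_n$ in $\Delta$, and for every $k<n-2$ the expansion of $g(\rrefl_k)$ involves only indices in $\{1,\dotsc,n-2,m+1\}$, each of which commutes with $\ss_n$ by $\Delta$. Case~(2) follows the template of item~(2) of the proof of Lemma~\ref{LemLiftRedDecEqt1}: writing $g\bigl(\bs{(\overline{i},\overline{n-1})}\bigr)=\ss_1^{\ss_{m+2}\dotsc \ss_{n-1}\ss_m^{-1}\dotsc \ss_{i+1}^{-1}}$, I would commute $\ss_n$ past every factor of the conjugator except the central $\ss_{n-1}$, and convert $\ss_{n-1}\ss_n\ss_{n-1}^{-1}$ into $\ss_n^{-1}\ss_{n-1}\ss_n$ via the braid $\ss_n\ss_{n-1}\ss_n=\ss_{n-1}\ss_n\ss_{n-1}$; the braid identity between $\rrefl_{n-1}$ and $\rrefl_{n-2}$ then drops out by the same rearrangement displayed in Lemma~\ref{LemLiftRedDecEqt1}, while commutativity with the other $\refl_k$ again follows by index inspection.

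The main obstacle is case~(3), in which $\refl_{n-1}=\bs{(\overline{i},\overline{n})}$ is itself a long conjugate involving $\ss_n$. The key step I would establish first is the identity
\[
g\bigl(\bs{(\overline{i},\overline{n})}\bigr)=\ss_n^{-1}\,g\bigl(\bs{(\overline{i},\overline{n-1})}\bigr)\,\ss_n,
\]
obtained by expanding Equation~\ref{EqtLiftDecomI} for $j=n$ and moving the outermost $\ss_n^{\pm 1}$ past the short blocks $\ss_{i+1}\dotsc \ss_m$ and $\ss_m^{-1}\dotsc \ss_{i+1}^{-1}$, all of whose factors commute with $\ss_n$ by $\Delta$. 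Since $\ss_n$ also commutes with $\ss_i$ (as $i\leq m-1\leq n-3$), conjugating by $\ss_n^{\pm 1}$ reduces the braid relation between $g\bigl(\bs{(\overline{i},\overline{n})}\bigr)$ and $\ss_i$ to the braid relation between $g\bigl(\bs{(\overline{i},\overline{n-1})}\bigr)$ and $\ss_i$ in the lift of $G([1,w'])$, which is available by the inductive hypothesis on~$n$. Commutativity of $g\bigl(\bs{(\overline{i},\overline{n})}\bigr)$ with the remaining $\refl_k$ is treated analogously, case by case: for each $\refl_k$ whose $g$-image is supported on indices $\leq n-1$, conjugating by $\ss_n^{\pm 1}$ reduces the assertion to a commutation in the $w'$-presentation, which is again supplied by induction. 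The only subtlety requiring care is $\refl_k=\bs{(\overline{i+1},\overline{n-1})}$, whose expansion also contains $\ss_{n-1}$; there the same braid manipulation as in case~(2) is invoked before the inductive step applies. Once all these reductions have been carried out, the lemma is established.
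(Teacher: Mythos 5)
Your proposal follows essentially the same route as the paper's proof: the same identification of $\refl_{n-1},\refl_{n-2}$ in each of the three subcases of Lemma~\ref{LemRedDecEqt2}, the same isolation of the single non-commuting conjugating factor $\ss_{n-1}$ in case~(2), the same identity $g\bigl(\bs{(\overline{i},\overline{n})}\bigr)=\ss_n^{-1}g\bigl(\bs{(\overline{i},\overline{n-1})}\bigr)\ss_n$ in case~(3), and the same reduction of the remaining braid and commutation identities to the inductive hypothesis for $w'$ after conjugating by $\ss_n$.

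There is one concrete misstep in case~(3). The string of Lemma~\ref{LemRedDecEqt2}(3) contains $\refl_k=s_{n-1}$ among the reflections with $k<n-2$, so the lemma requires $[\,g\bigl(\bs{(\overline{i},\overline{n})}\bigr),\ss_{n-1}\,]=1$. Your blanket claim that for every $\refl_k$ supported on indices $\leq n-1$ conjugation by $\ss_n^{\pm1}$ reduces the commutation to a $w'$-relation fails here, because $\ss_{n-1}$ does not commute with $\ss_n$: conjugating turns $\ss_{n-1}$ into $\ss_n\ss_{n-1}\ss_n^{-1}$, which is not a generator of the $w'$-presentation. So $(\overline{i+1},\overline{n-1})$ is not "the only subtlety"; $s_{n-1}$ is a second one, and the paper handles it by a separate explicit computation expanding $g\bigl(\bs{(\overline{i},\overline{n})}\bigr)=\ss_n^{-1}\ss_{n-1}^{-1}g\bigl(\bs{(\overline{i},\overline{n-2})}\bigr)\ss_{n-1}\ss_n$ and shuffling $\ss_{n-1}$ through with the braid relation $\ss_{n-1}\ss_n\ss_{n-1}=\ss_n\ss_{n-1}\ss_n$ and the commutation of $g\bigl(\bs{(\overline{i},\overline{n-2})}\bigr)$ with $\ss_n$. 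This is repairable with exactly the braid manipulation you already deploy for $(\overline{i+1},\overline{n-1})$, but as written the case is not covered.
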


\begin{proof}
	We consider the three different possible decompositions of $w_0$ that are
	described in Lemma~\ref{LemRedDecEqt2}. In each case we do not need to consider the relations between $g(\rrefl_{n})$ 
	and $g(\rrefl_k)$ if both elements are within the set $\SS$.
	
	(1) Where $i \neq m$ and $j \neq n-2, n-1,n$, we need to check that
	$g\left(\ss_n\right)$ commutes with $g\left(\bs{(\overline{1},\overline{m})}\right)$, $g\left(\bs{(\overline{i},\overline{j+1})}\right)$, and $g\left(\bs{(\overline{i+1},\overline{j})}\right)$. 
	
	The fact that $g\left(\ss_n\right) = \ss_n$ commutes with $g\left(\bs{(\overline{1},\overline{m})}\right)$ is done in Lemma \ref{LemLiftRedDecEqt1}(1). We have that
	$g\left(\bs{(\overline{i},\overline{j+1})}\right) = \ss_1^{\ss_{m+2}\ss_{m+3}\dotsc \ss_{j+1}\ss_m^{-1}\ss_{m-1}^{-1}\dotsc \ss_{i+1}^{-1}}$, and $g\left(\bs{(\overline{i+1},\overline{j})}\right) = \ss_1^{\ss_{m+2}\ss_{m+3}\dotsc \ss_j\ss_m^{-1}\ss_{m-1}^{-1}\dotsc \ss_{i+2}^{-1}}$ that both obviously commute with $\ss_n$.\\
	
	(2) Where $i \neq m$ and $j = n-2$, we need to check that
	$g\left(\ss_n\right) = \ss_n$ commutes with $g\left(\bs{(\overline{1},\overline{m})}\right)$, $g\left(\bs{(\overline{i+1},\overline{n-2})}\right)$, and check that $g\left(\bs{(\overline{i},\overline{n-1})}\right)g\left(\ss_n\right)g\left(\bs{(\overline{i},\overline{n-1})}\right) = g\left(\ss_n\right) g\left(\bs{(\overline{i},\overline{n-1})}\right) g\left(\ss_n\right)$. 
	
	The fact that $\ss_n$ commutes with $g\left(\bs{(\overline{1},\overline{m})}\right)$ and $g\left(\bs{(\overline{i+1},\overline{n-2})}\right)$ is done in Lemma \ref{LemLiftRedDecEqt2}(1).
	For the last relation, we have that
	\begin{center}\begin{tabular}{ll}
			$g\left(\bs{(\overline{i},\overline{n-1})}\right)\ss_n g\left(\bs{(\overline{i},\overline{n-1})}\right)$ & $=$ $\ss_{n-1}^{-1}g\left(\bs{(\overline{i},\overline{n-2})}\right)\underline{\ss_{n-1} \ss_n \ss_{n-1}^{-1}}g\left(\bs{(\overline{i},\overline{n-2})}\right)\ss_{n-1}$\\
			& $=$ $\ss_{n-1}^{-1}\underline{g\left(\bs{(\overline{i},\overline{n-2})}\right)\ss_n^{-1}}\ss_{n-1}\underline{\ss_ng\left(\bs{(\overline{i},\overline{n-2})}\right)}\ss_{n-1}$\\
			& $=$ $\ss_{n-1}^{-1}\ss_n^{-1}\underline{g\left(\bs{(\overline{i},\overline{n-2})}\right)\ss_{n-1}g\left(\bs{(\overline{i},\overline{n-2})}\right)}\ss_n\ss_{n-1}$\\
			& $=$ $\underline{\ss_{n-1}^{-1}\ss_n^{-1}\ss_{n-1}}g\left(\bs{(\overline{i},\overline{n-2})}\right)\underline{\ss_{n-1}\ss_n\ss_{n-1}}$\\
			& by induction hypothesis\\
			& $=$ $\ss_n\ss_{n-1}^{-1}\underline{\ss_n^{-1}g\left(\bs{(\overline{i},\overline{n-2})}\right)}\ss_n\ss_{n-1}\ss_n$\\ 
			& $=$ $\ss_n\ss_{n-1}^{-1}g\left(\bs{(\overline{i},\overline{n-2})}\right)\ss_n^{-1}\ss_n\ss_{n-1}\ss_n$\\ 
			& $=$ $\ss_ng\left(\bs{(\overline{i},\overline{n-1})}\right)\ss_n$.
	\end{tabular}\end{center}
	
	(3) Where $i \neq m$ and $j = n-1$, we need to check that $\ss_i g\left(\bs{(\overline{i},\overline{n})}\right) \ss_i = g\left(\bs{(\overline{i},\overline{n})}\right) \ss_i g\left(\bs{(\overline{i},\overline{n})}\right)$ and that $g\left(\bs{(\overline{i},\overline{n})}\right)$ commutes with the images by $g$ of elements that correspond to the reflections in the string from item (3) in Lemma~\ref{LemRedDecEqt2}:
	$$s_{m+2} \dotsc s_{n-2}s_{n-1}(\overline{i+1},\overline{n-1})s_{i+2} \dotsc s_{m-1}s_m(\overline{1},\overline{m})s_2 \dotsc s_{i-1}.$$
	
	First, we have
	\begin{center}\begin{tabular}{p{2cm}p{0.5cm}p{10cm}}
			$\ss_ig\left(\bs{(\overline{i},\overline{n})}\right) \ss_i $ & $=$ & $\underline{\ss_i\ss_n^{-1}}g\left(\bs{(\overline{i},\overline{n-1})}\right) \underline{\ss_n\ss_i}$\\
			& $=$ & $\ss_n^{-1}\underline{\ss_ig\left(\bs{(\overline{i},\overline{n-1})}\right)\ss_i}\ss_n$\\
			& $=$ & $\ss_n^{-1}g\left(\bs{(\overline{i},\overline{n-1})}\right)\ss_ig\left(\bs{(\overline{i},\overline{n-1})}\right)\ss_n$ \hbox{by induction hypothesis}\\
			& $=$ & $\underline{\ss_n^{-1}g\left(\bs{(\overline{i},\overline{n-1})}\right)\ss_n}\ss_i\underline{\ss_n^{-1}g\left(\bs{(\overline{i},\overline{n-1})}\right)\ss_n}$\\
			& $=$ & $g\left(\bs{(\overline{i},\overline{n})}\right)\ss_ig\left(\bs{(\overline{i},\overline{n})}\right)$.
	\end{tabular}\end{center}
	Since $\ss_2$, $\dotsc$, $\ss_{i-2}$, $\ss_{i-1}$, $\ss_{i+2}$, $\dotsc$, $\ss_{m-1}$, $\ss_m$, $\ss_{m+2}$, $\dotsc$, $\ss_{n-3}$, $\ss_{n-2}$ commute with $\ss_n$, then they commute with $g\left(\bs{(\overline{i},\overline{n})}\right) = \ss_n^{-1}g\left(\bs{(\overline{i},\overline{n-1})}\right)\ss_n$ by applying the induction hypothesis. Because $g\left(\bs{(\overline{1},\overline{m})}\right)$ commutes with $\ss_n$, we also get that $g\left(\bs{(\overline{i},\overline{n})}\right)$ commutes with $g\left(\bs{(\overline{1},\overline{m})}\right)$ by applying the same argument. Now, we prove that $g\left(\bs{(\overline{i},\overline{n})}\right)$ commutes with $\ss_{n-1}$. Actually, we have
	\begin{center}\begin{tabular}{lll}
			$g\left(\bs{(\overline{i},\overline{n})}\right) \ss_{n-1} $ & $=$ & $\ss_n^{-1}\ss_{n-1}^{-1}g\left(\bs{(\overline{i},\overline{n-2})}\right)\underline{\ss_{n-1}\ss_n\ss_{n-1}}$\\
			& $=$ & $\ss_n^{-1}\ss_{n-1}^{-1}\underline{g\left(\bs{(\overline{i},\overline{n-2})}\right)\ss_n}\ss_{n-1}\ss_n$\\
			& $=$ & $\underline{\ss_n^{-1}\ss_{n-1}^{-1}\ss_n}g\left(\bs{(\overline{i},\overline{n-2})}\right)\ss_{n-1}\ss_n$\\
			& $=$ & $\ss_{n-1}\ss_n^{-1}\underline{\ss_{n-1}^{-1}g\left(\bs{(\overline{i},\overline{n-2})}\right)\ss_{n-1}}\ss_n$\\
			& $=$ & $\ss_{n-1}\underline{\ss_n^{-1}g\left(\bs{(\overline{i},\overline{n-1})}\right)\ss_n}$\\
			& $=$ & $\ss_{n-1}g\left(\bs{(\overline{i},\overline{n})}\right)$.
	\end{tabular}\end{center}
	
	Finally, we show that $g\left(\bs{(\overline{i},\overline{n})}\right)$ commutes with $g\left(\bs{(\overline{i+1},\overline{n-1})}\right)$. We have that  $g\left(\bs{(\overline{i},\overline{n})}\right) g\left(\bs{(\overline{i+1},\overline{n-1})}\right)$ is equal to 
	\begin{center}\begin{tabular}{l}
			$\ss_n^{-1}g\left(\bs{(\overline{i},\overline{n-1})}\right)\ss_n\ss_{n-1}^{-1}g\left(\bs{(\overline{i+1},\overline{n-2})}\right)\ss_{n-1} = $\\
			$\ss_n^{-1}\ss_{n-1}^{-1}g\left(\bs{(\overline{i},\overline{n-2})}\right)\underline{\ss_{n-1}\ss_n\ss_{n-1}^{-1}}g\left(\bs{(\overline{i+1},\overline{n-2})}\right)\ss_{n-1} = $\\
			$\ss_n^{-1}\ss_{n-1}^{-1}\underline{g\left(\bs{(\overline{i},\overline{n-2})}\right)\ss_n^{-1}}\ss_{n-1}\underline{\ss_ng\left(\bs{(\overline{i+1},\overline{n-2})}\right)}\ss_{n-1} = $\\
			$\underline{\ss_n^{-1}\ss_{n-1}^{-1}\ss_n^{-1}}g\left(\bs{(\overline{i},\overline{n-2})}\right)\ss_{n-1}g\left(\bs{(\overline{i+1},\overline{n-2})}\right)\ss_n\ss_{n-1} = $\\
			$\ss_{n-1}^{-1}\ss_n^{-1}\underline{\ss_{n-1}^{-1}g\left(\bs{(\overline{i},\overline{n-2})}\right)\ss_{n-1}}g\left(\bs{(\overline{i+1},\overline{n-2})}\right)\ss_n\ss_{n-1} = $\\
			$\ss_{n-1}^{-1}\ss_n^{-1}\underline{g\left(\bs{(\overline{i},\overline{n-1})}\right)g\left(\bs{(\overline{i+1},\overline{n-2})}\right)}\ss_n\ss_{n-1} = $ by induction hypothesis\\
			$\ss_{n-1}^{-1}\underline{\ss_n^{-1}g\left(\bs{(\overline{i+1},\overline{n-2})}\right)g\left(\bs{(\overline{i},\overline{n-1})}\right)}\ss_n\ss_{n-1} = $\\
			$\ss_{n-1}^{-1}g\left(\bs{(\overline{i+1},\overline{n-2})}\right)\underline{\ss_n^{-1}g\left(\bs{(\overline{i},\overline{n-1})}\right)\ss_n}\ss_{n-1} = $\\
			$\ss_{n-1}^{-1}g\left(\bs{(\overline{i+1},\overline{n-2})}\right)\underline{g\left(\bs{(\overline{i},\overline{n})}\right)\ss_{n-1}} = $ by the previous case\\
			$\ss_{n-1}^{-1}g\left(\bs{(\overline{i+1},\overline{n-2})}\right)\ss_{n-1}g\left(\bs{(\overline{i},\overline{n})}\right) = $\\
			$g\left(\bs{(\overline{i+1},\overline{n-1})}\right)g\left(\bs{(\overline{i},\overline{n})}\right)$.
	\end{tabular}\end{center}
\end{proof}

\begin{lemma}\label{LemLiftRedDecEqt3}
	Suppose that $w_0=w(\overline{m},\overline{j})$, with $m+1\leq j <n$,
	so that we are in the situation of Equation~\ref{Eqt3}. Let $\refl_1\refl_2 \dotsc \refl_{n-1}$ be the reduced decomposition of $w_0$ described in Lemma~\ref{LemRedDecEqt3}.
	Then we can deduce from the relations of the presentation of $\GG$ given in Theorem~\ref{ThmPresDn1} that $g\left(\rrefl_{n-1}\right) g\left(\rrefl_{n-2}\right) g\left(\rrefl_{n-1}\right) = g\left(\rrefl_{n-2}\right) g\left(\rrefl_{n-1}\right)  g\left(\rrefl_{n-2}\right)$, and that  $g\left(\rrefl_{n-1}\right)$ commutes with
	each of the elements $g\left(\rrefl_{k}\right)$ with $k<n-2$.
\end{lemma}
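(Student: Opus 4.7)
The plan is to follow the template established in Lemmas~\ref{LemLiftRedDecEqt1} and~\ref{LemLiftRedDecEqt2}, handling the three sub-cases of Lemma~\ref{LemRedDecEqt3} in turn. In each sub-case, I need to verify the braid relation between $g(\rrefl_{n-1})$ and $g(\rrefl_{n-2})$ together with the commutation relations between $g(\rrefl_{n-1})$ and each $g(\rrefl_k)$ for $k<n-2$; the relations among $g(\rrefl_1),\dotsc,g(\rrefl_{n-2})$ are supplied by the induction hypothesis on $n$ applied to the sub-divisor $w_1=w_0(\overline{m},\overline{n-1})^{-1}\in P=\langle s_1,\dotsc,s_{n-1}\rangle$.

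For sub-case (1), where $j<n-2$, we have $\rrefl_{n-1}=s_n$ and $\rrefl_{n-2}=s_{n-1}$. By Proposition~\ref{PropLiftReflections}, the images $g((1,j))=\ss_j^{\ss_{j-1}^{-1}\dotsb\ss_2^{-1}}$ and $g((\overline{m},\overline{j+1}))=\ss_1^{\ss_{m+2}\dotsb\ss_{j+1}}$ contain no occurrence of $\ss_n$ (since $j+1\le n-2$), and the remaining reflections of the decomposition are Carter generators with index strictly less than $n$. Thus every relation to be checked is an immediate consequence of $\Delta$: $\ss_n$ braids with $\ss_{n-1}$, and commutes with every other generator appearing. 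For sub-case (2), where $j=n-2$, the commutations of $\ss_n$ with the other images are handled as in sub-case (1), since $g((1,n-2))$ and all other non-Carter images are expressible in $\ss_2,\dotsc,\ss_{n-2}$. The braid relation $\ss_n\,g((\overline{m},\overline{n-1}))\,\ss_n=g((\overline{m},\overline{n-1}))\,\ss_n\,g((\overline{m},\overline{n-1}))$ I would prove by imitating the computation in Lemma~\ref{LemLiftRedDecEqt2}(2): write $g((\overline{m},\overline{n-1}))=\ss_{n-1}^{-1}\,g((\overline{m},\overline{n-2}))\,\ss_{n-1}$, push $\ss_n$ across $g((\overline{m},\overline{n-2}))$ (which commutes with $\ss_n$ by the inductive hypothesis on $n$), and then apply the rewriting $\ss_{n-1}^{-1}\ss_n\ss_{n-1}=\ss_n\ss_{n-1}\ss_n^{-1}$ derived from the $\ss_{n-1},\ss_n$ braid relation.

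For sub-case (3), where $j=n-1$, we have $\rrefl_{n-1}=(\overline{m},\overline{n})$ and $\rrefl_{n-2}=s_m$. Writing $X=\ss_{m+2}\ss_{m+3}\dotsb\ss_n$, Proposition~\ref{PropLiftReflections} gives $g((\overline{m},\overline{n}))=X^{-1}\ss_1 X$. Since $\ss_m$ commutes with every factor of $X$ (using only the Carter relations), the desired braid relation with $\ss_m$ reduces after cancellation to $\ss_1\ss_m\ss_1=\ss_m\ss_1\ss_m$, which is an edge relation of $\Delta$. The commutations with $\ss_k$ for $k\in\{2,\dotsc,m-1\}$ are direct, because such a $\ss_k$ commutes with both $\ss_1$ and every factor of $X$. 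For $k\in\{m+2,\dotsc,n-2\}$, I would use $g((\overline{m},\overline{n}))=\ss_n^{-1}\,g((\overline{m},\overline{n-1}))\,\ss_n$ together with the inductive hypothesis on $n$ and the commutation of $\ss_k$ with $\ss_n$. For $k=n-1$, a short rewriting gives $X\ss_{n-1}X^{-1}=\ss_n$ (using the identity $\ss_{n-1}\ss_n\ss_{n-1}^{-1}=\ss_n^{-1}\ss_{n-1}\ss_n$ and the commutations of $\ss_n$ with $\ss_{m+2},\dotsc,\ss_{n-2}$), so that the commutation of $g((\overline{m},\overline{n}))$ with $\ss_{n-1}$ is equivalent to $[\ss_1,\ss_n]=1$, which holds since $n\ge m+3$ places $\ss_n$ outside the $4$-cycle of $\Delta$ containing $\ss_1$.

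The main obstacle I anticipate is the commutation of $g((\overline{m},\overline{n}))$ with $g((1,n-1))=(\ss_2\dotsb\ss_{n-2})\,\ss_{n-1}\,(\ss_{n-2}^{-1}\dotsb\ss_2^{-1})$, because the conjugating word contains both $\ss_m$ and $\ss_{m+1}$, neither of which commutes with $g((\overline{m},\overline{n}))$. My strategy is to split the conjugating word as $Y_1\,Y_2\,Y_3$ with $Y_1=\ss_2\dotsb\ss_{m-1}$, $Y_2=\ss_m\ss_{m+1}$ and $Y_3=\ss_{m+2}\dotsb\ss_{n-2}$. The outer blocks $Y_1,Y_3$ commute with $g((\overline{m},\overline{n}))$ by the cases already handled, so the problem reduces to showing that $g((\overline{m},\overline{n}))$ centralises the element $Y_2\,Y_3\,\ss_{n-1}\,Y_3^{-1}\,Y_2^{-1}$. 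Pulling $g((\overline{m},\overline{n}))$ past $Y_2=\ss_m\ss_{m+1}$ twice by means of the braid relations (each of which, by the computation already done for $\ss_m$ alone, replaces $g((\overline{m},\overline{n}))$ by $g((\overline{m-1},\overline{n}))$ and then by an $\ss_1$-conjugate), the residual commutation boils down to an identity that is exactly the twisted cycle commutator relator $[\ss_1,\ss_m^{-1}\ss_{m+1}\ss_{m+2}\ss_{m+1}^{-1}\ss_m]=1$ of the presentation of $\GG$, which up to this point has not been used. This is where I expect the delicate bookkeeping of the argument to lie.
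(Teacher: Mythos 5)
Your sub-cases (1) and (2) coincide with the paper's argument, and much of your sub-case (3) is fine: the observation that $\ss_m$ commutes with every factor of $X=\ss_{m+2}\dotsb\ss_n$ so that the braid relation with $g(\bs{(\overline{m},\overline{n})})=X^{-1}\ss_1X$ collapses to the edge relation $\ss_1\ss_m\ss_1=\ss_m\ss_1\ss_m$, and the identity $X\ss_{n-1}X^{-1}=\ss_n$ reducing the $\ss_{n-1}$-commutation to $[\ss_1,\ss_n]=1$, are correct and if anything cleaner than the computations the paper imports from Lemma~\ref{LemLiftRedDecEqt2}(3).

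The gap is exactly where you locate it: the commutation of $g(\bs{(\overline{m},\overline{n})})$ with $g(\bs{(1,n-1)})$. Your proposed route --- splitting the conjugating word of $g(\bs{(1,n-1)})$ as $Y_1Y_2Y_3$, pushing $g(\bs{(\overline{m},\overline{n})})$ through $Y_2=\ss_m\ss_{m+1}$, and asserting that the residue is ``exactly'' the twisted cycle commutator relator --- is never carried out, and the claimed endpoint is not verified; as written it is a plan, not a proof, and you say so yourself. It is also an unnecessary detour. The paper disposes of this commutation by the same two-step conjugation-and-induction device it uses everywhere in the appendix (spelled out in full for the pair $g(\bs{(\overline{i},\overline{n})})$, $g(\bs{(\overline{i+1},\overline{n-1})})$ in Lemma~\ref{LemLiftRedDecEqt2}(3)): write $g(\bs{(\overline{m},\overline{n})})=\ss_n^{-1}g(\bs{(\overline{m},\overline{n-1})})\ss_n$ and $g(\bs{(1,n-1)})=\ss_{n-1}^{-1}g(\bs{(1,n-2)})\ss_{n-1}$, use that $g(\bs{(\overline{m},\overline{n-2})})$ and $g(\bs{(1,n-2)})$ commute with $\ss_n$, the braid relation between $\ss_{n-1}$ and $\ss_n$, and the induction hypothesis $[g(\bs{(\overline{m},\overline{n-1})}),g(\bs{(1,n-2)})]=1$, which holds because these two reflections are non-adjacent nodes of the string diagram $\Delta_0$ of the corresponding maximal divisor of $w'$. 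This closes the case with no direct appeal to the twisted cycle commutator relator at level $n$ (it enters only through the base cases $n=4,5$ buried in the induction). You should either complete your $Y_1Y_2Y_3$ computation in full --- including a proof, not an assertion, that the residual identity is a consequence of $\tc{\ss_1}{\ss_m}{\ss_{m+1}}{\ss_{m+2}}$ --- or replace it by the conjugation argument above.
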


\begin{proof}
	We consider the three cases of Lemma ~\ref{LemRedDecEqt3}.\\
	
	(1) Where  $j<n-2$, we need to check that $\ss_n$ commutes with
	$g\left(\bs{(1,j)}\right)$ and $g\left(\bs{(\overline{m},\overline{j+1})}\right)$. This is readily checked since $$g\left(\bs{(1,j)}\right) = \ss_j^{\ss_{j-1}^{-1}\ss_{j-2}^{-1} \dotsc \ss_2^{-1}} \hbox{ and } g\left(\bs{(\overline{m},\overline{j+1})}\right) = \ss_1^{\ss_{m+2}\ss_{m+3}\dotsc \ss_{j+1}}.$$
	
	(2) Where $j=n-2$, we need to check that $\ss_n$ commutes with
	$g\left(\bs{(1,n-2)}\right)$ and $g\left(\bs{(\overline{m},\overline{n-1})}\right)$.
	The first check is straightforward.
	For the second, 
	we have that $g\left(\bs{(\overline{m},\overline{n-1})}\right) = \ss_{n-1}^{-1}g\left(\bs{(\overline{m},\overline{n-2})}\right)\ss_{n-1}$ 
	and one shows that $$g\left(\bs{(\overline{m},\overline{n-1})}\right)\ss_n g\left(\bs{(\overline{m},\overline{n-1})}\right) = \ss_n g\left(\bs{(\overline{m},\overline{n-1})}\right) \ss_n$$
	similarly to the case $g\left(\bs{(\overline{i},\overline{n-1})}\right) \ss_n g\left(\bs{(\overline{i},\overline{n-1})}\right) = \ss_n g\left(\bs{(\overline{i},\overline{n-1})}\right) \ss_n$ 
	in the proof of Lemma \ref{LemLiftRedDecEqt2}(1).\\
	
	(3) Where $j = n-1$, we have that $g\left(\bs{(\overline{m},\overline{n})}\right) = \ss_1^{\ss_{m+2}\ss_{m+3}\dotsc \ss_n}
	= \ss_n^{-1}g\left(\bs{(\overline{m},\overline{n-1})}\right)\ss_n$. 
	All the commuting relations between $g\left(\bs{(\overline{m},\overline{n})}\right)$ and
	$\ss_{m-1}$, $\ss_{m-2}$, $\dotsc$, $\ss_{2}$ are obvious. We are 
	left to prove that $g\left(\bs{(\overline{m},\overline{n})}\right) \ss_m g\left(\bs{(\overline{m},\overline{n})}\right) = \ss_m g\left(\bs{(\overline{m},\overline{n})}\right) \ss_m$, 
	and $g\left(\bs{(\overline{m},\overline{n})}\right)$ commutes with $g\left(\bs{(1,n-1)}\right)$, $\ss_{n-1}$, $\ss_{n-2}$, $\dotsc$, $\ss_{m+2}$. This is done similarly to the proof of Lemma \ref{LemLiftRedDecEqt2}(3).
\end{proof}

The next lemma is readily checked.

\begin{lemma}\label{LemLiftRedDecEqt45}
	Suppose that $w_0=w(\overline{i},\overline{n})$, with $1\leq i \leq m$, 
	so that we are in the situation of Equation~\ref{Eqt4} or Equation~\ref{Eqt5}. An identical result to the previous lemmas holds in this situation.
\end{lemma}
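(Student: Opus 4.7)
The plan is to follow the same pattern as Lemmas~\ref{LemLiftRedDecEqt1}--\ref{LemLiftRedDecEqt3}, namely to verify that for the reduced decomposition $\refl_1\dotsc\refl_{n-1}$ produced in Lemma~\ref{LemRedDecEqt45}, the braid relation between $g(\rrefl_{n-1})$ and $g(\rrefl_{n-2})$, together with the commutation of $g(\rrefl_{n-1})$ with each $g(\rrefl_k)$ for $k<n-2$, follow from the $\GG$-relations. What makes the present case easy is that in both subcases the unique reflection of the decomposition that is not in $P=\langle s_1,\dotsc,s_{n-1}\rangle$ turns out to be the Carter generator $s_n$ itself, so $g(\rrefl_{n-1})=\ss_n$ exactly.

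Case~(2), where $i=m$ and $j=n$, is immediate: the decomposition $s_2 s_3\dotsc s_n$ consists entirely of Carter generators, and its decomposition diagram $\Delta_0$ is a string subdiagram of $\Delta$. Every braid and commuting relation implied by $\Delta_0$ is already a defining relation of $\GG$, so nothing needs to be derived.

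For case~(1), where $i\neq m$ and $j=n$, the decomposition
\[
s_{i+2}\dotsc s_m\,(\overline{1},\overline{m})\,s_2\dotsc s_i\,(i,m+1)\,s_{m+2}\dotsc s_{n-1}s_n
\]
has diagram of type $A_{n-1}$. Here $\refl_{n-2}=s_{n-1}$, so the braid relation required between $g(\rrefl_{n-1})$ and $g(\rrefl_{n-2})$ is precisely $\ss_n\ss_{n-1}\ss_n=\ss_{n-1}\ss_n\ss_{n-1}$, a relation of the Carter diagram $\Delta$. For the commutations, Proposition~\ref{PropLiftReflections} yields $g(\bs{(\overline{1},\overline{m})})=\ss_1^{\ss_m^{-1}\ss_{m-1}^{-1}\dotsc\ss_2^{-1}\ss_{m+1}}$ and $g(\bs{(i,m+1)})=\ss_{m+1}^{\ss_m^{-1}\ss_{m-1}^{-1}\dotsc\ss_{i+1}^{-1}}$; both are words in $\ss_1,\dotsc,\ss_{m+1}$. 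All the remaining reflections in the decomposition are Carter generators of index in $\{2,\dotsc,m\}\cup\{m+2,\dotsc,n-2\}$. Since the diagram $\Delta$ prescribes that $\ss_n$ commutes with every $\ss_k$ for $k\leq n-2$, each of the required commutations with $\ss_n$ follows letter-by-letter from the $\GG$-relations.

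I expect no genuine obstacle: the key observation is the locality of Proposition~\ref{PropLiftReflections}, which guarantees that the $g$-images of the non-Carter reflections $(\overline{1},\overline{m})$ and $(i,m+1)$ never involve any $\ss_k$ with $k\geq n-1$. Consequently commutation with $\ss_n$ is automatic from the diagram $\Delta$ alone, without any appeal to induction on $n$, to the twisted cycle commutator relator, or to the earlier lemmas of this appendix.
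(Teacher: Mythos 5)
Your proof is correct and supplies exactly the routine verification that the paper omits (the paper dismisses this lemma with ``readily checked'' and gives no argument). Your key observation --- that in both subcases of Lemma~\ref{LemRedDecEqt45} the last reflection of the decomposition is literally $s_n$, so $g(\rrefl_{n-1})=\ss_n$, while the $g$-images of the non-Carter reflections $(\overline{1},\overline{m})$ and $(i,m+1)$ only involve $\ss_1,\dotsc,\ss_{m+1}$ with $m+1\leq n-2$, making every required relation a letter-by-letter consequence of the relations prescribed by $\Delta$ --- is precisely why the check is immediate.
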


This finishes the situation where $w_0$ is of type I. The next two lemmas are for types II and III, respectively.

\begin{lemma}\label{LemLiftRedDecEqt678}
	Suppose that $w_0=w(i,j)$ or $w_0=w(\overline{i},\overline{j})$ with 
	$1\leq i <j \leq m$, 
	so that we are in the situation of one of Equations~\ref{Eqt6}-\ref{Eqt8}. Let $\refl_1\refl_2 \dotsc \refl_{n-1}$ be the reduced decomposition of $w_0$ as described in Section~\ref{SubsectionRedDecompII}.
	Then we can deduce from the relations of the presentation of $\GG$ that $g\left(\rrefl_{n-1}\right) g\left(\rrefl_{n-2}\right) g\left(\rrefl_{n-1}\right) =g\left(\rrefl_{n-2}\right) g\left(\rrefl_{n-1}\right)  g\left(\rrefl_{n-2}\right)$, and that $g\left(\rrefl_{n-1}\right)$ commutes with
	each of the elements $g\left(\rrefl_{k}\right)$ with $k<n-2$.
\end{lemma}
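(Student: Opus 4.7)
The plan is to reduce each of the three cases covered by Equations~\ref{Eqt6}, \ref{Eqt7}, and \ref{Eqt8} to the verification that a single distinguished reflection in the reduced decomposition of $w_0$, namely $s_n=(n-1,n)$, satisfies the required braid and commutation relators in $\GG$. In each of these equations the cycle of $w_0$ containing the index $n$ is the odd cycle $(n,n-1,\dotsc,\overline{m+1})$. When Procedure~\ref{ProcedureTypesII,III} is applied, its Step~2 handles this cycle, producing the reflections $s_n,s_{n-1},\dotsc,s_{m+2}$; the reflections arising from Step~1, Step~3 and the pair $(u,v),(\overline{u},\overline{v})$ involve only indices in $\{1,\dotsc,m+1\}$. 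Since $n\ge 6$ and $m\le\lfloor n/2\rfloor$ force $m+1\le n-2$, every such reflection lies in $P=\langle s_1,\dotsc,s_{n-1}\rangle$, so $s_n$ is the unique element of the decomposition outside $P$. Relabelling as in the proof of Proposition~\ref{PropLiftRedDecomp} we may take $\refl_{n-1}=s_n$.

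Next I would identify the neighbourhood of $s_n$ in $\Delta_0$. By Lemma~\ref{LemmaIntersectIndices}, $s_n=(n-1,n)$ shares an edge with another reflection in the decomposition precisely when that reflection involves the index $n-1$; inspecting the list above, the unique such reflection is $s_{n-1}=(n-2,n-1)$, also produced in Step~2. Hence $s_n$ is an end node of $\Delta_0$, joined only to $s_{n-1}$, in accordance with Propositions~\ref{PropCoxDiagTypeII,III} and \ref{PropQCoxDiagTypeII,III}, and we may set $\refl_{n-2}=s_{n-1}$. With this identification the claimed braid relator reads
\[
g(\rrefl_{n-1})\,g(\rrefl_{n-2})\,g(\rrefl_{n-1}) \;=\; \ss_n\ss_{n-1}\ss_n \;=\; \ss_{n-1}\ss_n\ss_{n-1} \;=\; g(\rrefl_{n-2})\,g(\rrefl_{n-1})\,g(\rrefl_{n-2}),
\]
which is precisely the braid relation between $\ss_{n-1}$ and $\ss_n$ read off from the Carter diagram $\Delta$ of $\GG$.

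For the commutation relators, the plan is to invoke Proposition~\ref{PropLiftReflections} to note that, in each of the three equations, the image $g(\rrefl_k)$ of every remaining reflection is a word in Carter generators $\ss_i$ with $i\le n-2$: the Step~2 images $g(s_{m+2}),\dotsc,g(s_{n-2})$ are themselves such generators, while the $g$-images arising from Steps~1 and 3 and from the pair $(u,v),(\overline{u},\overline{v})$ involve only indices $i\le m+1\le n-2$. Since $\Delta$ places no edge between $\ss_n$ and any $\ss_i$ with $i\le n-2$, each such generator commutes with $\ss_n$ in $\GG$, and the commutation $[g(\rrefl_{n-1}),g(\rrefl_k)]=1$ follows for every $k\le n-3$. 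The only real obstacle is the per-case verification that the lists of reflections produced by Procedure~\ref{ProcedureTypesII,III} are as described, and that no $\ss_{n-1}$ sneaks into any $g(\rrefl_k)$ with $k\ne n-2$; this is a routine inspection of Equations~\ref{Eqt6}, \ref{Eqt7}, and \ref{Eqt8} using the explicit formulas of Proposition~\ref{PropLiftReflections}.
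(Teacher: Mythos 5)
Your proposal is correct and follows essentially the same route as the paper's proof: identify $g(\rrefl_{n-1})=\ss_n$ and $g(\rrefl_{n-2})=\ss_{n-1}$, observe that their braid relation is a defining relation of $\GG$, and note that every other reflection in the decomposition has $g$-image a word in generators $\ss_i$ with $i\le n-2$, each of which commutes with $\ss_n$ in $\Delta$. You merely make explicit the bookkeeping (which reflections each step of Procedure~\ref{ProcedureTypesII,III} produces, and the bound $m+1\le n-2$) that the paper's proof declares obvious.
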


\begin{proof}
	
	In the case of this lemma, we have that $g\left(\rrefl_{n-1}\right) = \ss_n$ and $g\left(\rrefl_{n-2}\right) = \ss_{n-1}$, so the braid relation $\ss_n \ss_{n-1} \ss_n = \ss_{n-1} \ss_n \ss_{n-1}$ is clearly a consequence of the relations of $\GG$. The commuting relations are also obvious since in the situation of Equations~\ref{Eqt6}-\ref{Eqt8}, the indices $i,j$ are such that $1 \leq i < j \leq m$, so that they are far away from $n$ (we have $i,j < n-2$). Hence $\ss_n$ obviously commutes with the image by $g$ of the elements $\rrefl_i$ corresponding to the reflections in the reduced decomposition of $w_0$ described in Section \ref{SubsectionRedDecompII}.  
\end{proof}

\begin{lemma}\label{LemLiftRedDecEqt91011}
	Suppose that $w_0=w(i,j)$ or $w_0=w(\overline{i},\overline{j})$ with 
	$m+1\leq i <j \leq n$, 
	so that we are in the situation of one of Equations~\ref{Eqt9}--\ref{Eqt11}. Let $\refl_1\refl_2 \dotsc \refl_{n-1}$ be the reduced decomposition of $w_0$ as described in Section~\ref{SubsectionRedDecompII}.
	Then we can deduce from the relations of the presentation of $\GG$ that $g\left(\rrefl_{n-1}\right) g\left(\rrefl_{n-2}\right) g\left(\rrefl_{n-1}\right) =g\left(\rrefl_{n-2}\right) g\left(\rrefl_{n-1}\right)  g\left(\rrefl_{n-2}\right)$, and that $g\left(\rrefl_{n-1}\right)$ commutes with
	each of the elements $g\left(\rrefl_{k}\right)$ with $k<n-2$, except for Equation~\ref{Eqt11} (with $i=n-1$) where we need to show one additional non-commuting relation.
\end{lemma}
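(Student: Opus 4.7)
The proof will follow the template set by Lemmas~\ref{LemLiftRedDecEqt1}--\ref{LemLiftRedDecEqt45}: for each subcase of Equations~\ref{Eqt9}--\ref{Eqt11} (determined by whether $j$ equals $n$, $n-1$, or is smaller, and by whether $i = n-1$ in the last equation), I identify the reduced decomposition of $w_0$ produced by Procedure~\ref{ProcedureTypesII,III} and Proposition~\ref{PropReducedDecompII,III}, compute each $g(\rrefl_k)$ via Proposition~\ref{PropLiftReflections}, and derive the claimed braid and commutation relations from the Carter relations of $\GG$ (together with the twisted cycle commutator relator where necessary). The conjugation technique used repeatedly in the earlier lemmas---writing $g(\bs{(i,n)})=\ss_n^{-1} g(\bs{(i,n-1)})\ss_n$ so as to transfer an identity from the subgroup $\langle \ss_1,\ldots,\ss_{n-1}\rangle$ via the inductive hypothesis on~$w'$---remains the central tool.

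For Equations~\ref{Eqt9} and~\ref{Eqt10}, and for Equation~\ref{Eqt11} with $m+1\le i\le n-2$, the cycle of~$w_0$ containing $n$ has length at least~$2$; Step~$2$ of Procedure~\ref{ProcedureTypesII,III} therefore contributes $s_n=(n-1,n)$ as its first reflection, and in the reversed ordering of Proposition~\ref{PropReducedDecompII,III} this $s_n$ appears at the end of the decomposition (when Step~$1$ is empty) or is preceded by the reflections produced from Step~$1$ reducing the even cycle $z$. In every subcase the last two reflections $\refl_{n-1}$ and $\refl_{n-2}$ are either a pair of adjacent Carter generators or else arise as $\ss_n^{-1}$-conjugates of adjacent reflections handled by induction, so the braid $\braid{g(\rrefl_{n-1})}{g(\rrefl_{n-2})}$ and the commutations $[g(\rrefl_{n-1}),g(\rrefl_k)]=1$ for $k<n-2$ are obtained by arguments entirely parallel to Lemma~\ref{LemLiftRedDecEqt1}(1)--(7), with Carter non-adjacency handling most commutations and the conjugation trick delivering the rest.

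The exceptional case is Equation~\ref{Eqt11} with $i=n-1$: Convention~\ref{ConventionNotations} collapses the third cycle $(n,\overline{n})$ to the $1$-cycle $(\overline{n})$, so Step~$2$ is trivial and $\ss_n$ enters only through the initial pair $(1,n)(\overline{1},\overline{n})$. The reduced decomposition then reads
\[
w_0 \;=\; (1,n)(\overline{1},\overline{n})\cdot s_2 s_3\cdots s_m\cdot s_{m+2}s_{m+3}\cdots s_{n-1},
\]
so $\refl_{n-1}=s_{n-1}$ and $\refl_{n-2}=s_{n-2}$ are Carter generators and $\braid{\ss_{n-1}}{\ss_{n-2}}$ is an immediate Carter relation. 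The commutations of $\ss_{n-1}$ with the Carter generators $\ss_2,\ldots,\ss_m,\ss_{m+2},\ldots,\ss_{n-3}$ hold by non-adjacency in $\Delta$, and the remaining commutations $[\ss_{n-1},g((1,n))]=1$ and $[\ss_{n-1},g((\overline{1},\overline{n}))]=1$ reduce, after pushing $\ss_{n-1}$ past the Carter-commuting conjugating factors prescribed by Proposition~\ref{PropLiftReflections}, to identities in the three-strand braid subgroup $\langle \ss_{n-2},\ss_{n-1},\ss_n\rangle$ that follow from the inverse-braid identity $\ss_{n-1}\ss_{n-2}^{-1}\ss_{n-1}^{-1}=\ss_{n-2}^{-1}\ss_{n-1}^{-1}\ss_{n-2}$, the commutation $\ss_{n-2}\ss_n=\ss_n\ss_{n-2}$, and the braid $\ss_{n-1}\ss_n\ss_{n-1}=\ss_n\ss_{n-1}\ss_n$.

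The principal obstacle, and the content of the ``additional non-commuting relation'' mentioned in the statement, is the braid relation between $\ss_2$ and $g((1,n))$ (and symmetrically $g((\overline{1},\overline{n}))$), forced by the edge of $\Delta_0$ joining $s_2$ to $(1,n)$ but not accessible from the inductive hypothesis on $w'\in D_{n-1}$ because $(1,n)\notin P=\langle s_1,\ldots,s_{n-1}\rangle$. The plan is to verify this braid in $\GG$ by unwinding $g((1,n))=\ss_2^{-1}\ss_3^{-1}\cdots\ss_{n-1}^{-1}\ss_n\ss_{n-1}\cdots\ss_2$ and computing the conjugate $g((1,n))\,\ss_2\, g((1,n))^{-1}$: the commutations $\ss_n\ss_k=\ss_k\ss_n$ for $k\le n-2$, the commutations $\ss_k\ss_2=\ss_2\ss_k$ for $k\ge 4$, and the cancellations between $\ss_{n-1}^{-1}\cdots\ss_4^{-1}$ and $\ss_4\cdots\ss_{n-1}$ collapse the expression to a local identity inside $\langle \ss_2,\ss_3\rangle$ that follows from the Carter braid $\ss_2\ss_3\ss_2=\ss_3\ss_2\ss_3$ (conveniently rewritten as $\ss_3\ss_2\ss_3^{-1}=\ss_2^{-1}\ss_3\ss_2$). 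For $(\overline{1},\overline{n})$ the analogous computation uses the explicit expression from Equation~\ref{EqtLiftDecomI}; when $m=2$ the conjugating word contains $\ss_1$, and the twisted cycle commutator relator $[\ss_1,\ss_4^{\ss_3^{-1}\ss_2}]=1$ of $\GG$ is then invoked to control the $\ss_1$--$\ss_2$ interaction.
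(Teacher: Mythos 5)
Your overall plan coincides with the paper's: the paper's own proof of this lemma is a one-line deferral to the arguments of Lemmas~\ref{LemLiftRedDecEqt1}--\ref{LemLiftRedDecEqt45}, and your case analysis, your use of Proposition~\ref{PropLiftReflections} for the $g$-images, and your isolation of the exceptional case of Equation~\ref{Eqt11} with $i=n-1$ (where both $(1,n)$ and $(\overline{1},\overline{n})$ lie outside $P=\langle s_1,\dots,s_{n-1}\rangle$, so neither is reachable by the induction on $w'$) correctly identify where the genuinely new work lies. Your reading of the ``one additional non-commuting relation'' as the second of the two braid relations with $\ss_2$ is consistent with the way Proposition~\ref{PropLiftRedDecomp} invokes this lemma. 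In that respect your write-up is more informative than the paper's.

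Two concrete points need repair. First, your displayed formula for $g\left(\bs{(1,n)}\right)$ has the exponent signs reversed: Proposition~\ref{PropLiftReflections} (compare Example~\ref{ExampleLiftTransp}) gives $\bs{(1,n)}=\ss_n^{\ss_{n-1}^{-1}\cdots\ss_2^{-1}}=\ss_2\ss_3\cdots\ss_{n-1}\,\ss_n\,\ss_{n-1}^{-1}\cdots\ss_2^{-1}$, not $\ss_2^{-1}\cdots\ss_{n-1}^{-1}\ss_n\ss_{n-1}\cdots\ss_2$. The two expressions coincide in $W$ but not in $\GG$, and tracking exactly these signs is the entire content of these lemmas; your collapsing argument survives, but only after the correction. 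Relatedly, the conjugating word in Equation~\ref{EqtLiftDecomI} never contains $\ss_1$ ($\ss_1$ is the base of the conjugation), so the role you assign to the twisted cycle commutator for $m=2$ should be re-examined: for $m=2$ one has $\bs{(\overline{1},\overline{n})}=\ss_2\,\bs{(\overline{2},\overline{n})}\,\ss_2^{-1}$ with $\bs{(\overline{2},\overline{n})}$ conjugate to $\ss_1$ by a word in $\ss_4,\dots,\ss_n$ commuting with $\ss_2$, and the required braid reduces to the Carter relation $\braid{\ss_1}{\ss_2}$. Second, and more importantly, you omit the commutation $[g\left(\bs{(1,n)}\right),g\left(\bs{(\overline{1},\overline{n})}\right)]=1$, which is a relator implied by $\Delta_0$ (the two reflections are non-adjacent vertices of the decomposition diagram) and, like the two braids with $\ss_2$, is not covered by the induction because neither reflection lies in $P$. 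Unlike the commutations with $\ss_3,\dots,\ss_{n-1}$, this relation has no analogue in Lemmas~\ref{LemLiftRedDecEqt1}--\ref{LemLiftRedDecEqt45} (there is never more than one reflection involving the index $n$ in those cases), both of its entries involve $\ss_n$ nontrivially, and it is a natural place for the twisted cycle commutator relator to be genuinely needed; it must be verified explicitly for the proof to be complete.
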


\begin{proof}
	The argument of the proof is identical to the situation of Equations~\ref{Eqt1}--\ref{Eqt5} treated in Lemmas~\ref{LemLiftRedDecEqt1}--\ref{LemLiftRedDecEqt45}, which is appropriate to leave as an exercise.
\end{proof}

\blfootnote{Barbara Baumeister, Fakultät für Mathematik, Universität Bielefeld, Postfach 10 01 31, 33501 Bielefeld, Germany,
	E-mail: b.baumeister@math.uni-bielefeld.de,\\
	
	Georges Neaime, Fakultät für Mathematik, Universität Bielefeld, Postfach 10 01 31, 33615 Bielefeld, Germany,
	E-mail: gneaime@math.uni-bielefeld.de,\\
	
	Sarah Rees, School of Mathematics and Statistics, University of Newcastle, Newcastle NE1 7RU, UK,
    E-mail: sarah.rees@newcastle.ac.uk.}

\end{document}